\newtheorem{theorem}{Theorem}[section]
\newtheorem{lemma}[theorem]{Lemma}
\newtheorem{proposition}[theorem]{Proposition}
\newtheorem{corollary}[theorem]{Corollary}
\newtheorem{definition}[theorem]{Definition}
\newtheorem{example}[theorem]{\sc Example}
\newtheorem{remark}[theorem]{Remark}
\newcommand {\C}    {\mathbb{C}}
\newcommand {\D}    {\mathbb{D}}
\newcommand {\R}    {\mathbb{R}}
\renewcommand{\epsilon}{\varepsilon}
\begin{document}
\date{\today}
%\dedicatory{}

\subjclass[2010]{Primary:  47B35; Secondary: 81S10, 32M15 }
\keywords{(mean) oscillation, Berezin transform, semi-commutator, semi-classical limit}

\title[UC functions and quantization]{Uniform continuity and quantization \\ on bounded symmetric domains}

\author[W. Bauer]{W. Bauer}
\address{Institut für Analysis \endgraf
Welfengarten 1, 30167 Hannover, Germany \endgraf}
\email{bauer@math.uni-hannover.de}

\author[R. Hagger]{R. Hagger}
\address{Institut für Analysis \endgraf
Welfengarten 1, 30167 Hannover, Germany \endgraf}
\email{raffael.hagger@math.uni-hannover.de}

\author[N. Vasilevski]{N. Vasilevski}
\address{Departamento de Matem\'{a}ticas, CINVESTAV \endgraf
Apartado Postal 14-740, 07000, M\'{e}xico, D.F., M\'{e}xico \endgraf}
\email{nvasilev@math.cinvestav.mx}

\thanks{The first and third author acknowledge support through DFG (Deutsche Forschungsgemeinschaft), \\BA 3793/4-1.}

\begin{abstract}
We consider Toeplitz operators $T_f^{\lambda}$ with symbol $f$ acting on the standard weighted Bergman spaces over a bounded symmetric domain $\Omega\subset \mathbb{C}^n$. 
Here $\lambda > \mbox{\it genus}-1$ is the weight parameter. The classical asymptotic relation for the semi-commutator
\begin{equation}\tag{$*$}
\lim_{\lambda \rightarrow \infty} \big{\|}T_f^{\lambda} T_g^{\lambda} -T_{fg}^{\lambda} \big{\|}=0, \hspace{2ex} \mbox{\it with} \hspace{2ex} f,g \in C(\overline{\mathbb{B}^n}), 
\end{equation}
where $\Omega=\mathbb{B}^n$ denotes the complex unit ball, is extended to larger classes of bounded and unbounded operator symbol-functions and to more general domains. We deal with 
operator symbols that generically are neither continuous inside $\Omega$ (Section \ref{Section_VMO}) nor admit a continuous extension to the boundary (Section \ref{Section_uniformly_continuous} and \ref{Section_VMO}). 
Let $\beta$ denote the Bergman metric distance function on $\Omega$. We prove that ($*$) remains true for $f$ and $g$ in the space ${\rm UC}(\Omega)$ of all $\beta$-uniformly continuous 
functions on $\Omega$. Note that this space contains also unbounded functions. In case of the complex unit ball $\Omega=\mathbb{B}^n \subset \mathbb{C}^n$ we show that ($*$) 
holds true for bounded symbols in ${\rm VMO}(\mathbb{B}^n)$, where the vanishing oscillation inside $\mathbb{B}^n$ is measured with respect to $\beta$. At the same time ($*$) fails 
for generic bounded measurable symbols.  We construct a corresponding counterexample using oscillating symbols that are continuous outside of a single point in $\Omega$.
\end{abstract}
\maketitle
%%%%%%%%%%%%%%%%%%%%%%%%%%%%%%%%%%%%%%%%%%%%%%%%%%%%%%%%%%%%%%%%%%%%%%%%%
\section{Introduction}
\label{section_introduction}
\setcounter{equation}{0}
%%%%%%%%%%%%%%%%%%%%%%%%%%%%%%%%%%%%%%%%%%%%%%%%%%%%%%%%%%%%%%%%%%%%%%%%%%%
Let $\Omega\subset \mathbb{C}^n$ be a bounded symmetric domain (shortly BSD) and consider a (suitable) algebra of functions on $\Omega$. It is a classical scheme in 
{\it deformation quantization} to construct an associated family of non-commutative algebras $\mathcal{A}_{\lambda}$ that depend on a deformation parameter $\lambda$, and such that in the semi-classical 
limit (i.e.~when the Planck constant $\hbar \sim \frac{1}{\lambda}$ tends to zero)  $\mathcal{A}_{\lambda}$  should approach in some sense the above commutative algebra of functions, cf. \cite{Be1,Be}. 
A classical method for constructing a deformation quantization of symmetric spaces uses Toeplitz operators as quantum counterparts of the functions we start with. Such operators are  defined 
on the standard weighted Bergman spaces over $\Omega$, cf.  \cite{BC-1,Bro,BLU,E2,KL,Rie} and the weight parameter explicitly appears in the density function of the (Lebesgue) measure restricted 
to $\Omega$. Essential relations that one needs to prove (cf. \cite{Rie}) are the norm convergence ($*$) and (assuming some smoothness of the symbols) the second order asymptotic 
\begin{equation}\tag{$**$}
\big{\|} [T^{\lambda}_f, T_g^{\lambda}]-\frac{i}{\lambda} T_{\{f,g\}}^{\lambda} \big{\|}=O(\lambda^{-2}) \hspace{5ex} \mbox{\it as} \quad \lambda \rightarrow \infty. 
\end{equation}
Here $[\cdot , \cdot]$ denotes the commutator of operators and $\{ \cdot, \cdot\}$ is the Poisson bracket which is associated to a symplectic form induced by the Bergman metric tensor. 
\vspace{1mm}\par
Recall that a quantization via Toeplitz operators acting on the Bergman space was first introduced by F. Berezin \cite{Be,Be1,Be2} for the case of the unit disk $\mathbb{D}$ in the complex plane and, 
more generally, for BSDs $\Omega \subset \mathbb{C}^n$. An a bit different approach to quantization for the unit disc $\mathbb{D}$ has been considered by Klimek and Lesniewski 
in \cite{KL} and subsequently was generalized to arbitrary BSDs by Borthwick, Lesniewski and Upmeier in \cite{BLU}. Deformation estimates for Berezin-Toeplitz quantization on the Euclidean $n$-space 
$\Omega = \mathbb{C}^n$ equipped with a family of Gaussian measures were obtained in \cite{Bro,C0}. In this non-compact setting the proofs are based on the relation between Toeplitz operators and 
pseudo-differential operators in Weyl-quantization. In particular, the required norm estimates are a consequence of the Calderon-Vaillancourt theorem. For $\Omega$ being a compact Kähler 
manifold, the above asymptotic relations have been obtained by Bordemann, Meinrenken and Schlichenmaier \cite{BMS} (see also \cite{MM}). An analysis of the semi-classical limit for smoothly 
bounded strictly pseudoconvex domains in $\mathbb{C}^n$ can be found in \cite{E2}. We mention as well that a family of associative star products in deformation quantization can be constructed on the base of 
$(*)$ and $(**)$, cf. \cite{E2}. 
\vspace{1mm}\par 
The above mentioned results typically require certain regularity of the operator symbols and their controlled behavior close to the boundary of the domain (or at infinity). More precisely, in \cite[Theorem 2.2]{BLU} 
the relation $(*)$ is proved  assuming that $f$ and $g$ are bounded continuous functions and $g$ has compact support in $\Omega$.  In the special case of $\Omega = \mathbb{B}^n$ we may as well 
apply \cite[Theorem 3]{E2}, which assumes that $f$ and $g$ are smooth up to the boundary of $\Omega$.  If one is only interested in $(*)$, this assumption can be relaxed to $f,g \in C(\overline{\Omega})$ by a 
simple approximation argument. 
\vspace{1mm}\par 
The aim of the present paper is to extend $(*)$ for symbols $f,g$ in larger algebras of bounded (and unbounded) functions in BSDs $\Omega$. We show that $(*)$ holds true if $f$ and $g$ are bounded and 
uniformly continuous in $\Omega$ with respect to the Bergman metric distance $\beta$. Note that in general such functions do not extend continuously to the boundary $\partial \Omega$. Moreover, 
we can even drop the boundedness assumption and obtain $(*)$ for (unbounded) Toeplitz operators with $\beta$-uniformly continuous symbols (cf. Theorem 
\ref{Theorem_behaviour_semi_commutator_uniformly_bounded_function}). At the same time Example \ref{counterexample_fast_oscillating_symbols} shows that $(*)$ may fail if we drop the continuity 
assumption even in one single point inside $\Omega$. In the last section of the paper we deal only with the complex unit ball $\Omega = \mathbb{B}^n$. We emphasize that a controlled oscillation of 
bounded symbols $f$ and $g$ inside $\Omega$ implies $(*)$. To be precise, assuming that $f$ or $g$ belongs to the space $\textup{VMO}(\mathbb{B}^n)$ of bounded functions having 
vanishing oscillation with respect to $\beta$ is sufficient for $(*)$. Our proofs use a refinement of the norm estimates for  Hankel operators in \cite{BBCZ} and an asymptotic analysis of the $\textup{BMO}^{\lambda}$-semi-norms 
of $\beta$-uniformly continuous functions (cf. Proposition \ref{estimate_norm_Hankel_operator} and \ref{Lemma_limit_BMO_lambda_UC}).  We remark that by different methods similar (but slightly weaker) results 
for the Fock space case (i.e.~$\Omega= \mathbb{C}^n$ equipped with a family of Gaussian measures) have been obtained recently in \cite{BC-1}. 
\vspace{1mm}
\par 
One of our motivations for considering this problem stems from the representation theory of $C^*$-algebras generated by Toeplitz operators (cf. \cite{BV}). In fact, in this paper a family of irreducible 
representations has been constructed under certain assumptions which include $(*)$. We expect our analysis to be useful for the study of Toeplitz $C^*$-algebras with generating operators having 
symbols in (suitable) classes of functions that not necessarily admit continuous boundary values. Further details shall be presented in a forthcoming work. 
\vspace{1mm}
\par 
In Section \ref{section_preliminaries} we fix the notation and present some standard material on BSDs, Bergman spaces and Toeplitz operators. In particular, we show that finite products of (i.g.~unbounded) Toeplitz 
operators with $\beta$-uniformly continuous symbols are well-defined on a common dense domain. 
We start Section \ref{Section_uniformly_continuous} with some technical estimates and use them to derive a norm estimate for Hankel operators, which is needed in the proof of our main result (Theorem \ref{Theorem_behaviour_semi_commutator_uniformly_bounded_function}). The compactness of semi-commutators are discussed and we present the above mentioned counterexample. 
%\par 
Finally, in Section \ref{Section_VMO} we prove $(*)$ in the case of bounded symbols having vanishing oscillation inside $\Omega= \mathbb{B}^n$. 
%%%%%%%%%%%%%%%%%%%%%%%%%%%%%%%%%%%%%%%%%%%%%%%%%%%%%%%%%%%%%%%%%%%%%%%%%
\section{Preliminaries}
\label{section_preliminaries}
\setcounter{equation}{0}
%%%%%%%%%%%%%%%%%%%%%%%%%%%%%%%%%%%%%%%%%%%%%%%%%%%%%%%%%%%%%%%%%%%%%%%%%%%
%%%%%%%%%%%%%%%%%%%%%%%%%%%%%%%%%%%%%%%%%%%%%%%%%%%%%%%%%%%%%%%%%%%%%%%%%%%%%%%%%%%%%%%%%%%
Throughout the paper we consider a BSD $\Omega \subset \mathbb{C}^n$ in its Harish-Chandra realization \cite{BBCZ,Cart,E2,FK,T}. In particular, $\Omega$ contains the origin and is 
convex and circular. We write $G= \textup{Aut}_0(\Omega)$ for the connected component of the automorphism group of $\Omega$ which contains the identity. 
By $K$ we denote the (maximal) subgroup of $G$ that stabilizes the origin.

As is well-known, each $k \in K$ extends to a linear mapping on $\mathbb{C}^n$ \cite{Cart}. If $r$ denotes the {\it rank} of $\Omega$, then there is a set 
$\{ f_1, \cdots, f_r\} \subset \mathbb{C}^n$ ({\it Jordan frame}) of $\mathbb{R}$-linear independent vectors such that 
\begin{equation}\label{decomposition_Omega}
\Omega= \Big{\{} z \in \mathbb{C} \: : \: z=k \sum_{j=1}^r t_j f_j, \: k \in K, \; 1 > t_1 \geq t_2 \geq \cdots \geq t_r \geq 0 \Big{\}}. 
\end{equation}
The sum-representation of $z\in \Omega$ in (\ref{decomposition_Omega}) is called {\it polar decomposition} and, assuming the above ordering, the numbers $t_j$ are uniquely determined ($k$ is not i.g.). There is a polynomial 
({\it Jordan triple determinant})
\begin{equation*}
h: \mathbb{C}^n \times \mathbb{C}^n \rightarrow \mathbb{C}
\end{equation*}
holomorphic in $z$ and anti-holomorphic in $w$ which restricted to the diagonal fulfills
\begin{equation}\label{GL_expansion_on_the_diagonal}
h(z,z)= \prod_{j=1}^r \big{(} 1-t_j^2\big{)}. 
\end{equation}
Moreover, $h$ is conjugate symmetric, i.e.~$h(z,w)=\overline{h(w,z)}$ and invariant under the action of $K$, i.e.~for all $z,w \in \Omega$ and all $k \in K$ 
\begin{equation}\label{invariance_h}
h(kz,kw)= h(z,w).% \hspace{5ex} \forall \: k \in K. 
\end{equation}
\par 
Let $p$ be the {\it genus} of $\Omega$ (see \cite{E2} for the definition) and $\lambda > p-1$. Consider the following weighted measure on $\Omega$: 
\begin{equation*}
dv_{\lambda}(z)= c_{\lambda} h(z,z)^{\lambda-p} dv(z),
\end{equation*}
where $dv$ denotes the normed to one Lebesgue measure on $\Omega$ and $c_{\nu}>0$ is a normalizing constant such that $v_{\lambda}(\Omega) = 1$, i.e. $c_p=1$. An explicit expression of 
$c_{\lambda}$ can be found in \cite{FK}. 
\vspace{1ex}\par 
We write $\mathcal{A}_{\lambda}^2(\Omega)$ for the weighted Bergman space of holomorphic functions in $L^2(\Omega, dv_{\lambda})$. The norm and inner product on these spaces are denoted by $\|\cdot \|_{\lambda}$ 
and $\langle \cdot , \cdot \rangle_{\lambda}$, respectively. The following result is well-known \cite{E3,FK}: 
\begin{lemma}\label{Lemma_JTD}
The Bergman space $\mathcal{A}^2_{\lambda}(\Omega)$ is a reproducing kernel Hilbert space and the kernel can be expressed in terms of the Jordan 
triple determinant: 
\begin{equation}\label{Formula_weighted_Bergman_kernel}
K_{\lambda}(z,w)= h(z,w)^{-\lambda}, \hspace{4ex} \mbox{\it where} \hspace{4ex} (z,w) \in \Omega \times \Omega. 
\end{equation}
\end{lemma}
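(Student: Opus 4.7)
My plan is to split the argument into the reproducing kernel property (which is essentially formal) and the explicit identification of the kernel (which uses the transformation behavior of $\mathcal{A}^2_\lambda(\Omega)$ under $G$ together with the Jordan triple determinant).

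First I would verify that $\mathcal{A}^2_\lambda(\Omega)$ is a Hilbert space and admits a reproducing kernel. The weight $h(z,z)^{\lambda-p}$ is bounded below on every compact $E \Subset \Omega$ by some $c_E > 0$, because $h(z,z) = \prod_j (1-t_j^2)$ by \eqref{GL_expansion_on_the_diagonal} and the $t_j$ stay bounded away from $1$ on $E$. Combined with the sub-mean-value inequality for $|f|^2$ applied on small polydiscs inside $\Omega$, this yields a pointwise bound
\[
|f(z)|^2 \leq C_E \|f\|_\lambda^2 \qquad (z \in E, \; f \in \mathcal{A}^2_\lambda(\Omega)),
\]
which simultaneously shows that point evaluations are continuous and that the subspace of holomorphic functions is $L^2$-closed. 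The Riesz representation theorem then provides a reproducing kernel $K_\lambda(z,w)$.

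To identify $K_\lambda$ I would use the $G$-invariance of the setup. For every $\phi \in G$ one has the transformation formula
\[
h(\phi z, \phi z) = |J_{\mathbb C}\phi(z)|^{2/p}\, h(z,z),
\]
so the map $U_\phi^\lambda f(z) := J_{\mathbb C}\phi(z)^{\lambda/p} f(\phi z)$ (with an appropriate branch) is unitary on $\mathcal{A}^2_\lambda(\Omega)$. This forces the transformation law
\[
K_\lambda(z,w) = J_{\mathbb C}\phi(z)^{\lambda/p} \,\overline{J_{\mathbb C}\phi(w)}^{\lambda/p}\, K_\lambda(\phi z, \phi w).
\]
Restricting $\phi$ to the subgroup $K$ (where $J_{\mathbb C}\phi$ is a unimodular constant by linearity and volume preservation), together with \eqref{invariance_h}, shows that $K_\lambda(z,w)$ and $h(z,w)^{-\lambda}$ share the same invariance properties; normalizing at the origin, where $h(0,0) = 1$ and $K_\lambda(0,0)$ equals the reciprocal of the squared norm of the constant function $1$, which is $1$ by the choice of $c_\lambda$, I would conclude $K_\lambda(z,0) = 1 = h(z,0)^{-\lambda}$. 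Using a transitive family of $\phi \in G$ sending $0$ to an arbitrary $w \in \Omega$ and polarizing in $\bar w$ then upgrades this to $K_\lambda(z,w) = h(z,w)^{-\lambda}$ for all $(z,w)$.

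The main obstacle is the last step, namely controlling the cocycle $J_{\mathbb C}\phi^{\lambda/p}$ for non-integer $\lambda/p$ and matching it with $h(z,w)^{-\lambda}$. I would handle this by choosing $\phi = \phi_w$ in the standard family of Möbius-type automorphisms of $\Omega$, for which the explicit identity $h(\phi_w z, 0) = h(z, w)\, h(w, w)^{-1/2}\, (\text{unimodular})$ is available from the Jordan-theoretic machinery in \cite{FK}; this makes the branch issue transparent and yields the claimed closed form. An equivalent, perhaps cleaner, route is to expand $h(z,w)^{-\lambda}$ in the Faraut–Koranyi series and check termwise that it reproduces the $K$-isotypical components of $\mathcal{A}^2_\lambda(\Omega)$, but this requires importing more representation-theoretic input.
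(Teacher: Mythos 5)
The paper does not prove this lemma: the statement is introduced with ``The following result is well-known'' and is cited to \cite{E3,FK}, so there is no in-paper argument to compare against. Your sketch follows the classical route --- establish the reproducing kernel Hilbert space structure from a local $L^2$-to-sup estimate, then pin down the kernel from the cocycle transformation of $\mathcal{A}^2_\lambda(\Omega)$ under the transitive group $G$ together with the normalization $K_\lambda(\cdot,0)\equiv 1$. The first half is fine.

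In the second half there are three points to flag. (i) The identity you quote, $h(\varphi_w z,0)=h(z,w)\,h(w,w)^{-1/2}\cdot(\text{unimodular})$, cannot be correct: the Jordan triple determinant is normalized by $h(\cdot,0)\equiv 1$ (this is exactly what makes $K_\lambda(z,0)=h(z,0)^{-\lambda}=1$ work), so the left-hand side is identically $1$ while the right-hand side is not. What you actually need is the Jacobian cocycle relation $J_{\mathbb{C}}\varphi_w(z)^{1/p}=h(w,w)^{1/2}\,h(z,w)^{-1}\cdot(\text{unimodular})$, which follows from the change-of-variables formula $dv(\varphi_z(w))=\bigl(h(z,z)/|h(z,w)|^2\bigr)^p\,dv(w)$ used later in the paper; with it, $K_\lambda(\varphi_w z',w)=J\varphi_w(z')^{-\lambda/p}\,\overline{J\varphi_w(0)}^{-\lambda/p}=h(\varphi_w z',w)^{-\lambda}$ closes the computation. (ii) The $K$-invariance digression does not by itself narrow down the kernel; the argument is carried by transitivity via the involutions $\varphi_w$, and the intermediate fact $K_\lambda(\cdot,0)\equiv 1$ also needs the circularity of $\Omega$ (homogeneous polynomials of different degrees are orthogonal, so $K_\lambda(\cdot,0)$ is a constant, fixed to $1$ by $v_\lambda(\Omega)=1$). (iii) The branch problem you flag is genuine and not disposed of by the sketch: for non-integer $\lambda/p$ the powers $J\varphi(z)^{\lambda/p}$ and $h(z,w)^{-\lambda}$ must be given mutually compatible holomorphic branches (possible since $h$ is zero-free on $\Omega\times\Omega$, equal to $1$ on $\Omega\times\{0\}$, and $\Omega$ is simply connected, but it has to be said); also, the starting relation $h(\varphi z,\varphi z)=|J_{\mathbb{C}}\varphi(z)|^{2/p}h(z,z)$ is essentially the $\lambda=p$ case of the lemma and must be imported as a Jordan-theoretic fact rather than derived from the Bergman theory being built, or the argument is circular. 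The Faraut--Kor\'anyi series expansion you mention is indeed the cleanest rigorous route, as it handles branches, convergence, and the explicit kernel in one stroke.
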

We denote by $\beta_{\lambda}(\cdot, \cdot)$ the Bergman metric on $\Omega$ with respect to the weighted Bergman space $\mathcal{A}_{\lambda}^2(\Omega)$. 
More precisely, $\beta_{\lambda}$ is the metric distance function induced by the infinitesimal Bergman metric on $\Omega$ with metric tensor: 
\begin{equation*}
\big{(}g_{ij}^{\lambda}(z)\big{)}_{i,j}= \Big{(} \frac{\partial^2}{\partial z_i \partial \overline{z}_j} \log K_{\lambda}(z,z) \Big{)}_{i,j} \in \mathbb{C}^{n \times n}, 
\end{equation*}
where $K_{\lambda}$ denotes the reproducing kernel function as defined above. Then we have 
\begin{equation}\label{weighted_Bergman_metric}
\beta_{\lambda}(z,w)= \sqrt{\frac{\lambda}{p}} \beta(z,w), \hspace{3ex} \mbox{\it with the definition} \hspace{3ex}  \beta(z,w)=\beta_p(z,w). 
\end{equation}
%%%%%%%%%%%%%%%%%%%%%%%%%%%%%%%%%%%%%%%%%%%%%%%%%%%%%%%%%%%%%%%%%%%%%%%%%%%%%%%%%%%%%%%%%%%%%%%%%%%%%%%%
\subsection{Functions of bounded and vanishing oscillation}
With fixed $w \in \Omega$ consider the normalized reproducing kernel $k^{\lambda}_w \in \mathcal{A}_{\lambda}^2(\Omega)$
\begin{equation*}
k^{\lambda}_w(z):= K_{\lambda}(z,w) \|K_{\lambda}(\cdot, w)\|_{\lambda}^{-1}=h(z,w)^{-\lambda} h(w,w)^{\frac{\lambda}{2}}, \hspace{3ex} z \in \Omega. 
\end{equation*} 
The {\it Berezin transform} of $f \in L^1(\Omega, dv)$ (see \cite[Lemma 4.1]{BC1}) is the real analytic function on $\Omega$ defined by the integral transform 
\begin{equation}\label{Defintion_Berezin_Transform_of_a_function}
\mathcal{B}_{\lambda}(f)(z):= \int_{\Omega} f(w) |k_z^{\lambda}(w)|^2 dv_{\lambda}(w)
\end{equation}
(see (\ref{Berezin_transform_second_form}) for yet another representation of the Berezin transform). Recall that the {\it mean oscillation} of $f\in L^2(\Omega, dv)$ at $z \in \Omega$ is given by:
\begin{equation}\label{Definition_rem_mean_oscillation}
\textup{MO}^{\lambda}(f)(z):= \mathcal{B}_{\lambda}(|f|^2)(z)- |\mathcal{B}_{\lambda}(f)|^2(z)= \mathcal{B}_{\lambda} \Big{(} |f-\mathcal{B}_{\lambda}(f)(z)|^2\Big{)}(z) \geq 0.
\end{equation}
We consider the family of semi-norms
\begin{equation*}
\|f\|_{\textup{BMO}^{\lambda}}:= \sup \Big{\{} \sqrt{\textup{MO}^{\lambda}(f)(z)} \: : \: z \in \Omega \Big{\}}. 
\end{equation*}
The space of functions having {\it bounded $\lambda$-mean oscillation} is given by: 
\begin{equation*}
\textup{BMO}^{\lambda}(\Omega):= \Big{\{} f : \Omega \rightarrow \mathbb{C} \: : \: \|f\|_{\textup{BMO}^{\lambda}} < \infty \Big{\}}. 
\end{equation*}
\par 
In what follows we shortly write $\textup{BMO}(\Omega):= \textup{BMO}^p(\Omega)$ and $\textup{MO}(f):= \textup{MO}^p(f)$. 
Note that for all $\lambda > p-1$: 
$$L^{\infty}(\Omega) \subsetneq \textup{BMO}^{\lambda}(\Omega).$$ 
\par 
Let $C_0(\Omega)= \{ f \in C(\Omega) \: : \: \lim_{z \rightarrow \partial \Omega} f(z)=0\}$ be the space of all continuous functions vanishing  at the boundary $\partial \Omega$. 
\begin{definition}
A complex valued function $g$ on $\Omega$ is said to have {\it ''vanishing mean oscillation''} at $\partial \Omega$ if $\textup{MO}(g) \in C_0(\Omega)$. Put 
\begin{equation*}
\textup{VMO}_{\partial}(\Omega):= \Big{\{} g: \Omega \rightarrow \mathbb{C} \: : \: \textup{MO}(g) \in C_0(\Omega) \Big{\}}. 
\end{equation*}
\end{definition}
\par 
There is also the notion of bounded oscillation with respect to the weighted Bergman metric $\beta_{\lambda}$: 
\begin{definition}
Let $\lambda > p-1$. A continuous function $f$ is said to be of "bounded $\lambda$-oscillation" on $\Omega$ if
\begin{equation}\label{GL_local_estimate}
\|f\|_{\textup{BO}^{\lambda}}:= \sup\Big{\{} |f(z)-f(w)| \: : \: z,w \in \Omega, \; \; \beta_{\lambda}(z,w) \,  < \, 1 \Big{\}} < \infty. 
\end{equation}
Clearly, one has for $\lambda \geq \mu$: 
$$\|f \|_{\textup{BO}^{\lambda}} \leq \|f \|_{\textup{BO}^{\mu}}.$$
\par 
We say that the function $f$ has ''vanishing $\lambda$-oscillation at $\partial\Omega$'' if  $\textup{Osc}_z^{\lambda}(f) \in C_0(\Omega)$, where 
\begin{equation*}
\textup{Osc}_z^{\lambda}(f):= \sup \Big{\{} |f(z)-f(w)| \: : \: w \in \Omega, \; \; \beta_{\lambda}(z,w)\, <\, 1 \Big{\}}, \hspace{4ex} z \in \Omega,
\end{equation*}
denotes the $\lambda$-oscillation of $f$ in $z$. We write $\textup{BO}^{\lambda}(\Omega)$ and $\textup{VO}^{\lambda}_{\partial}(\Omega)$ for the functions 
having bounded and vanishing $\lambda$-oscillation, respectively. For $\lambda = p$ we omit the superscript $p$: 
\[\textup{BO}(\Omega):=\textup{BO}^p(\Omega) \hspace{3ex}\mbox{\it and } \hspace{3ex} \textup{VO}_{\partial}(\Omega):= \textup{VO}^p_{\partial}(\Omega).\]
\end{definition}
By choosing a geodesic curve between two points $z,w \in \Omega$ and using (\ref{GL_local_estimate}) we obtain a global estimate: 
\begin{lemma}\label{GL_estimate_global_BO}
Let $f \in \textup{BO}^{\lambda}(\Omega)$. Then for all $w,z \in \Omega$ we have
\begin{equation}\label{Global_estimate_BO}
| f(z)-f(w)| \leq \|f\|_{\textup{BO}^{\lambda}} \big{[}1+ \beta_{\lambda}(z,w) \big{]}.
\end{equation}
\end{lemma}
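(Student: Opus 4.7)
The strategy, as hinted in the sentence immediately preceding the statement, is to chain the local estimate \eqref{GL_local_estimate} along a minimizing geodesic connecting $z$ and $w$. Since $\Omega$ equipped with the Bergman metric $\beta_\lambda$ is a Hermitian symmetric space of non-compact type, it is a complete Cartan--Hadamard manifold, so for any pair $z,w \in \Omega$ there exists a geodesic $\gamma\colon [0,L]\to\Omega$, parametrized by arc length, with $\gamma(0)=z$, $\gamma(L)=w$, and $L=\beta_\lambda(z,w)$.

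First I would handle the trivial case: if $\beta_\lambda(z,w)<1$, then \eqref{GL_local_estimate} directly gives $|f(z)-f(w)|\leq \|f\|_{\textup{BO}^\lambda}\leq \|f\|_{\textup{BO}^\lambda}[1+\beta_\lambda(z,w)]$. Otherwise, choose an integer $N$ with $N-1 \leq L < N$ (so $N\leq L+1$), and set $t_j:=jL/N$ and $z_j:=\gamma(t_j)$ for $j=0,\ldots,N$, so that $z_0=z$, $z_N=w$. Because the geodesic realizes the distance, $\beta_\lambda(z_{j-1},z_j)=L/N<1$ for every $j$, so the local bound \eqref{GL_local_estimate} applies to each consecutive pair. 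The triangle inequality for $|f(\cdot)-f(\cdot)|$ then yields
\begin{equation*}
|f(z)-f(w)|\;\leq\;\sum_{j=1}^N |f(z_{j-1})-f(z_j)|\;\leq\; N\,\|f\|_{\textup{BO}^\lambda}\;\leq\;(1+\beta_\lambda(z,w))\,\|f\|_{\textup{BO}^\lambda},
\end{equation*}
which is the desired global estimate \eqref{Global_estimate_BO}.

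I do not anticipate any serious obstacle; the only point that requires minor care is the strict inequality $\beta_\lambda(z_{j-1},z_j)<1$ demanded by the definition of $\|f\|_{\textup{BO}^\lambda}$, which forces the choice $N>L$ (rather than $N\geq L$) and thus the factor $1+\beta_\lambda(z,w)$ rather than $\beta_\lambda(z,w)$ on the right-hand side. Everything else is standard: existence of minimizing geodesics in a complete Riemannian manifold of non-positive sectional curvature and the triangle inequality.
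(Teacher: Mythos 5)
Your argument is correct and is exactly the argument the paper uses: the paper's proof simply says to subdivide a geodesic and cites \cite[Lemma 8.2, p.~209]{Zhu_book_0}, which carries out the same chaining of the local bound along a minimizing geodesic. Your choice of $N$ with $N-1\leq L<N$ so that each piece has length $L/N<1$ and $N\leq 1+\beta_\lambda(z,w)$ is precisely the standard calculation.
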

\begin{proof}
In case of $\beta_{\lambda}(z,w) < 1$ we have (\ref{Global_estimate_BO}) by definition. Otherwise we divide the geodesic curve in pieces and use the same calculation as in 
\cite[Lemma 8.2, p. 209]{Zhu_book_0}. 
\end{proof}
%%%%%%%%%%%%%%%%%%%%%%%%%%%%%%%%%%%%%%%%%%%%%%%%%%%%%%%%%%%%%%%%%%%%%%%%%%%%%%%%%%%%%%%%%%
\subsection{Toeplitz and Hankel operators} 
\label{UC_products}
 %%%%%%%%%%%%%%%%%%%%%%%%%%%%%%%%%%%%%%%%%%%%%%%%%%%%%%%%%%%%%%%%%%%%%%%%%%%%%%%%%%%%%%%%%%
Our aim of this work is to analyze the asymptotic behavior of semi-commutators of Toeplitz operators when sending the weight parameter $\lambda>p-1$ to infinity. First we fix some basic notations. 
Consider the orthogonal projection 
\[P_{\lambda}: L^2(\Omega, dv_{\lambda}) \rightarrow \mathcal{A}_{\lambda}^2(\Omega).\]
Using Lemma \ref{Lemma_JTD}, one can write $P_{\lambda}$ explicitly as
\[(P_{\lambda}f)(z) = \int_{\Omega} f(w)h(z,w)^{-\lambda} \, dv_{\lambda}(w).\]
Given a symbol $f \in L^{\infty}(\Omega)$ we introduce the {\it Toeplitz operator} $T_f^{\lambda}$ and the {\it Hankel operator} $H_f^{\lambda}$ defined on $\mathcal{A}_{\lambda}^2(\Omega)$ by
\begin{align*}
T_f^{\lambda}:
&=P_{\lambda} M_f,\\
H_f^{\lambda}:
&= (I-P_{\lambda}) M_f. 
\end{align*}
Here $M_f$ denotes the pointwise multiplication by $f$. A straightforward calculation shows the standard relation
\begin{equation*}
T_f^{\lambda}T_g^{\lambda} -T_{fg}^{\lambda} = -(H_{\overline{f}}^{\lambda})^*H_g^{\lambda},
\end{equation*}
which implies the norm estimate 
\begin{equation}\label{Semi_commutator_estimate}
\|T_f^{\lambda}T_g^{\lambda} -T_{fg}^{\lambda}\|_{\lambda}\leq \| H_{\overline{f}}^{\lambda}\|_{\lambda} \| H_g^{\lambda}\|_{\lambda}. 
\end{equation} 
\par  We are also concerned with Toeplitz operators having symbols in the space $\textup{UC}(\Omega)$ of complex valued functions on $\Omega$ that are uniformly continuous 
with respect to the Bergman metric distance $\beta$. Since $\textup{UC}(\Omega)$ contains unbounded functions (e.g.~$f(z):= \beta(0,z)$) Toeplitz operators with uniformly continuous 
symbols are unbounded in general (cf. Remark \ref{remark_main_result_1} and \cite{BC0}). Hence we need to define finite products of such operators in a careful way by specifying a common invariant dense domain. 
\vspace{1mm}\par 

We recall the Forelli-Rudin estimates \cite[Proposition 8]{E3}: let the BSD $\Omega \subset \mathbb{C}^n$ be of {\it type} $(r,a,b)$ with {\it characteristic multiplicities} $a,b \in \mathbb{Z}_+$. Then we have: 

\begin{lemma}\label{Forelli_Rudin_estimates_standard}
Let $\alpha > p-1$ and $t > \frac{r-1}{2}a$, then there is a constant $C > 0$ (independent of $z \in \Omega$) such that for all $z \in \Omega$ 
\begin{equation*}
\int_{\Omega} h(w,w)^{\alpha-p}|h(z,w)|^{-(\alpha+t)} \, dv(w) \leq Ch(z,z)^{-t}. 
\end{equation*}
\end{lemma}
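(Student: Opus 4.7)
This is the classical Forelli–Rudin integral estimate on a bounded symmetric domain, and I would follow the well-known strategy of using M\"obius invariance to isolate the factor $h(z,z)^{-t}$ before estimating what remains.

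\emph{Step 1 (transformation formula).} Let $\varphi_z \in \textup{Aut}_0(\Omega)$ be the geodesic involution sending $0$ to $z$. The standard transformation rule for the Jordan triple determinant reads
\begin{equation*}
h(\varphi_z(u),\varphi_z(v)) \,=\, \frac{h(z,z)\, h(u,v)}{h(u,z)\, h(z,v)},
\end{equation*}
together with $h(0,\cdot) = h(\cdot,0) \equiv 1$ and the $\textup{Aut}_0(\Omega)$-invariance of the hyperbolic volume $h(w,w)^{-p}\, dv(w)$. Setting $v=0$ gives $h(\varphi_z(u),z) = h(z,z)/h(u,z)$.

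\emph{Step 2 (change of variables).} I would substitute $w = \varphi_z(u)$ in the integral. Using Step 1 one obtains
\begin{equation*}
h(w,w) \,=\, \frac{h(z,z)\,h(u,u)}{|h(u,z)|^2}, \qquad |h(z,w)| \,=\, \frac{h(z,z)}{|h(u,z)|}, \qquad dv(w) \,=\, \frac{h(z,z)^p}{|h(u,z)|^{2p}}\, dv(u).
\end{equation*}
A short power-count shows that the $h(z,z)$-exponents collect to exactly $-t$, while the $|h(u,z)|$-exponents collect to $t-\alpha$ and the $h(u,u)$-exponent to $\alpha-p$. Thus
\begin{equation*}
\int_{\Omega} h(w,w)^{\alpha-p}\, |h(z,w)|^{-(\alpha+t)}\, dv(w) \,=\, h(z,z)^{-t}\, J(z), \qquad J(z) := \int_{\Omega} h(u,u)^{\alpha-p}\, |h(u,z)|^{t-\alpha}\, dv(u),
\end{equation*}
and the task reduces to proving that $J(z)$ is bounded uniformly in $z \in \Omega$.

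\emph{Step 3 (bounding $J$).} If $t \geq \alpha$ the factor $|h(u,z)|^{t-\alpha}$ is uniformly bounded on $\Omega\times\Omega$, so $J(z) \leq C \int_{\Omega} h(u,u)^{\alpha-p}\,dv(u) = C/c_\alpha < \infty$ using the definition of the normalizing constant and $\alpha > p-1$. The substantive case is $t < \alpha$, where $|h(u,z)|^{t-\alpha}= |h(u,z)|^{-(\alpha-t)}$ is a singular weight. Since $h$ is $K$-invariant, $J$ is $K$-invariant in $z$, and by (\ref{decomposition_Omega}) one may assume $z = \sum_{j=1}^r s_j f_j$ lies on the flat. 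Applying the $K$-invariant polar integration formula on $\Omega$, $J(z)$ rewrites as an iterated integral over $[0,1)^r$ against the Jordan-theoretic weight $\prod_{i<j}(t_i^2-t_j^2)^a\prod_j t_j^{2b+1}$, with $h(u,u) = \prod_j(1-t_j^2)$ and a factorized expression for $h(u,z)$ on the flat. The convergence of the resulting multidimensional radial integral, uniformly in $(s_1,\dots,s_r)$, is controlled by the exponent of the Vandermonde-type factor $\prod_{i<j}(t_i^2-t_j^2)^a$, and the hypothesis $t > \tfrac{r-1}{2}a$ is precisely what makes this uniform convergence possible.

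\emph{Main obstacle.} Steps 1 and 2 are essentially formal once the M\"obius transformation rule for $h$ is in hand. The real work lies in Step 3 when $t < \alpha$: one must carry out the polar reduction, keep track of the explicit dependence of $h(u,z)$ on the $t_j$ and $s_j$, and verify that the sharp constraint $t > \tfrac{r-1}{2}a$ arising from the Vandermonde factor is exactly what secures uniform convergence. An alternative route would be to expand $h(u,z)^{-(\alpha-t)}$ in the Faraut–Kor\'anyi series of spherical polynomials $K_{\mathbf m}$ and integrate term by term, which trades the radial analysis for combinatorial control of the generalized Pochhammer symbols.
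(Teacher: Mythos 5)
The paper does not actually prove this lemma: it is recalled verbatim from Engli\v s, cited as \cite[Proposition 8]{E3}, and used as a black box. So there is no "paper proof" to compare against, and what you have written is an independent sketch of a proof of the Forelli--Rudin estimate from first principles.

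Your Step~1 and Step~2 are correct and match the transformation rules the paper itself uses later (in the proof of Proposition~\ref{estimate_norm_Hankel_operator}): the identity $h(\varphi_z(u),\varphi_z(v))=h(z,z)h(u,v)/\bigl(h(u,z)h(z,v)\bigr)$ together with $dv(\varphi_z(u))=\bigl(h(z,z)/|h(u,z)|^2\bigr)^p dv(u)$ does isolate the factor $h(z,z)^{-t}$, and your power count is right. The $t\ge\alpha$ branch is then complete and rigorous. The genuine gap is in the $t<\alpha$ branch of Step~3. You claim that after restricting $z$ to the flat one can apply "the $K$-invariant polar integration formula" to rewrite $J(z)=\int_\Omega h(u,u)^{\alpha-p}|h(u,z)|^{t-\alpha}\,dv(u)$ as an $r$-dimensional integral over $[0,1)^r$ with a "factorized expression for $h(u,z)$ on the flat." That step does not go through as stated: although $h(u,u)$ and $dv(u)$ are $K$-invariant in $u$, the factor $|h(u,z)|^{t-\alpha}$ is not, so the polar formula does not collapse the $u$-integral to the flat, and there is no pointwise factorization of $h(u,z)$ in terms of the singular values of $u$ for a fixed $z$ on the flat. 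To make the bound rigorous one really does need the route you mention only in passing: expand $h(u,z)^{-(\alpha-t)/2}$ (and its conjugate) in the Faraut--Kor\'anyi series of spherical polynomials $K_{\mathbf m}$, integrate the $K$-average using orthogonality, and then verify convergence of the resulting series of generalized Pochhammer ratios --- which is where the hypothesis $t>\tfrac{r-1}{2}a$ enters. As the sketch stands, the decisive inequality is asserted but not proved, and the asserted mechanism (naive polar reduction) is incorrect; since the paper simply cites \cite{E3}, following that reference is also a perfectly acceptable resolution.
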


Let $\rho > 0$ and consider the following function spaces: 
\begin{equation*}
S_{\rho}(\Omega):= \Big{\{} f \in C(\Omega) \: : \: \mbox{\it  $\exists \: C > 0$ s.t.~$|f(z)| \leq Ch(z,z)^{-\rho}$ for all $z \in \Omega$} \Big{\}}. 
\end{equation*}
Moreover, define $\mathcal{H}_{\rho}(\Omega): = S_{\rho}(\Omega) \cap \mathcal{A}^2_{\lambda}(\Omega)$ and consider the intersections
\begin{align*}
\textup{Sym}(\Omega) := \bigcap_{\rho >0} S_{\rho}(\Omega)
\hspace{3ex} \mbox{\it and} \hspace{3ex} 
\mathcal{D}:=\bigcap_{\rho > \rho^*} \mathcal{H}_{\rho}(\Omega), 
\end{align*}
where $\rho^* := \frac{r-1}{2}a$. Note that $\textup{Sym}(\Omega)$ is actually an algebra. Since $\mathcal{D}$ contains the restrictions of all holomorphic polynomials to $\Omega$ it is a dense subspace in all Bergman spaces $\mathcal{A}_{\lambda}^2(\Omega)$.

\begin{lemma}\label{Lemma_Abbildungseigenschaften_TO}
Let $\lambda > \rho^*+p-1$ and assume that $f \in \textup{Sym}(\Omega)$. Then the (possibly unbounded) Toeplitz operator $T^{\lambda}_f$ leaves the space 
$\mathcal{D}\subset \mathcal{A}_{\lambda}^2(\Omega)$ invariant. In particular, all finite products 
\begin{equation*}
T^{\lambda}_{f_1} T^{\lambda}_{f_2} \cdots T^{\lambda}_{f_m}: \mathcal{D} \longrightarrow \mathcal{D}
\end{equation*}
with symbols $f_j \in \textup{Sym}(\Omega)$ are defined and induce densely defined operators on $\mathcal{A}_{\lambda}^2(\Omega)$. 
\end{lemma}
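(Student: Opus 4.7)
My plan is to interpret $T_f^\lambda \phi$ through its integral kernel:
\[(T_f^\lambda \phi)(z) = \int_\Omega f(w)\phi(w)\, h(z,w)^{-\lambda}\, dv_\lambda(w),\]
and to show, for every $\phi \in \mathcal{D}$ and $f \in \textup{Sym}(\Omega)$, that this integral converges absolutely, represents a holomorphic function in $z$, and lies in every $\mathcal{H}_\rho$ with $\rho > \rho^*$. Iterating the resulting invariance $T_f^\lambda(\mathcal{D}) \subset \mathcal{D}$ will then handle finite products, while density of $\mathcal{D}$ in $\mathcal{A}_\lambda^2(\Omega)$ follows from the fact that every holomorphic polynomial is bounded and therefore lies in $\mathcal{D}$.

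The heart of the argument is a direct application of Forelli--Rudin (Lemma \ref{Forelli_Rudin_estimates_standard}) to the pointwise integrand. For any $\sigma > 0$ and any $\tau > \rho^*$, the definitions of $\textup{Sym}(\Omega)$ and $\mathcal{D}$ give
\[|f(w)\phi(w)| \leq C\, h(w,w)^{-(\sigma+\tau)},\qquad w\in\Omega,\]
with $C = C(\sigma,\tau,f,\phi)$. Substituting $dv_\lambda(w) = c_\lambda h(w,w)^{\lambda-p}\, dv(w)$ and applying Lemma \ref{Forelli_Rudin_estimates_standard} with $\alpha = \lambda - \sigma - \tau$ and $t = \sigma + \tau$, the required conditions $\alpha > p-1$ and $t > \rho^*$ become
\[\rho^* \;<\; \sigma + \tau \;<\; \lambda - p + 1,\]
and this range is nonempty precisely thanks to the assumption $\lambda > \rho^* + p - 1$. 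One concludes
\[|(T_f^\lambda \phi)(z)| \leq C'\, h(z,z)^{-(\sigma+\tau)}.\]
Since $h(z,z) \in (0,1]$, the bound is monotonic in the exponent, so one obtains $|(T_f^\lambda \phi)(z)| \leq C_\rho\, h(z,z)^{-\rho}$ for every $\rho > \rho^*$, giving $T_f^\lambda\phi \in S_\rho$. Holomorphy in $z$ is obtained by differentiating under the integral sign, justified by the same Forelli--Rudin estimates applied uniformly on compact subsets of $\Omega$.

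To complete the proof of $T_f^\lambda\phi \in \mathcal{D}$ it remains to verify $T_f^\lambda\phi \in \mathcal{A}_\lambda^2(\Omega)$. The natural route is to use that $P_\lambda$ is an orthogonal projection on $L^2(\Omega, dv_\lambda)$, so that it suffices to have $f\phi \in L^2(\Omega,dv_\lambda)$. With the same growth bounds, this reduces to integrability of $h(w,w)^{\lambda - p - 2(\sigma+\tau)}$ over $\Omega$, which holds once $\sigma+\tau$ is pinned close enough to its lower threshold by taking $\sigma$ near $0$ and $\tau$ near $\rho^*$. This $L^2$ verification is the main technical obstacle of the proof: the Forelli--Rudin thresholds and the integrability threshold leave only a narrow window for the parameters, so the choice must be made carefully. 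Once this is done, the invariance $T_f^\lambda(\mathcal{D})\subset\mathcal{D}$ is established, finite products $T_{f_1}^\lambda\cdots T_{f_m}^\lambda$ are well-defined on $\mathcal{D}$ by induction, and density of $\mathcal{D}$ in $\mathcal{A}_\lambda^2(\Omega)$ implies that each such product induces a densely defined operator.
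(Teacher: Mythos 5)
Your approach is the same as the paper's: the pointwise growth estimate $|(T_f^\lambda\phi)(z)| \leq C\,h(z,z)^{-(\sigma+\tau)}$ is obtained exactly as in the paper's proof via Forelli--Rudin (Lemma \ref{Forelli_Rudin_estimates_standard}) with $\alpha = \lambda - \sigma - \tau$ and $t = \sigma + \tau$, subject to $\rho^* < \sigma+\tau < \lambda - p + 1$; iterating this invariance and observing that polynomials lie in $\mathcal{D}$ yields the statement about finite products and density, again as in the paper.

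Where you go beyond the paper is in explicitly trying to verify $T_f^\lambda\phi \in \mathcal{A}_\lambda^2(\Omega)$, which is indeed part of what membership in $\mathcal{D} = \bigcap_{\rho>\rho^*}\bigl(S_\rho\cap\mathcal{A}_\lambda^2(\Omega)\bigr)$ requires and which the paper's proof leaves implicit. Here, however, your claim that the parameter window is nonempty does not follow from the stated hypothesis. You want $f\phi \in L^2(\Omega,dv_\lambda)$, and with the bound $|f\phi| \leq C\,h(\cdot,\cdot)^{-(\sigma+\tau)}$ this reduces to $\int_\Omega h(w,w)^{\lambda-p-2(\sigma+\tau)}\,dv(w) < \infty$, i.e. $\sigma+\tau < (\lambda-p+1)/2$. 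Combining this with the Forelli--Rudin floor $\sigma+\tau > \rho^*$, the window is nonempty precisely when $\rho^* < (\lambda-p+1)/2$, i.e. $\lambda > 2\rho^*+p-1$. This is strictly stronger than the stated $\lambda > \rho^*+p-1$ whenever the rank of $\Omega$ exceeds one (so that $\rho^* > 0$); ``pinning $\sigma+\tau$ near $\rho^*$'' therefore does not, by itself, land you below the integrability threshold. Note that the same issue is present, silently, in the paper's own proof, so you have put your finger on a genuine subtlety; but to make the $L^2$ verification as you've sketched it go through, one must either strengthen the hypothesis to $\lambda > 2\rho^*+p-1$ or supply a different argument for $\mathcal{A}_\lambda^2$-membership of $T_f^\lambda\phi$.
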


\begin{proof}
Let $\rho \in (\rho^*, \lambda+1-p)$ and let $\epsilon > 0$ be sufficiently small such that $\alpha := -\rho-\epsilon+\lambda > p-1$. Given $g \in \mathcal{D}$ and  $f \in \textup{Sym}(\Omega)$ we can 
choose $C_{\rho}, c_{\epsilon} >0$ such that 
\begin{equation*}
|f(z)| \leq c_{\epsilon}h(z,z)^{-\epsilon} \hspace{2ex} \mbox{\it and } \hspace{2ex} |g(z)| \leq C_{\rho}h(z,z)^{-\rho}.
\end{equation*}
The Forelli-Rudin estimates in Lemma \ref{Forelli_Rudin_estimates_standard} imply then: 
\begin{align*}
\big{|}\big{[}T^{\lambda}_fg\big{]}(z) \big{|} 
&= \Big{|} \int_{\Omega} f(w)g(w)h(z,w)^{-\lambda} \, dv_{\lambda}(w) \Big{|}\\
&\leq C_{\rho} c_{\epsilon} \int_{\Omega} h(w,w)^{-\rho-\epsilon+\lambda-p}|h(z,w)|^{-\lambda} \, dv(w)\\
&\leq C_{\rho} c_{\epsilon} \int_{\Omega} h(w,w)^{\alpha-p}|h(z,w)|^{-(\alpha+\rho+\epsilon)} \, dv(w)\\
&\leq \widetilde{C}C_{\rho}c_{\epsilon}h(z,z)^{-(\rho+\epsilon)}. 
\end{align*}
Since $\rho > \rho^*$ and $\epsilon > 0$ can be chosen arbitrarily small, we conclude that $T_fg \in \mathcal{D}$. 
\end{proof}
Let $f \in \textup{BO}^{\lambda}(\Omega)$, then it follows from \eqref{Global_estimate_BO} with $z = 0$ that 
\begin{equation*}
|f(w)| \leq |f(0)| +|f(0)-f(w)| \leq |f(0)| + \|f \|_{\textup{BO}^{\lambda}} \big{(} 1+ \beta_{\lambda}(0,w)\big{)}.  
\end{equation*}
Corollary \ref{Corollary_Schur_test_hilf_1} below implies that for any $\rho > 0$ there is $C(\rho,f) > 0$ such that 
\begin{equation*}
|f(w)| \leq C(\rho, f)h(w,w)^{-\rho}, \hspace{4ex} w \in \Omega,
\end{equation*} 
and therefore one obtains the inclusions
\begin{equation}\label{Inclusion_UC_BO}
\textup{UC}(\Omega) \subset \textup{BO}^{\lambda}(\Omega) \subset \textup{Sym}(\Omega).
\end{equation}
\par 
Let $\mathcal{A}_{\textup{uc}}(\Omega)$ denote the (non-closed) subalgebra in $\textup{Sym}(\Omega)$ which is generated by functions in $\textup{UC}(\Omega)$, i.e.~$\mathcal{A}_{\textup{uc}}(\Omega)$ 
consists of finite sums of finite products of functions in $\textup{UC}(\Omega)$.  Then we have:
\begin{lemma}
Toeplitz operators with symbols $f \in \mathcal{A}_{\textup{uc}}(\Omega)$ leave $\mathcal{D}$ invariant. In particular, finite products of such operators with dense domain $\mathcal{D}$ are well defined.
\end{lemma}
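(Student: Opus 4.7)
The plan is to reduce the statement directly to Lemma \ref{Lemma_Abbildungseigenschaften_TO}. The key observation is that the domain $\mathcal{D}$ has already been shown to be invariant under Toeplitz operators with symbols in $\textup{Sym}(\Omega)$, and finite products of such operators are well defined on $\mathcal{D}$. So I only need to verify the inclusion $\mathcal{A}_{\textup{uc}}(\Omega) \subset \textup{Sym}(\Omega)$.

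First I would recall that the left inclusion in \eqref{Inclusion_UC_BO} gives $\textup{UC}(\Omega) \subset \textup{Sym}(\Omega)$. Since $\textup{Sym}(\Omega)$ has been noted above to be an algebra (pointwise products and sums stay in $\textup{Sym}(\Omega)$ because the class is closed under taking products of upper bounds of the form $h(z,z)^{-\rho}$ for arbitrary $\rho > 0$), any finite sum of finite products of elements of $\textup{UC}(\Omega)$ still lies in $\textup{Sym}(\Omega)$. By definition this means $\mathcal{A}_{\textup{uc}}(\Omega) \subset \textup{Sym}(\Omega)$.

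Now, given $f \in \mathcal{A}_{\textup{uc}}(\Omega)$ and $\lambda > \rho^* + p - 1$, Lemma \ref{Lemma_Abbildungseigenschaften_TO} applies verbatim and yields $T_f^{\lambda} \mathcal{D} \subset \mathcal{D}$. Consequently, for any finite family $f_1, \ldots, f_m \in \mathcal{A}_{\textup{uc}}(\Omega)$, the composition $T^{\lambda}_{f_1} T^{\lambda}_{f_2} \cdots T^{\lambda}_{f_m}$ is unambiguously defined on $\mathcal{D}$ and maps $\mathcal{D}$ into itself, hence defines a densely defined operator on $\mathcal{A}_{\lambda}^2(\Omega)$.

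There is no serious obstacle here; the lemma is essentially a bookkeeping corollary of Lemma \ref{Lemma_Abbildungseigenschaften_TO} together with the fact that $\textup{Sym}(\Omega)$ is an algebra. The only small point worth spelling out in a full write-up is the algebra property of $\textup{Sym}(\Omega)$ (if $|f_i(z)| \leq C_i h(z,z)^{-\rho_i}$ then $|f_1 f_2(z)| \leq C_1 C_2 h(z,z)^{-(\rho_1+\rho_2)}$), but since this was already stated in the paragraph preceding Lemma \ref{Lemma_Abbildungseigenschaften_TO}, no additional verification is required.
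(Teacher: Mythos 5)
Your proof is correct and follows exactly the route the paper intends: the paper states the lemma without proof precisely because it is an immediate consequence of the inclusion $\textup{UC}(\Omega) \subset \textup{Sym}(\Omega)$ from \eqref{Inclusion_UC_BO}, the fact that $\textup{Sym}(\Omega)$ is an algebra, and Lemma~\ref{Lemma_Abbildungseigenschaften_TO}. Your write-up simply spells out these three ingredients, which is exactly the argument the authors leave implicit.
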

%%%%%%%%%%%%%%%%%%%%%%%%%%%%%%%%%%%%%%%%%%%%%%%%%%%%%%%%%%%%%%%%%%%%%%%%%%%%%%%%%%%%%%%%%%%%%%%%%%%%%%%%
\section{Uniformly continuous functions and quantization}
\label{Section_uniformly_continuous}
\setcounter{equation}{0}
%%%%%%%%%%%%%%%%%%%%%%%%%%%%%%%%%%%%%%%%%%%%%%%%%%%%%%%%%%%%%%%%%%%%%%%%%%%%%%%%%%%%%%%%%%%%%%%%%%%%%%%
In the present section we study the asymptotic behavior of semi-commutators of Toeplitz operators with symbols in $f \in \textup{UC}(\Omega)$ (cf.~Theorem \ref{Theorem_behaviour_semi_commutator_uniformly_bounded_function}). Although each single 
Toeplitz operator $T_f^{\lambda}$ may be unbounded it follows from the inclusions (\ref{Inclusion_UC_BO}) together with the results in \cite{{BBCZ}} that the semi-commutators $T_g^{\lambda}T_f^{\lambda}-T_{fg}^{\lambda}$, 
$f,g \in \textup{UC}(\Omega)$ are bounded operators. We start with some preparations (Lemma \ref{BSD_auxiliary_lemma}, \ref{BSD_auxiliary_lemma2} and {Corollary}  \ref{Corollary_Schur_test_hilf_1}), which 
give auxiliary inequalities that are essential in the proof of  Proposition \ref{estimate_norm_Hankel_operator} devoted to the norm estimate of Hankel operators. 

\begin{lemma}\label{BSD_auxiliary_lemma}
Let $s,C_1,C_2 > 0$, $U \subset \C^n$ and let $f,g \colon U \to \R_+$ satisfy $g(z) \geq 1$ and $f(z) \leq C_1g(z)$ for all $z \in U$. Further assume that there exists a set $V \subset U$ 
such that 
\begin{equation*}
\begin{cases}
f(z) \leq C_2\sqrt{\log g(z)} & \text{for all} \;  z \in V, \\
g(z) \geq 1 + s& \text{\it for all} \;   z \in U \setminus V. 
\end{cases}
\end{equation*}
%f(z) \leq C_2\sqrt{\log g(z)}$ for all $z \in V$ and $g(z) \geq 1 + s$ for all $z \in U \setminus V$. 
%\vspace{1mm} \par 
Then there exists a constant $C^{\prime}> 0$ such that $\sqrt{\lambda}f(z) \leq C^{\prime}g(z)^{\lambda}$ for all $z \in U$ and $\lambda \geq 1$.
\end{lemma}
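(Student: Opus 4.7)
The plan is to split $U = V \cup (U \setminus V)$ and bound $\sqrt{\lambda}\, f(z)$ by $C'\, g(z)^{\lambda}$ on each piece separately, then take $C'$ to be the maximum of the two constants. The underlying idea is that on $V$ the sublogarithmic bound $f \leq C_2 \sqrt{\log g}$ lets the factor $\sqrt{\lambda}$ be absorbed into the exponential $g^{\lambda} = e^{\lambda \log g}$, while on $U \setminus V$ the crude linear bound $f \leq C_1 g$ is enough because $g^{\lambda-1} \geq (1+s)^{\lambda-1}$ grows exponentially in $\lambda$ and dominates $\sqrt{\lambda}$.

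On $V$: setting $u := \lambda \log g(z)$, which is nonnegative since $g \geq 1$ and $\lambda \geq 1$, one has
\begin{equation*}
\sqrt{\lambda}\, f(z) \,\leq\, C_2 \sqrt{\lambda \log g(z)} \,=\, C_2 \sqrt{u}
\qquad \text{and} \qquad g(z)^{\lambda} \,=\, e^{u},
\end{equation*}
so the desired inequality reduces to $C_2 \sqrt{u} \leq C'\, e^{u}$ for all $u \geq 0$. Since $u \mapsto \sqrt{u}\, e^{-u}$ is bounded on $[0,\infty)$ (its maximum is attained at $u = 1/2$), this holds as soon as $C' \geq C_2 \cdot \max_{u \geq 0} \sqrt{u}\, e^{-u}$.

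On $U \setminus V$: the assumption $g(z) \geq 1+s$ gives $g(z)^{\lambda - 1} \geq (1+s)^{\lambda - 1}$, hence
\begin{equation*}
\sqrt{\lambda}\, f(z) \,\leq\, C_1 \sqrt{\lambda}\, g(z) \,=\, C_1\, \frac{\sqrt{\lambda}}{g(z)^{\lambda-1}}\, g(z)^{\lambda} \,\leq\, C_1\, \frac{\sqrt{\lambda}}{(1+s)^{\lambda-1}}\, g(z)^{\lambda}.
\end{equation*}
The supremum $M_s := \sup_{\lambda \geq 1} \sqrt{\lambda}\,(1+s)^{-(\lambda-1)}$ is finite (the quantity equals $1$ at $\lambda = 1$ and decays exponentially as $\lambda \to \infty$), so this piece is dominated by $C_1 M_s\, g(z)^{\lambda}$. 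Taking $C' := \max\{C_2 \cdot \max_{u \geq 0} \sqrt{u}\, e^{-u},\; C_1 M_s\}$ completes the argument. I do not anticipate any genuine obstacle: the proof is an elementary two-case bookkeeping exercise, the only content being the fact $\sqrt{u} \lesssim e^{u}$ on $[0,\infty)$ together with the exponential dominance of $g^{\lambda}$ over $\sqrt{\lambda}$ whenever $g$ is bounded away from $1$.
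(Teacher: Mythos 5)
Your proof is correct and follows essentially the same two-case split as the paper: on $V$ you absorb $\sqrt{\lambda}$ into $g^\lambda = e^{\lambda\log g}$ using the boundedness of $\sqrt{u}\,e^{-u}$, and on $U\setminus V$ you use that $g^{\lambda-1}\geq(1+s)^{\lambda-1}$ dominates $\sqrt{\lambda}$. The paper phrases the first case via the elementary inequality $2^y\geq\sqrt{y}$ and handles $g(z)=1$ as a separate (trivial) subcase, but these are cosmetic differences; your substitution $u=\lambda\log g(z)$ makes the same point more directly, and the $g(z)=1$ case is in fact subsumed by your estimate since it forces $f(z)=0$.
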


\begin{proof}
Fix $z \in V$ and $\lambda \geq 1$. If $g(z) = 1$, then $f(z) = 0$ and thus obviously $\sqrt{\lambda}f(z) \leq C^{\prime}g(z)^{\lambda}$ for all $\lambda \geq 1$ and any $C' > 0$. 
So assume that $g(z) > 1$ and set $C^{\prime}_V := \sqrt{\log 2}C_2$. Then
\[C^{\prime}_Vg(z)^{\lambda} = C^{\prime}_V2^{\frac{\lambda\log g(z)}{\log 2}} \geq \sqrt{\frac{\log 2}{\log g(z)}}f(z)\sqrt{\frac{\lambda\log g(z)}{\log 2}} = \sqrt{\lambda}f(z)\]
since $2^y \geq \sqrt{y}$ for all $y \geq 0$.

For $z \in U \setminus V$ we choose $C^{\prime}_{V^c} := C_1\frac{1+s}{\sqrt{2\log(1+s)}}$ so that
\[C^{\prime}_{V^c}g(z)^{\lambda} \geq C_1\frac{(1+s)^{\lambda}}{\sqrt{2\log(1+s)}}g(z) \geq C_1\sqrt{\lambda}g(z) \geq \sqrt{\lambda}f(z),\]
where we used $\frac{(1+s)^y}{\sqrt{2\log(1+s)}} \geq \sqrt{y}$ for all $y \geq 0$ and $s > 0$. Choosing $C' := \max\{C^{\prime}_V,C^{\prime}_{V^c}\}$ finishes the proof.
\end{proof}

\begin{lemma}\label{BSD_auxiliary_lemma2}
Let $\Omega \subset \C^n$ be a BSD and $\rho > 0$. Then there is a neighborhood $V$ of $0$ and a constant $C(\rho) > 0$ such that
\[\beta(0,z) \leq C(\rho)\sqrt{\log h(z,z)^{-\rho}}\]
for all $z \in V$.
\end{lemma}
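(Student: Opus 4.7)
My plan is to exploit the $K$-invariance of both $h$ and $\beta$ to reduce to the polar decomposition (\ref{decomposition_Omega}), and then compare the Bergman distance near the origin with the Euclidean norm.

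First, both sides of the desired inequality are $K$-invariant in $z$: by (\ref{invariance_h}) we have $h(kz,kz)=h(z,z)$, and since $K\subset G$ fixes the origin and the Bergman metric is $G$-invariant, $\beta(0,kz)=\beta(k\cdot 0,kz)=\beta(0,z)$. Hence by (\ref{decomposition_Omega}) it suffices to prove the estimate when $z=\sum_{j=1}^r t_j f_j$ with $0\le t_r\le\cdots\le t_1<1$. I would choose the neighborhood $V$ of $0$ small enough to force $t_j\le \tfrac{1}{2}$ for every $j$.

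Next, the metric tensor $(g_{ij}^p(0))_{i,j}$ is smooth and positive definite, so on a small Euclidean ball around the origin the Bergman length of the straight segment from $0$ to $z$ is bounded by a constant multiple of $\|z\|$, yielding $\beta(0,z)\le c_1\|z\|$ for $z\in V$ (after possibly shrinking $V$). For $z=\sum_j t_j f_j$, the triangle inequality and $\R$-linear independence of the frame give $\|z\|\le c_2\bigl(\sum_j t_j^2\bigr)^{1/2}$, where $c_2$ depends only on the fixed Jordan frame. Combining the two yields $\beta(0,z)^2\le c_1^2c_2^2\sum_j t_j^2$.

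Finally, using (\ref{GL_expansion_on_the_diagonal}) together with the elementary bound $-\log(1-x)\ge x$ for $x\in[0,1)$ applied to $x=t_j^2$,
$$\log h(z,z)^{-\rho}=\rho\sum_{j=1}^r\bigl(-\log(1-t_j^2)\bigr)\ge \rho\sum_{j=1}^r t_j^2\ge \frac{\rho}{c_1^2c_2^2}\,\beta(0,z)^2.$$
Taking square roots gives the claim with $C(\rho)=c_1c_2/\sqrt{\rho}$.

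The only nontrivial ingredient is the second step, i.e.\ the local comparison $\beta(0,z)\le c_1\|z\|$, which however follows at once from the smoothness and positive-definiteness of the Bergman metric tensor at the origin; everything else is reduction by $K$-invariance and elementary calculus.
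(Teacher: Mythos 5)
Your proof is correct, but it takes a genuinely different route from the paper's. The paper works with the Taylor expansion of $h(z,z)$ at $z=0$: it first observes, via the scaling $z\mapsto sz$ in the polar decomposition, that the expansion has no linear term, then identifies the quadratic part $-\langle Az,z\rangle$ with $\tfrac{1}{\lambda}(g_{ij}(0))_{ij}$, and uses the smallest eigenvalue of the Bergman metric tensor at $0$ to obtain $-\log h(z,z)\gtrsim |z|^2$ on a small neighborhood. You instead reduce by $K$-invariance of $h$ and $\beta$ to $z=\sum_j t_jf_j$ on a Jordan frame, invoke the product formula (\ref{GL_expansion_on_the_diagonal}) together with the elementary bound $-\log(1-x)\geq x$, and obtain $-\log h(z,z)\geq\sum_j t_j^2\gtrsim\|z\|^2$. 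Both proofs finish with the same local comparison $\beta(0,z)\leq c_1\|z\|$, obtained by integrating the (bounded) metric tensor along the straight segment. Your route sidesteps the Taylor-expansion and metric-tensor computation entirely, trading it for the $K$-reduction; given that (\ref{GL_expansion_on_the_diagonal}) is already on the table, this is arguably the more direct argument. Two small remarks. First, the inequality $\|z\|\leq c_2\bigl(\sum_j t_j^2\bigr)^{1/2}$ is just the triangle inequality (or Cauchy--Schwarz) and does not use $\R$-linear independence of the $f_j$; linear independence would be needed for the reverse bound, which you do not use. Second, since the reduction by $K$-invariance and the local metric comparison must hold on the same neighborhood, it is cleanest to take $V=\tfrac12\Omega=\{z\in\Omega : t_1(z)<\tfrac12\}$, which is open, convex, $K$-invariant and relatively compact in $\Omega$; shrinking to an arbitrary Euclidean ball might destroy the $K$-invariance needed for the reduction step.
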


\begin{proof}
Since $\sqrt{\log h(z,z)^{-\rho}} = \sqrt{\rho}\sqrt{-\log h(z,z)}$, it clearly suffices to check the assertion for one particular $\rho$. We may thus assume that $\rho = \lambda > p-1$.

With $s > 0$ and each fixed $z \in \Omega$ consider the polynomial 
\begin{equation*}
P_z(s):=h(sz,sz)=\prod_{j=1}^r\big{(}1-s^2t_j^2\big{)}= 1- s^2\sum_{j=1}^rt_j^2 + O(s^4) \hspace{4ex} (\mbox{as} \: \: s \downarrow 0). 
\end{equation*}
Therefore the Taylor expansion in $z=0$ of $z \mapsto h(z,z)$ cannot have a linear term. Write 
\begin{equation}\label{Expansion_h}
h(z,z) = 1+ \sum_{|\alpha+\beta| >1} a_{\alpha \beta} z^{\alpha} \overline{z}^{\beta} 
\end{equation}
and insert this expansion into \eqref{Formula_weighted_Bergman_kernel}:
\begin{equation*}
\log K_{\lambda}(z,z) = \log h(z,z)^{-\lambda} = -\lambda\log h(z,z)= -\lambda\log\Big{(} 1+ \sum_{|\alpha+\beta|>1} a_{\alpha \beta} z^{\alpha} \overline{z}^{\beta} \Big{)}.  
\end{equation*}
\par 
One obtains the Bergman metric tensor
\begin{align*}
g_{ij}(z):&=\frac{\partial^2}{\partial z_i \partial \overline{z}_j} \log K_{\lambda}(z,z)\\
&=\frac{\lambda}{h(z,z)^2} \Big{(}\sum_{|\alpha +\beta|>1} a_{\alpha \beta} \alpha_iz^{\alpha-e_i}\overline{z}^{\beta}\Big{)}\Big{(}\sum_{|\alpha+\beta|>1}a_{\alpha \beta} \beta_jz^{\alpha} \overline{z}^{\beta-e_j}\Big{)} 
\\
& \hspace{35ex} - \frac{\lambda}{h(z,z)} \sum_{|\alpha+\beta|>1} a_{\alpha \beta} \alpha_i \beta_j z^{\alpha-e_i}\overline{z}^{\beta -e_j},  
\end{align*}
where $e_i=(0,\ldots, 1, \ldots ,0) =(\delta_{i\ell})_{\ell=1, \ldots, n}\in \mathbb{Z}_+^n$. For $z=0$ we have $(g_{ij}(0))_{ij}= -\lambda (a_{e_{i} e_{j}})_{ij}$. Since the metric tensor is positive definite, it follows that 
\begin{equation*}
-A:=(a_{e_i e_j})_{ij}< 0. 
\end{equation*}
Hence we can write the quadratic term in the expansion of (\ref{Expansion_h}) as follows: 
\begin{equation}\label{Expansion_h_2}
h(z,z) = 1 - \langle Az,z\rangle + \sum_{|\alpha+\beta| >2} a_{\alpha \beta} z^{\alpha} \overline{z}^{\beta} 
\hspace{2ex} \mbox{\it where} \hspace{2ex} A= \frac{1}{\lambda} \big{(} g_{ij}(0)\big{)}_{i,j} >0.
\end{equation}
\par 
Let $\mu > 0$ denote the minimal eigenvalue of $(g_{ij}(0))_{ij}$. Then we can choose a convex zero-neighborhood $V \subset \Omega$ such that 
\begin{equation}\label{BK_estimate_1}
\log h(z,z)^{-\lambda} = \log K_{\lambda}(z,z) \geq - \lambda \log \Big{(}1 - \frac{\mu}{2} |z|^2\Big{)} \geq \frac{\lambda \mu}{2} |z|^2, \hspace{4ex} z \in V. 
\end{equation}
\par 
Given $z \in V$ we can consider the straight path
\[\gamma: [0,1] \rightarrow V: \gamma(t) = tz.\]
Then we can estimate:  
\begin{equation}\label{BK_estimate_2}
\beta(0,z) \leq \ell(\gamma)= \int_0^1 \sqrt{\sum_{i,j=1}^n g_{ij}(tz) z_i \overline{z}_j }dt \leq \sup_{z \in V} \sqrt{\| (g_{ij}(z))_{ij}\|}\cdot |z| =: C_V |z|, 
\end{equation}
where $|z|^2:= |z_1|^2 + \cdots + |z_n|^2$. A comparison of (\ref{BK_estimate_1}) and (\ref{BK_estimate_2}) gives for all $z \in V$: 
\begin{equation*}
\beta(0,z) \leq C_V|z| \leq \sqrt{\frac{2}{\lambda \mu}} C_V \sqrt{\log h(z,z)^{-\lambda}} =: C(\lambda) \sqrt{\log h(z,z)^{-\lambda}}. 
\end{equation*}
This finishes the proof. 
\end{proof}

\begin{corollary}\label{Corollary_Schur_test_hilf_1}
Let $\Omega \subset \C^n$ be a BSD and $\rho > 0$. Then there is a constant $C > 0$ (depending only on $\rho$) such that
\[\sqrt{\lambda}\beta(0,z) \leq Ch(z,z)^{-\rho\lambda}\]
for all $z \in \Omega$ and $\lambda \geq 1$.
\end{corollary}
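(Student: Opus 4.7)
My plan is to apply Lemma~\ref{BSD_auxiliary_lemma} with $U = \Omega$, $f(z) = \beta(0,z)$ and $g(z) = h(z,z)^{-\rho}$; the output $\sqrt{\lambda}f(z) \leq C' g(z)^{\lambda}$ is then precisely the desired inequality. From \eqref{GL_expansion_on_the_diagonal} we have $h(z,z) \in (0,1]$, so the condition $g \geq 1$ on $\Omega$ is immediate, and Lemma~\ref{BSD_auxiliary_lemma2} supplies an open neighborhood $V$ of $0$ together with a constant $C(\rho)$ such that $f(z) \leq C(\rho)\sqrt{\log g(z)}$ for $z \in V$.

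Next I would arrange the separation hypothesis $g \geq 1+s$ on $\Omega \setminus V$ by shrinking $V$ to a sublevel set of the form $\{z \in \Omega : h(z,z) > (1+s)^{-1/\rho}\}$; for $s > 0$ small enough this set is contained in the neighborhood provided by Lemma~\ref{BSD_auxiliary_lemma2}, so the logarithmic bound continues to hold on it. What remains is the global linear comparison $f \leq C_1 g$, i.e.\ $\beta(0,z) \leq C_1 h(z,z)^{-\rho}$ everywhere on $\Omega$. On the shrunk $V$ this is immediate from the local bound together with the elementary inequality $\sqrt{\log y} \leq y$ for $y \geq 1$. On $\Omega \setminus V$ one uses that on a BSD the Bergman distance grows at most logarithmically in $h^{-1}$: integrating the infinitesimal Bergman metric along the ``radial'' curve $s \mapsto k\sum_{j} s t_j f_j$ coming from the polar decomposition of $z$ yields a bound of the form $\beta(0,z) \leq C(1 + \log h(z,z)^{-1})$, and $\log y$ is dominated by $y^{\rho}/\rho$ for $y \geq 1$.

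With the three hypotheses of Lemma~\ref{BSD_auxiliary_lemma} in place, its conclusion delivers $\sqrt{\lambda}\beta(0,z) \leq C h(z,z)^{-\rho\lambda}$ for all $z \in \Omega$ and $\lambda \geq 1$, as claimed. The one step that is not routine is the global estimate $\beta(0,\cdot) = O(\log h(\cdot,\cdot)^{-1})$ off the neighborhood~$V$; this is a classical feature of the Bergman geometry of BSDs in their Harish-Chandra realization, and if one wanted to avoid citing it one could instead iterate the local estimate of Lemma~\ref{BSD_auxiliary_lemma2} along a chain of $\beta$-balls translated by the transitive action of $G = \mathrm{Aut}_0(\Omega)$, much in the spirit of the proof of Lemma~\ref{GL_estimate_global_BO}.
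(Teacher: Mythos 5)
Your proposal follows essentially the same route as the paper: both apply Lemma~\ref{BSD_auxiliary_lemma} with $U=\Omega$, $f=\beta(0,\cdot)$, $g=h(\cdot,\cdot)^{-\rho}$, invoke Lemma~\ref{BSD_auxiliary_lemma2} for the logarithmic bound near the origin, and observe the separation $g\geq 1+s$ off a neighborhood of $0$. The only real divergence is in establishing the global linear comparison $\beta(0,z)\leq C_1\,h(z,z)^{-\rho}$: the paper simply cites \cite[Eq.~($**$), p.~317]{BBCZ}, whereas you sketch a derivation from the (stronger, and indeed correct for BSDs) logarithmic estimate $\beta(0,z)\leq C(1+\log h(z,z)^{-1})$ obtained by integrating the Bergman metric along the polar-decomposition ray; note, though, that your parenthetical fallback of iterating Lemma~\ref{BSD_auxiliary_lemma2} along a chain of $\beta$-balls does not go through as stated, since that lemma is an estimate specifically at the origin and does not translate under $G$ in the way the additive bound of Lemma~\ref{GL_estimate_global_BO} does.
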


\begin{proof}
It holds $h(z,z)^{-\rho} \geq 1$ and from \cite[Equation ($**$) on p.~317]{BBCZ} one has for all $z \in \Omega$: 
$$\beta(0,z) \leq C_1h(z,z)^{-\rho}.$$
Furthermore, Lemma \ref{BSD_auxiliary_lemma2} implies $\beta(0,z) \leq C_2\sqrt{\log h(z,z)^{-\rho}}$ for all $z$ in a suitable zero-neighborhood $V \subset \Omega$.  
The product form (\ref{GL_expansion_on_the_diagonal}) shows that $h(z,z)^{-\rho} > 1$ for $z \neq 0$ and $h(z,z)^{-\rho} \to \infty$ as $z \to \partial\Omega$. Therefore one also has 
$$h(z,z)^{-\rho} \geq 1 + s$$ 
for some $s > 0$ and all $z \in \Omega \setminus V$. Setting $f(z) := \beta(0,z)$ and $g(z) := h(z,z)^{-\rho}$, the result follows from Lemma \ref{BSD_auxiliary_lemma}.
\end{proof}
\begin{proposition}\label{estimate_norm_Hankel_operator}
Let $f \in \textup{BO}^{\lambda}(\Omega)$. Then there is a constant $C > 0$, independent of $f$ and of $\lambda > 4p$, such that 
\begin{equation*}
\| H_f^{\lambda}\|_{\lambda}  \leq C\| f\|_{\textup{BO}^{\lambda}}.
\end{equation*}
\end{proposition}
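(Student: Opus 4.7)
The plan is to represent $H_f^\lambda$ as an integral operator and apply Schur's test. For $g \in \mathcal{A}_\lambda^2(\Omega)$ (say in the dense domain $\mathcal{D}$), the reproducing property of $K_\lambda(z,\cdot) = h(\cdot,z)^{-\lambda}$ gives
\[
(H_f^\lambda g)(z) = f(z)g(z) - (P_\lambda(fg))(z) = \int_\Omega \bigl( f(z) - f(w) \bigr) g(w)\, h(z,w)^{-\lambda} \, dv_\lambda(w),
\]
so it suffices to bound the norm of the positive integral operator on $L^2(\Omega, dv_\lambda)$ with kernel $N(z,w) := |f(z) - f(w)|\,|h(z,w)|^{-\lambda}$.

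The key pointwise estimate is to dominate $|f(z) - f(w)|$ by a power of the Moebius-invariant quantity
\[
\Phi(z,w) := \frac{|h(z,w)|^2}{h(z,z) h(w,w)}.
\]
Via the identity $h(\phi_z(w), \phi_z(w)) = h(z,z)h(w,w)/|h(z,w)|^2$, where $\phi_z$ is the geodesic symmetry sending $z$ to $0$, combined with $h(u,u) \leq 1$ on $\Omega$, one obtains $\Phi(z,w) \geq 1$. Starting from Lemma \ref{GL_estimate_global_BO} I write $|f(z) - f(w)| \leq \|f\|_{\textup{BO}^\lambda}(1 + \beta_\lambda(z,w))$, then invoke the Moebius invariance $\beta(z,w) = \beta(0, \phi_z(w))$ and Corollary \ref{Corollary_Schur_test_hilf_1} applied at the point $\phi_z(w)$ to conclude that for every $\rho > 0$ there is $C_\rho > 0$ with
\[
|f(z) - f(w)| \leq C_\rho \|f\|_{\textup{BO}^\lambda}\, \Phi(z,w)^{\rho\lambda},
\]
the additive $1$ being absorbed using $\Phi \geq 1$. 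Substituting into $N$ gives
\[
N(z,w) \leq C_\rho \|f\|_{\textup{BO}^\lambda}\, h(z,z)^{-\rho\lambda} h(w,w)^{-\rho\lambda} |h(z,w)|^{-(1-2\rho)\lambda}.
\]

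I then apply Schur's test on $L^2(\Omega, dv_\lambda)$ with test function $\psi(z) := h(z,z)^{-s\lambda}$ for a small $s > 0$. Up to the prefactor $h(z,z)^{-\rho\lambda}$, the integral $\int N(z,w)\, \psi(w)\, dv_\lambda(w)$ is exactly of the form covered by Lemma \ref{Forelli_Rudin_estimates_standard} with parameters $\alpha = (1-\rho-s)\lambda$ and $t = (s-\rho)\lambda$. A calibrated choice such as $\rho = 1/8$, $s = 1/4$ ensures $\alpha > p - 1$ and $t > (r-1)a/2$ as soon as $\lambda > 4p$, so Forelli-Rudin produces a bound $\leq C\, h(z,z)^{-(s-\rho)\lambda}$, and the resulting Schur majorant is $C \|f\|_{\textup{BO}^\lambda}\, \psi(z)$. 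By the symmetry of $N$ in $z, w$ the analogous bound holds in the other variable, and Schur's test then yields $\|H_f^\lambda\|_\lambda \leq C \|f\|_{\textup{BO}^\lambda}$.

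The principal technical obstacle is ensuring uniformity of the constant as $\lambda \to \infty$: since both Forelli-Rudin parameters $\alpha, t$ grow linearly with $\lambda$, the implicit constant in Lemma \ref{Forelli_Rudin_estimates_standard} couples nontrivially with the normalizing constant $c_\lambda$ of $dv_\lambda$, and their product must remain bounded. The standard remedy, which is the refinement of \cite{BBCZ} alluded to in the introduction, is to use the Moebius covariance
\[
dv_\lambda(\phi_z(u)) = \frac{h(z,z)^\lambda}{|h(z,u)|^{2\lambda}}\, dv_\lambda(u)
\]
to pull the Schur integral back to the basepoint $z = 0$, where a sharp form of Forelli-Rudin formulated directly for $dv_\lambda$ delivers a $\lambda$-uniform constant in the relevant parameter range. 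This bookkeeping, together with the precise choice of $\rho$ and $s$ producing the threshold $\lambda > 4p$, is where the real technical work concentrates.
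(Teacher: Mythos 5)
Your overall strategy (represent $H_f^\lambda$ as an integral operator, dominate $|f(z)-f(w)|$ using Lemma \ref{GL_estimate_global_BO} and Corollary \ref{Corollary_Schur_test_hilf_1}, Schur test with a power of $h(z,z)$) is the same as the paper's, but the order in which you deploy the ingredients creates a genuine gap that you flag but do not close.

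The problem is exactly the $\lambda$-uniformity you identify at the end. You absorb $\beta_\lambda(z,w)+1$ into a \emph{pointwise} bound $C_\rho\Phi(z,w)^{\rho\lambda}$ \emph{before} the Schur integral, and then bound the resulting Schur integral by Lemma \ref{Forelli_Rudin_estimates_standard} with parameters $\alpha=(1-\rho-s)\lambda$, $t=(s-\rho)\lambda$. That lemma only asserts the existence of some $C=C(\alpha,t)$; with $\alpha,t$ proportional to $\lambda$ and the extra factor $c_\lambda\sim\lambda^n$ coming from $dv_\lambda$, you need $C(\alpha,t)\,c_\lambda$ to stay bounded as $\lambda\to\infty$, and nothing in the paper's Lemma \ref{Forelli_Rudin_estimates_standard} supplies this. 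Saying ``a sharp form of Forelli--Rudin for $dv_\lambda$ delivers a $\lambda$-uniform constant'' is a statement of the difficulty, not a resolution.

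The paper closes this gap by applying the ingredients in a different order so that Forelli--Rudin is never needed. It keeps the prefactor $\beta_\lambda(z,w)+1$ intact, changes variables $w\mapsto\varphi_z(w)$ in the Schur integral, so the prefactor becomes $\beta_\lambda(0,w)+1$ (a function of the pulled-back variable alone), and only then applies Corollary \ref{Corollary_Schur_test_hilf_1}. With the test function $h(z)=h(z,z)^t$ and $t=-\lambda/2$ the power of $|h(z,w)|$ after the change of variables is $-2t-\lambda=0$, so the entire $z$-dependence cancels and the integral reduces to $\int_\Omega h(w,w)^{\lambda/4-p}\,dv(w)=1/c_{\lambda/4}$ (after taking $\rho=1/4$ in Corollary \ref{Corollary_Schur_test_hilf_1}). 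The resulting constant is $C(1/4)\,c_\lambda/c_{\lambda/4}$, and only the boundedness of $c_\lambda/c_{\lambda/4}$ needs to be checked, which is elementary. In other words, the precise choice $t=-\lambda/2$ is what makes the $\lambda$-uniformity come for free; the choices you propose (e.g.\ $\rho=1/8$, $s=1/4$, hence Schur function $h(z,z)^{-\lambda/4}$) do not make the $|h(z,w)|$-factor vanish, and you would then genuinely have to quantify the Forelli--Rudin constant's $\lambda$-dependence, which is the hard step you are leaving open.
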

\begin{proof}
Let $g \in \mathcal{A}_{\lambda}^2(\Omega)$. From the integral expression of the Hankel operator
\begin{equation*}
\big{[} H^{\lambda}_fg\big{]}(z)= \int_{\Omega} \big{[} f(z)-f(w) \big{]} g(w) K_{\lambda}(z,w) dv_{\lambda}(w), \hspace{3ex} z \in \Omega,
\end{equation*}
and the estimate (\ref{Global_estimate_BO}) in Lemma \ref{GL_estimate_global_BO} it follows that
\begin{equation*}
\big{|}H_f^{\lambda}g(z)\big{|}\leq \|f\|_{\textup{BO}^{\lambda}} \int_{\Omega} (\beta_{\lambda}(z,w)+1)|h(z,w)|^{-\lambda} |g(w)| \, dv_{\lambda}(w). 
\end{equation*}
The constant $C>0$ can be chosen as the norm of the integral operator
\begin{equation*}
L^2(\Omega, dv_{\lambda}) \ni u \mapsto \mathcal{L}_{\lambda}(u)(z):= \int_{\Omega} \underbrace{(\beta_{\lambda}(z,w)+1)|h(z,w)|^{-\lambda}}_{=:L_{\lambda}(z,w)} 
u(w) \, dv_{\lambda}(w) \in L^2(\Omega,dv_{\lambda}). 
\end{equation*}
\par 
In order to estimate the norm (independently of $\lambda$) we apply the Schur test. Since the integral kernel $L_{\lambda}(z,w)$ of $\mathcal{L}_{\lambda}$ is symmetric in $z$ and $w$ it 
is sufficient to construct a positive function $h$ on $\Omega$ and a constant $C > 0$ independent of $\lambda$ such that for all $z \in \Omega$:
\begin{equation*}
I_{\lambda}(z):=\int_{\Omega} (\beta_{\lambda}(z,w)+1)|h(z,w)|^{-\lambda} h(w) \, dv_{\lambda}(w) \leq C h(z). 
\end{equation*}
\par 
With $t$ to be determined later put
\[h(z):= h(z,z)^t,\]
and let $\varphi_z$ be an involutive automorphism of $\Omega$ interchanging $0$ and $z$. A change of variables and the identity $\beta_{\lambda}(z,w)= \beta_{\lambda}(0, \varphi_z(w))$ gives:
\begin{align*}
I_{\lambda}(z)
 &= c_{\lambda} \int_{\Omega} \big{[}\beta_{\lambda}(0,\varphi_z(w))+1\big{]} |h(z,w)|^{-\lambda}h(w,w)^{t+\lambda-p} \, dv(w) \\
 &= c_{\lambda} \int_{\Omega} \big{[}\beta_{\lambda}(0,w)+1\big{]} |h(z,\varphi_z(w))|^{-\lambda}h(\varphi_z(w),\varphi_z(w))^{t+\lambda-p} \, dv(\varphi_z(w)) =(+). 
\end{align*}
\par 
We can use the following standard relations (see e.g.~\cite{E3})
\begin{equation*}
h(z,\varphi_z(w)) = \frac{h(z,z)}{h(z,w)} \hspace{3ex} \mbox{\it and} \hspace{3ex} h(\varphi_z(w),\varphi_z(w)) = \frac{h(z,z)h(w,w)}{|h(z,w)|^2},
\end{equation*}
as well as
\begin{equation*}
dv(\varphi_z(w)) = \left(\frac{h(z,z)}{|h(z,w)|^2}\right)^p \, dv(w),
\end{equation*}
to obtain
\begin{equation*}
(+) = c_{\lambda}\underbrace{h(z,z)^t}_{=h(z)}\int_{\Omega} [\beta_{\lambda}(0,w)+1] h(w,w)^{t+\lambda-p}|h(z,w)|^{-2t-\lambda} \, dv(w).
\end{equation*}
\par 
Now we apply Corollary \ref{Corollary_Schur_test_hilf_1}. For any $\rho > 0$ there is a constant $C = C(\rho) > 0$ (independent of $\lambda$ and $w \in \Omega$) such that
\begin{equation}\label{Estimate_Bergman_metric_puls_one}
1 + \beta_{\lambda} (0,w) = 1 + \sqrt{\frac{\lambda}{p}}\beta(0,w) \leq C(\rho)h(w,w)^{-\rho\lambda}.
\end{equation}
Hence we can further estimate $(+)$ by
\begin{equation*}
(+) \leq c_{\lambda}C(\rho)h(z)\int_{\Omega} h(w,w)^{t+(1-\rho)\lambda-p}|h(z,w)|^{-2t-\lambda} \, dv(w) = (++). 
\end{equation*}
Choosing $t= -\frac{\lambda}{2}$ and $\rho= \frac{1}{4}$, we obtain
\[(++) = c_{\lambda}C\Big{(}\frac{1}{4}\Big{)}h(z) \int_{\Omega}h(w,w)^{\frac{\lambda}{4}-p} \, dv(w) = \frac{c_{\lambda}}{c_{\frac{\lambda}{4}}}C\Big{(}\frac{1}{4}\Big{)}h(z).\]
Since $\frac{c_{\lambda}}{c_{\frac{\lambda}{4}}}$ is bounded (as a function of $\lambda$, cf. \cite{FK}), the result follows from the Schur test.
\end{proof}
A relation between $\textup{BMO}^{\lambda}(\Omega)$ and $\textup{BO}^{\lambda}(\Omega)$ is given by the following result: 
\begin{theorem}\label{Theorem_Estimate_BMO_BO}
Let $g \in \textup{BMO}^{\lambda}(\Omega)$ and $\lambda \geq p$. Then we have for all $z,w \in \Omega$ 
\begin{equation*}
\big{|} \mathcal{B}_{\lambda}(g)(w)-\mathcal{B}_{\lambda}(g)(z) \big{|} \leq  2 \| g\|_{\textup{BMO}^{\lambda}} \beta_{\lambda} (z,w). 
\end{equation*}
In particular,  $\mathcal{B}_{\lambda}(g) \in \textup{BO}^{\lambda}(\Omega)$ and 
\begin{equation*}
\| \mathcal{B}_{\lambda}(g) \|_{\textup{BO}^{\lambda}} \leq 2 \| g\|_{\textup{BMO}^{\lambda}}. 
\end{equation*}
\end{theorem}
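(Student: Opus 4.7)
The plan is to integrate along a $\beta_{\lambda}$-geodesic $\gamma\colon[0,L]\to\Omega$ from $z$ to $w$ parameterised by arclength, so that $L=\beta_{\lambda}(z,w)$ and $|\dot\gamma(t)|_{\beta_{\lambda}}\equiv 1$. Writing
\[
\mathcal{B}_{\lambda}(g)(w)-\mathcal{B}_{\lambda}(g)(z)=\int_0^L\frac{d}{dt}\mathcal{B}_{\lambda}(g)(\gamma(t))\,dt,
\]
the theorem reduces to the pointwise infinitesimal estimate $\bigl|\frac{d}{dt}\mathcal{B}_{\lambda}(g)(\gamma(t))\bigr|\leq 2\|g\|_{\textup{BMO}^{\lambda}}$.

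To establish this, I first split $g=g_r+ig_i$ into real and imaginary parts. Both $\mathcal{B}_{\lambda}(g_r)$ and $\mathcal{B}_{\lambda}(g_i)$ are real valued and $\textup{MO}^{\lambda}(g)=\textup{MO}^{\lambda}(g_r)+\textup{MO}^{\lambda}(g_i)$, so $\bigl|\frac{d}{dt}\mathcal{B}_{\lambda}(g)\bigr|^2=\bigl|\frac{d}{dt}\mathcal{B}_{\lambda}(g_r)\bigr|^2+\bigl|\frac{d}{dt}\mathcal{B}_{\lambda}(g_i)\bigr|^2$, reducing matters to proving $\bigl|\frac{d}{dt}\mathcal{B}_{\lambda}(f)(\gamma(t))\bigr|\leq 2\sqrt{\textup{MO}^{\lambda}(f)(\gamma(t))}\,\|\partial_t k_{\gamma(t)}^{\lambda}\|_{\lambda}$ for real $f$. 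Differentiating $\mathcal{B}_{\lambda}(f)(\gamma(t))=\langle f\,k_{\gamma(t)}^{\lambda},k_{\gamma(t)}^{\lambda}\rangle_{\lambda}$ and exploiting that $\|k_{\gamma(t)}^{\lambda}\|_{\lambda}\equiv 1$ forces $\re\langle\partial_t k_{\gamma(t)}^{\lambda},k_{\gamma(t)}^{\lambda}\rangle_{\lambda}=0$, so the real constant $c:=\mathcal{B}_{\lambda}(f)(\gamma(t))$ may be subtracted from $f$ without changing the derivative. Cauchy--Schwarz then yields
\[
\Big|\tfrac{d}{dt}\mathcal{B}_{\lambda}(f)(\gamma(t))\Big|=\bigl|2\re\langle(f-c)\,\partial_t k_{\gamma(t)}^{\lambda},k_{\gamma(t)}^{\lambda}\rangle_{\lambda}\bigr|\leq 2\sqrt{\textup{MO}^{\lambda}(f)(\gamma(t))}\,\|\partial_t k_{\gamma(t)}^{\lambda}\|_{\lambda}.
\]

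The main obstacle, as I see it, is the identification $\|\partial_t k_{\gamma(t)}^{\lambda}\|_{\lambda}\leq|\dot\gamma(t)|_{\beta_{\lambda}}=1$ in a suitable gauge. Since the substitution $k_z^{\lambda}\mapsto e^{i\theta(z)}k_z^{\lambda}$ alters neither $|k_z^{\lambda}|^2$ nor $\frac{d}{dt}\mathcal{B}_{\lambda}(g)$, I am free to choose a gauge along $\gamma$ minimising $\|\partial_t k_{\gamma(t)}^{\lambda}\|_{\lambda}$. Combining the Taylor expansion
\[
\log\bigl|\langle k_{z+\delta}^{\lambda},k_z^{\lambda}\rangle_{\lambda}\bigr|^2=-g_{ij}^{\lambda}(z)\,\delta^i\overline{\delta^j}+O(|\delta|^3),
\]
which follows from $K_{\lambda}(z,w)=h(z,w)^{-\lambda}$ by writing the left-hand side as $\log K_{\lambda}(z+\delta,z)+\log K_{\lambda}(z,z+\delta)-\log K_{\lambda}(z+\delta,z+\delta)-\log K_{\lambda}(z,z)$ and using that the first two terms are holomorphic respectively anti-holomorphic in $\delta$, with the twice-differentiated identity $\|k_{\gamma(t)}^{\lambda}\|_{\lambda}^2\equiv 1$ yields
\[
\|\partial_t k_{\gamma(t)}^{\lambda}\|_{\lambda}^2=|\dot\gamma(t)|_{\beta_{\lambda}}^2+\bigl(\Ima\langle\partial_t k_{\gamma(t)}^{\lambda},k_{\gamma(t)}^{\lambda}\rangle_{\lambda}\bigr)^2,
\]
whose infimum over smooth gauges equals $|\dot\gamma(t)|_{\beta_{\lambda}}$. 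This realises the Bergman metric as the Fubini--Study metric pulled back along the coherent-state embedding $z\mapsto[k_z^{\lambda}]$. Inserting into the Cauchy--Schwarz bound, using $\sqrt{\textup{MO}^{\lambda}(g)(\gamma(t))}\leq\|g\|_{\textup{BMO}^{\lambda}}$, and integrating over $[0,L]$ proves the first inequality; the assertion $\|\mathcal{B}_{\lambda}(g)\|_{\textup{BO}^{\lambda}}\leq 2\|g\|_{\textup{BMO}^{\lambda}}$ is then immediate from the definition of $\textup{BO}^{\lambda}$ by restricting to pairs $z,w$ with $\beta_{\lambda}(z,w)<1$.
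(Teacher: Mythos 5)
Your argument is correct, and since the paper simply cites \cite[Theorem 4.9]{BC1} for this statement without supplying a proof, your proposal is a genuinely different, self-contained route. The essential chain---integrate $\mathcal{B}_{\lambda}(g)$ along a $\beta_{\lambda}$-geodesic, use $\re\langle\partial_t k_{\gamma(t)}^{\lambda},k_{\gamma(t)}^{\lambda}\rangle_{\lambda}=0$ to subtract the constant $c=\mathcal{B}_{\lambda}(f)(\gamma(t))$ under the derivative, apply Cauchy--Schwarz so that the first factor is exactly $\sqrt{\textup{MO}^{\lambda}(f)(\gamma(t))}$ by the second form of \eqref{Definition_rem_mean_oscillation}, and then kill the phase of $k_{\gamma(t)}^{\lambda}$ by a gauge choice to identify $\|\partial_t k_{\gamma(t)}^{\lambda}\|_{\lambda}$ with the Bergman speed---all checks out. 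The key identity $\|\partial_t k_{\gamma(t)}^{\lambda}\|_{\lambda}^{2}-|\langle\partial_t k_{\gamma(t)}^{\lambda},k_{\gamma(t)}^{\lambda}\rangle_{\lambda}|^{2}=\sum_{i,j}g^{\lambda}_{ij}(\gamma(t))\dot\gamma^{i}\overline{\dot\gamma^{j}}$ is precisely the statement that the Bergman metric is the Fubini--Study metric pulled back under the coherent-state map $z\mapsto[k_z^{\lambda}]$, and your Taylor-expansion derivation of it from $\log K_{\lambda}(z+\delta,z)+\log K_{\lambda}(z,z+\delta)-\log K_{\lambda}(z+\delta,z+\delta)-\log K_{\lambda}(z,z)=-g_{ij}^{\lambda}(z)\delta^{i}\overline{\delta^{j}}+O(|\delta|^{3})$, combined with the twice-differentiated normalization $\|k_{\gamma(t)}^{\lambda}\|_{\lambda}^{2}\equiv 1$, is correct. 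The split into $g_r,g_i$ together with $\textup{MO}^{\lambda}(g)=\textup{MO}^{\lambda}(g_r)+\textup{MO}^{\lambda}(g_i)$ and the Pythagorean sum of derivatives then gives exactly the factor $2\|g\|_{\textup{BMO}^{\lambda}}$. One thing left implicit but needed is the existence of a minimizing geodesic between $z$ and $w$; this holds because the Bergman metric on a bounded symmetric domain is complete, hence Hopf--Rinow applies. What your approach buys over the paper's citation is a transparent, coordinate-free explanation of why the sharp Lipschitz constant for $\mathcal{B}_{\lambda}(g)$ in the Bergman metric is $2\|g\|_{\textup{BMO}^{\lambda}}$, via a pointwise infinitesimal Cauchy--Schwarz bound; it also makes clear that the estimate is intrinsically a statement about the coherent-state embedding, not specific to the Jordan-algebraic structure of $\Omega$.
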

\begin{proof}
See \cite[Theorem 4.9]{BC1}. 
\end{proof}
Let $\lambda > 4p$, replace $f\in \textup{BO}^{\lambda}(\Omega)$ in Proposition \ref{estimate_norm_Hankel_operator}  by $\mathcal{B}_{\lambda}(f)$,  where $f \in \textup{BMO}^{\lambda}(\Omega)$, 
and use Theorem \ref{Theorem_Estimate_BMO_BO}. We obtain a constant $C>0$ independent of $f$ and $\lambda $ such that
\begin{equation}\label{inequality_operator_norm_weighted_Hankel_operator}
\| H_{\mathcal{B}_{\lambda}(f)}^{\lambda} \|_{\lambda} \leq C  \|\mathcal{B}_{\lambda}(f) \|_{\textup{BO}^{\lambda}} \leq 2C \| f\|_{\textup{BMO}^{\lambda}}. 
\end{equation}
\par 
In particular, let $f \in \textup{UC}(\Omega)$ be uniformly continuous w.r.t.~the Bergman metric $\beta(z,w)$. We analyze the asymptotic behavior of $\| f\|_{\textup{BMO}^{\lambda}}$ as $\lambda \rightarrow \infty$. 
By applying a  change of variables in the integral, the Berezin transform (\ref{Defintion_Berezin_Transform_of_a_function}) of a function $f$ can be represented as a convolution type integral (cf. \cite{BC1}): 
\begin{equation}\label{Berezin_transform_second_form}
\mathcal{B}_{\lambda}(f)(x)=c_{\lambda}\int_{\Omega} (f \circ \varphi_x)(y) h(y,y)^{\lambda-p} \, dv(y). 
\end{equation}
\par 
We will use the following asymptotic behavior of the Berezin transform for uniformly continuous symbols: 
\begin{proposition}\label{Lemma_convergence_Berezin_transform_UC_functions}
Let $\Omega\subset \mathbb{C}^n$ be a BSD and $f \in \textup{UC}(\Omega)$. Then
\[\lim\limits_{\lambda \to \infty} \mathcal{B}_{\lambda}(f) = f,\]
where the convergence is uniformly on $\Omega$. 
\end{proposition}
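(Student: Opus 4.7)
The plan is to exploit the convolution representation \eqref{Berezin_transform_second_form} together with the $G$-invariance of $\beta$ and the fact that $dv_\lambda$ is a probability measure on $\Omega$. Since $\varphi_x(0) = x$, subtracting the trivial integral of the constant $f(x)$ yields
\[
\mathcal{B}_\lambda(f)(x) - f(x) = \int_\Omega \bigl[(f\circ\varphi_x)(y) - f(x)\bigr]\, dv_\lambda(y).
\]
Because $\beta$ is $G$-invariant we have $\beta(\varphi_x(y), x) = \beta(y,0)$, so the $\beta$-uniform continuity of $f$ supplies, for any $\epsilon > 0$, a single $\delta > 0$ \emph{independent of $x$} with $|(f\circ\varphi_x)(y) - f(x)| < \epsilon$ whenever $\beta(0,y) < \delta$. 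This is exactly what will deliver uniformity in $x$ at the end of the argument.

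Next I would split the integral at $\beta(0,y) = \delta$. The inner part is bounded by $\epsilon$ since $dv_\lambda$ is a probability measure. For the tail I invoke $\textup{UC}(\Omega) \subset \textup{BO}(\Omega)$ from \eqref{Inclusion_UC_BO} together with the global oscillation estimate of Lemma \ref{GL_estimate_global_BO} to bound $|(f\circ\varphi_x)(y) - f(x)| \leq \|f\|_{\textup{BO}}(1 + \beta(0,y))$, reducing the matter to showing
\[
J_\lambda(\delta) := \int_{\{\beta(0,y) \geq \delta\}} (1 + \beta(0,y))\, dv_\lambda(y) \longrightarrow 0 \quad \text{as } \lambda \to \infty.
\]
The variable $x$ has now dropped out entirely, so uniform convergence in $x$ will follow automatically once this limit is established.

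For the tail decay I would argue as follows. A short contrapositive use of Lemma \ref{BSD_auxiliary_lemma2} (which forces $\beta(0,y) \to 0$ whenever $h(y,y) \to 1$) furnishes some $q_\delta \in (0,1)$ with $h(y,y) \leq q_\delta$ throughout $\{\beta(0,y) \geq \delta\}$. Combining this with the bound $1 + \beta(0,y) \leq C_\rho\, h(y,y)^{-\rho}$ valid for any fixed $\rho > 0$ (the estimate from p.~317 of \cite{BBCZ} also used in Corollary \ref{Corollary_Schur_test_hilf_1}), and splitting the weight $h^{\lambda-p-\rho}$ into two halves, one bounded on the tail by $q_\delta^{(\lambda-p-\rho)/2}$ and the other integrated as $c_{(\lambda+p-\rho)/2}^{-1}$, I obtain an estimate of the shape
\[
J_\lambda(\delta) \leq C_\rho\, q_\delta^{(\lambda-p-\rho)/2}\, \frac{c_\lambda}{c_{(\lambda+p-\rho)/2}}.
\]

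The main obstacle is the last factor: one must verify that the ratio $c_\lambda / c_{\lambda/2 + O(1)}$ grows at most subexponentially in $\lambda$. This is the direct analogue of the boundedness of $c_\lambda / c_{\lambda/4}$ already used in the proof of Proposition \ref{estimate_norm_Hankel_operator} and follows from the explicit Gindikin-type formula for $c_\lambda$ in \cite{FK}. Once this is in hand, the exponentially decaying factor $q_\delta^{\lambda/2}$ dominates, $J_\lambda(\delta) \to 0$, and combined with the first paragraph's uniform bound by $\epsilon$, the proof concludes since $\epsilon > 0$ was arbitrary.
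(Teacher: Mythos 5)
Your argument is correct, and it takes a genuinely different route from the paper in the most literal sense: the paper does not prove Proposition~\ref{Lemma_convergence_Berezin_transform_UC_functions} at all but delegates it to \cite[Proposition 4.4]{BC1}, whereas you give a self-contained proof. Your scheme --- subtract $f(x)=(f\circ\varphi_x)(0)$, use $G$-invariance of $\beta$ to transfer uniform continuity to the integration variable, split at $\beta(0,y)=\delta$, control the near-origin part by $\epsilon$ via the probability normalization of $dv_\lambda$, and kill the tail using $\textup{UC}(\Omega)\subset\textup{BO}(\Omega)$ and Lemma~\ref{GL_estimate_global_BO} together with exponential decay of $h(y,y)^{\lambda-p}$ --- is sound and is essentially the same two-region strategy the paper itself deploys one step later in the proof of Proposition~\ref{Lemma_limit_BMO_lambda_UC}. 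Two cosmetic remarks. First, for the tail estimate the paper's version of this step is slightly lighter: it invokes $1+\beta(0,\cdot)\in L^2(\Omega,dv)$ from \cite[Theorem~E]{BBCZ} and $c_\lambda\sim\lambda^n$, pulls out a fixed power $(1-s)^{\lambda-\lambda_0}$ and lets polynomial growth of $c_\lambda$ lose to exponential decay; this avoids having to control the ratio $c_\lambda/c_{(\lambda+p-\rho)/2}$, although your control of that ratio is of the same nature as the $c_\lambda/c_{\lambda/4}$ bound already used in Proposition~\ref{estimate_norm_Hankel_operator} and is therefore unobjectionable. Second, you do not actually need Lemma~\ref{BSD_auxiliary_lemma2} to produce $q_\delta<1$: since $h$ is a polynomial, hence continuous on $\overline\Omega$, $h(y,y)=1$ only at $y=0$ by \eqref{GL_expansion_on_the_diagonal}, and $\{\beta(0,\cdot)<\delta\}$ is an open neighbourhood of the origin, the closure of $\{\beta(0,y)\ge\delta\}$ in $\overline\Omega$ is a compact set avoiding $0$, on which $h(\cdot,\cdot)$ attains a maximum strictly below $1$. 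Neither remark affects the validity of your proof.
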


\begin{proof}
See \cite[Proposition 4.4]{BC1}. 
\end{proof}
According to (\ref{Definition_rem_mean_oscillation}) and (\ref{Berezin_transform_second_form}), we can write
\begin{align*}
\textup{MO}^{\lambda}(f)(x)
&= c_{\lambda} \int_{\Omega} \big{|} f\circ \varphi_x(y)-\mathcal{B}_{\lambda}(f)(x)|^2 h(y,y)^{\lambda-p} \, dv(y) \\
&= c_{\lambda} \int_{\Omega} \big{|} f \circ \varphi_x(y) - \mathcal{B}_{\lambda} (f) \circ \varphi_x(0) \big{|}^2 h(y,y)^{\lambda-p} \, dv(y). 
\end{align*}
The next observation is crucial in the proof of our main theorem:
\begin{proposition}\label{Lemma_limit_BMO_lambda_UC}
Let $f \in \textup{UC}(\Omega)$. Then $\lim_{\lambda \rightarrow \infty} \| f\|_{\textup{BMO}^{\lambda}} = 0$. 
\end{proposition}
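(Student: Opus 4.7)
The plan is to exploit the variational identity
\[\textup{MO}^{\lambda}(f)(x) = \min_{c \in \mathbb{C}} \mathcal{B}_{\lambda}(|f-c|^2)(x) \leq \mathcal{B}_{\lambda}\big(|f-f(x)|^2\big)(x),\]
which follows by expanding $|f-c|^2$ and observing that the minimum in $c$ is attained at $c = \mathcal{B}_{\lambda}(f)(x)$. Combining this with the convolution expression \eqref{Berezin_transform_second_form} and the identity $\varphi_x(0) = x$ gives
\[\textup{MO}^{\lambda}(f)(x) \leq c_{\lambda}\int_{\Omega} |f(\varphi_x(y)) - f(x)|^2\, h(y,y)^{\lambda-p}\, dv(y).\]
Since $\beta$ is invariant under $\textup{Aut}_0(\Omega)$ and $\varphi_x$ is an automorphism, $\beta(\varphi_x(y),x) = \beta(y,0)$, so both the modulus of uniform continuity of $f$ and the $\textup{BO}$-bound depend on $y$ only through $\beta(0,y)$ and produce estimates that are uniform in $x$.

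Fix $\epsilon > 0$ and choose $\delta > 0$ so that $\beta(u,v) < \delta$ implies $|f(u)-f(v)| < \epsilon$. Split $\Omega = B_\delta \cup E_\delta$ with $B_\delta := \{y \in \Omega : \beta(0,y) < \delta\}$. On $B_\delta$ the integrand is pointwise less than $\epsilon^2$, and since $dv_\lambda$ is a probability measure, the contribution is bounded by $\epsilon^2$. On $E_\delta$ the inclusion $\textup{UC}(\Omega) \subset \textup{BO}(\Omega)$ from \eqref{Inclusion_UC_BO} and Lemma \ref{GL_estimate_global_BO} yield
\[|f(\varphi_x(y))-f(x)|^2 \leq \|f\|_{\textup{BO}}^2\big(1+\beta(0,y)\big)^2 \leq 2\|f\|_{\textup{BO}}^2\big(1 + \beta(0,y)^2\big),\]
so it suffices to show
\[I_\lambda := c_\lambda \int_{E_\delta} \big(1 + \beta(0,y)^2\big)\, h(y,y)^{\lambda-p}\, dv(y) \longrightarrow 0 \quad \text{as } \lambda \to \infty.\]

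Two ingredients drive this. First, by the polar decomposition \eqref{decomposition_Omega} together with \eqref{GL_expansion_on_the_diagonal}, a sequence $y_n \in E_\delta$ with $h(y_n,y_n) \to 1$ would force all polar coordinates $t_j(y_n) \to 0$, hence $y_n \to 0$ in the Euclidean norm and then $\beta(0,y_n) \to 0$, contradicting $y_n \in E_\delta$; therefore $\eta(\delta) := \sup_{y \in E_\delta} h(y,y) < 1$. Second, Corollary \ref{Corollary_Schur_test_hilf_1} applied with $\rho = 1/4$ gives $\beta(0,y)^2 \leq \frac{C^2}{\lambda}h(y,y)^{-\lambda/2}$ for all $\lambda \geq 1$. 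Fix $\lambda_0 > p-1$ and factor $h(y,y)^{\lambda-p}$ (respectively $h(y,y)^{\lambda/2-p}$ in the second piece) as $h(y,y)^{\lambda-\lambda_0}h(y,y)^{\lambda_0-p}$; using $h(y,y)^{\lambda-\lambda_0}\leq \eta(\delta)^{\lambda-\lambda_0}$ on $E_\delta$ reduces $I_\lambda$ to a constant times $\frac{c_\lambda}{c_{\lambda_0}}\eta(\delta)^{\alpha\lambda}$ for some $\alpha > 0$. Since $\eta(\delta) < 1$ while $c_\lambda/c_{\lambda_0}$ grows at most polynomially in $\lambda$ (in fact, the ratio estimate cited in the proof of Proposition \ref{estimate_norm_Hankel_operator} suffices), $I_\lambda \to 0$. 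Taking the supremum over $x \in \Omega$ yields $\limsup_{\lambda\to\infty}\|f\|_{\textup{BMO}^\lambda}^2 \leq \epsilon^2$, and letting $\epsilon \downarrow 0$ concludes the argument.

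The main obstacle is that $f \in \textup{UC}(\Omega)$ is allowed to be unbounded, so the far-region integrand carries the genuinely unbounded weight $\beta(0,y)^2$; the naive $L^\infty$-estimate is unavailable, and one must carefully balance this polynomial-in-$\beta$ growth against the exponential concentration of $dv_\lambda$ near the origin. The key tool that makes this balance possible is precisely Corollary \ref{Corollary_Schur_test_hilf_1}, which was custom-built for such situations.
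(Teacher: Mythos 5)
Your proof is correct, and it takes a genuinely different (and somewhat cleaner) route than the paper's in two places.

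\emph{Near region.} The paper compares $f\circ\varphi_x(y)$ to $\mathcal{B}_{\lambda}(f)(x)=\mathcal{B}_{\lambda}(f)\circ\varphi_x(0)$ and must therefore invoke Proposition~\ref{Lemma_convergence_Berezin_transform_UC_functions} (uniform convergence $\mathcal{B}_\lambda f\to f$) to control the extra term $|(f-\mathcal{B}_\lambda f)\circ\varphi_x(0)|$. You instead use the variational characterization $\textup{MO}^\lambda(f)(x)=\min_{c}\mathcal{B}_\lambda(|f-c|^2)(x)\le \mathcal{B}_\lambda(|f-f(x)|^2)(x)$, which lets you compare directly to $f(x)=f(\varphi_x(0))$, so only the uniform continuity of $f$ enters. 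This is exactly the device the paper records separately as Lemma~\ref{Lemma_estimate_mean_oscilation} and uses in Section~\ref{Section_VMO}; you have observed that it already simplifies the proof of Proposition~\ref{Lemma_limit_BMO_lambda_UC}, and this also removes the need for the ad hoc ``for sufficiently large $\lambda$'' restrictions that the paper introduces to absorb the Berezin-transform error. Note that Proposition~\ref{Lemma_convergence_Berezin_transform_UC_functions} is still needed elsewhere in the paper (e.g.\ in Theorem~\ref{Theorem_behaviour_semi_commutator_uniformly_bounded_function}), so the overall dependency graph does not shrink much, but the present argument becomes self-contained modulo Corollary~\ref{Corollary_Schur_test_hilf_1}.

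\emph{Far region.} Both proofs need to beat the polynomial growth of $c_\lambda\sim\lambda^n$ with the exponential decay of $h(y,y)^{\lambda}$ on $\{\beta(0,y)\ge\delta\}$. The paper does this by invoking $1+\beta(0,\cdot)\in L^2(\Omega,dv)$ from \cite[Theorem~E]{BBCZ}, pulling out $(1-s)^{\lambda-p}$, and integrating against a $\lambda$-independent integrable function. You instead use Corollary~\ref{Corollary_Schur_test_hilf_1} with $\rho=1/4$ to convert $\beta(0,y)^2$ into $\frac{C^2}{\lambda}h(y,y)^{-\lambda/2}$ and then factor a fixed $h(y,y)^{\lambda_0-p}$ out of the remaining $h(y,y)^{\lambda/2-p}$; this avoids the external square-integrability input at the cost of the slightly more involved bookkeeping with $\lambda_0$. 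Both are valid; yours keeps the toolbox to results already proved inside the paper, while the paper's is quicker to state given the citation. One small point worth spelling out: the well-definedness of $\mathcal{B}_\lambda(|f-f(x)|^2)(x)$ for unbounded $f\in\textup{UC}(\Omega)$ rests on $|f(\varphi_x(y))-f(x)|\le\|f\|_{\textup{BO}}(1+\beta(0,y))$ together with $1+\beta(0,\cdot)\in L^2(\Omega,dv_\lambda)$ (which your Corollary~\ref{Corollary_Schur_test_hilf_1} argument also yields); the paper has the same implicit step. Your justification that $\eta(\delta)=\sup_{E_\delta}h(y,y)<1$ via the polar decomposition and compactness of $K$ is precisely what the paper asserts without proof as ``$h(z,z)\le 1-s$ on $\{\beta(0,z)\ge\delta\}$''.
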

\begin{proof}
Let $\frac{1}{4} > \epsilon > 0$ be fixed and choose $\delta >0$ such that $|f(z)-f(w)| < \varepsilon$ for all $z,w \in \Omega$ with $\beta(z,w) < \delta$. We divide the domain of integration into two parts: 
\begin{equation}\label{decomposition_MO_lambda}
\textup{MO}^{\lambda}(f)(x)
= c_{\lambda} \Big{\{}\int_{\beta(y,0) < \delta} +\int_{\beta(y,0) \geq \delta} \Big{\}}\big{|} f \circ \varphi_x(y) - \mathcal{B}_{\lambda}(f) \circ \varphi_x(0) \big{|}^2 h(y,y)^{\lambda-p} \, dv(y).
\end{equation}
In the case $\beta(y,0) < \delta$ we have
\begin{equation*}
\beta(\varphi_x(y),\varphi_x(0)) = \beta(y,0) < \delta 
\end{equation*}
uniformly for all $x \in \Omega$.  The uniform continuity of $f$ and Proposition \ref{Lemma_convergence_Berezin_transform_UC_functions} for sufficiently large weight parameter $\lambda$ imply then that
\begin{multline*}
| f \circ \varphi_x(y) - \mathcal{B}_{\lambda}(f) \circ \varphi_x(0)|^2\leq \\
\leq \Big{(} \big{|} f \circ \varphi_x(y)-f \circ \varphi_x(0)\big{|} + \big{|} (f- \mathcal{B}_{\lambda}(f)) \circ \varphi_x(0)\big{|}\Big{)}^2 < 4 \varepsilon^2 < \varepsilon. 
\end{multline*}
Hence  we obtain
\begin{equation*}
0 \leq \textup{MO}^{\lambda}(f)(x)\leq \varepsilon + c_{\lambda} \int_{\beta(y,0) \geq \delta}\big{|} f \circ \varphi_x(y) - \mathcal{B}_{\lambda}(f) \circ \varphi_x(0) \big{|}^2 h(y,y)^{\lambda-p} \, dv(y)=(+).
\end{equation*}
It is known (see \cite[Lemma 2.1]{BC1}  or (\ref{Inclusion_UC_BO})) that $\textup{UC}(\Omega) \subset \textup{BO}(\Omega)$ and therefore 
\begin{align}\label{MO_first_estimate}
\big{|} f \circ \varphi_x(y)- f \circ \varphi_x(0) \big{|} 
&\leq\|f\|_{\textup{BO}^p} \Big{[} 1+ \beta_p\big{(}\varphi_x(y), \varphi_x(0)\big{)}\Big{]}\\
& = \| f \|_{\textup{BO}^p}\Big{[} 1+ \beta(y,0)\Big{]}. \notag
\end{align}
\par The difference $f- \mathcal{B}_{\lambda}(f)$ is uniformly bounded on $\Omega$. According to Proposition \ref{Lemma_convergence_Berezin_transform_UC_functions}, we have  
$$\lim_{\lambda \rightarrow \infty} \|f- \mathcal{B}_{\lambda}(f) \|_{\infty}=0$$ 
and therefore it follows for all $x \in \Omega$ and sufficiently large parameter $\lambda$ that 
\begin{equation*}
| f \circ \varphi_x(y) - \mathcal{B}_{\lambda}(f) \circ \varphi_x(0)|\leq 2 \| f\|_{\textup{BO}^p} \big{[}1+ \beta(y,0) \big{]}. 
\end{equation*}
\par 
The last estimate implies that for sufficiently large  $\lambda$ and all $x \in \Omega$ we have 
\begin{equation*}
(+)\leq \varepsilon + 4c_{\lambda}\| f \|^2_{\textup{BO}^p} \int_{\beta(y,0) \geq \delta}\Big{[} 1+ \beta(y,0)\Big{]}^2 h(y,y)^{\lambda-p} \, dv(y). 
\end{equation*}
\par 
Since $g(y):= 1+ \beta(y,0)$ defines an element in $L^2(\Omega, dv)$ (see \cite[Theorem E]{BBCZ}), $c_{\lambda} \sim \lambda^{n}$ as $\lambda \rightarrow \infty$ 
(see e.g.~\cite{BC1}) and there is a constant $s > 0$ (only depending on $\delta$) such that $h(z,z) \leq 1-s$ for all $z \in \Omega$ with $\beta(z,0) \geq \delta$, it follows that
\begin{equation*}
\lim_{\lambda \rightarrow \infty} 4 c_{\lambda}\| f \|^2_{\textup{BO}^p} \int_{\beta(y,0) \geq \delta}\Big{[} 1+ \beta(y,0)\Big{]}^2 h(y,y)^{\lambda-p} \, dv(y) = 0. 
\end{equation*}
This implies that $\lim\limits_{\lambda \rightarrow \infty} \|f\|_{\textup{BMO}^{\lambda}} = 0$.
\end{proof}
By combining the previous estimates, we obtain the following {\it quantization result} on the semi-commutator of Toeplitz operators with symbols in $\textup{UC}(\Omega)$. 
\begin{theorem}\label{Theorem_behaviour_semi_commutator_uniformly_bounded_function}
Let $f \in \textup{UC}(\Omega)$. Then $\lim\limits_{\lambda \to \infty} \|H_f^{\lambda}\|_{\lambda} = 0$. 
In particular,  
\begin{equation}\label{limit_semi_commutator}
\lim_{\lambda \rightarrow \infty} \big{\|}T_f^{\lambda} T_g^{\lambda}- T_{fg}^{\lambda} \big{\|}_{\lambda}=0 
\end{equation}
for all $g \in L^{\infty}(\Omega)$ or all $g \in \textup{UC}(\Omega)$. 
\end{theorem}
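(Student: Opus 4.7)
The plan is to first establish the Hankel statement $\lim_{\lambda \to \infty}\|H_f^{\lambda}\|_{\lambda} = 0$ for $f \in \textup{UC}(\Omega)$; the semi-commutator assertion then follows immediately from \eqref{Semi_commutator_estimate}.

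For the Hankel part, the idea is to split off the Berezin transform of $f$, exploiting simultaneously that it approximates $f$ in sup-norm and that it has small $\textup{BO}^{\lambda}$-oscillation. Concretely, on the common invariant domain $\mathcal{D}$ of Lemma \ref{Lemma_Abbildungseigenschaften_TO} one writes
\[
H_f^{\lambda} \;=\; H_{f - \mathcal{B}_{\lambda}(f)}^{\lambda} \;+\; H_{\mathcal{B}_{\lambda}(f)}^{\lambda}.
\]
For the first summand, Proposition \ref{Lemma_convergence_Berezin_transform_UC_functions} ensures $f - \mathcal{B}_{\lambda}(f) \in L^{\infty}(\Omega)$ with sup-norm tending to zero, so the elementary bound $\|H_g^{\lambda}\|_{\lambda} \leq \|g\|_{\infty}$ (valid for any bounded $g$ since $\|I-P_{\lambda}\| \leq 1$) gives $\|H_{f-\mathcal{B}_{\lambda}(f)}^{\lambda}\|_{\lambda} \to 0$. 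For the second summand, Proposition \ref{Lemma_limit_BMO_lambda_UC} yields $\|f\|_{\textup{BMO}^{\lambda}} \to 0$, so in particular $f \in \textup{BMO}^{\lambda}(\Omega)$ for all sufficiently large $\lambda$. Theorem \ref{Theorem_Estimate_BMO_BO} then places $\mathcal{B}_{\lambda}(f)$ in $\textup{BO}^{\lambda}(\Omega)$ with $\|\mathcal{B}_{\lambda}(f)\|_{\textup{BO}^{\lambda}} \leq 2\|f\|_{\textup{BMO}^{\lambda}}$, and for $\lambda > 4p$ Proposition \ref{estimate_norm_Hankel_operator} upgrades this to $\|H_{\mathcal{B}_{\lambda}(f)}^{\lambda}\|_{\lambda} \leq 2C\|f\|_{\textup{BMO}^{\lambda}}$, which is precisely the chain \eqref{inequality_operator_norm_weighted_Hankel_operator}. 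Since the right-hand side vanishes in the limit, combining the two contributions gives $\|H_f^{\lambda}\|_{\lambda} \to 0$.

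For the semi-commutator limit I would then apply \eqref{Semi_commutator_estimate} to obtain $\|T_f^{\lambda}T_g^{\lambda}-T_{fg}^{\lambda}\|_{\lambda} \leq \|H_{\overline{f}}^{\lambda}\|_{\lambda}\,\|H_g^{\lambda}\|_{\lambda}$. Since $\overline{f} \in \textup{UC}(\Omega)$, the first factor tends to $0$ by the first part of the theorem. If $g \in L^{\infty}(\Omega)$, the second factor is uniformly bounded by $\|g\|_{\infty}$; if instead $g \in \textup{UC}(\Omega)$, it also tends to zero by the first part applied to $g$. In either case the product vanishes in the limit, as claimed.

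The main obstacle is conceptual rather than computational: one has to be careful to read the decomposition of $H_f^{\lambda}$ above as an identity of densely defined operators on $\mathcal{D}$, because $H_f^{\lambda}$ is not a priori bounded when $f$ is unbounded. The uniform estimates on the two summands show a posteriori that $H_f^{\lambda}$ does extend to a bounded operator for all sufficiently large $\lambda$, with operator norm tending to $0$. All the substantive analytic content — the Schur-test bound in Proposition \ref{estimate_norm_Hankel_operator}, the $\textup{BMO}$-to-$\textup{BO}$ passage in Theorem \ref{Theorem_Estimate_BMO_BO}, and the two asymptotic Propositions \ref{Lemma_convergence_Berezin_transform_UC_functions} and \ref{Lemma_limit_BMO_lambda_UC} — is already available, so the remaining proof is essentially an assembly of these ingredients.
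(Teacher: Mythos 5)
Your proposal is correct and follows essentially the same route as the paper: split $H_f^{\lambda}$ as $H_{f-\mathcal{B}_{\lambda}(f)}^{\lambda} + H_{\mathcal{B}_{\lambda}(f)}^{\lambda}$, bound the first summand by $\|f-\mathcal{B}_{\lambda}(f)\|_{\infty} \to 0$ via Proposition~\ref{Lemma_convergence_Berezin_transform_UC_functions}, and the second by (\ref{inequality_operator_norm_weighted_Hankel_operator}) combined with Proposition~\ref{Lemma_limit_BMO_lambda_UC}, then invoke (\ref{Semi_commutator_estimate}) for the semi-commutator. Your additional remark about reading the decomposition as an identity of densely defined operators on $\mathcal{D}$ is a helpful clarification that the paper leaves implicit.
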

\begin{proof}
According to (\ref{Semi_commutator_estimate}), it is sufficient to show that $\lim_{\lambda \rightarrow \infty} \| H_f^{\lambda}\|_{\lambda}=0$. We use the estimate: 
\begin{align*}
\|H_f^{\lambda} \|_{\lambda} 
& \leq \big{\|} H_{f-\mathcal{B}_{\lambda}(f)}^{\lambda}\big{\|}_{\lambda} + \| H_{\mathcal{B}_{\lambda}(f)}^{\lambda} \big{\|}_{\lambda} 
 \leq \big{\|} f- \mathcal{B}_{\lambda}(f)\big{\|}_{\infty} + \| H_{\mathcal{B}_{\lambda}(f)}^{\lambda} \big{\|}_{\lambda}. 
\end{align*}
\par 
From Proposition \ref{Lemma_convergence_Berezin_transform_UC_functions} we conclude that the first summand on the right-hand side tends to zero as $\lambda \to \infty$. Proposition 
 \ref{Lemma_limit_BMO_lambda_UC} together with estimate (\ref{inequality_operator_norm_weighted_Hankel_operator})
 implies that $\lim\limits_{\lambda \to \infty}  \|H_{\mathcal{B}_{\lambda}(f)}^{\lambda} \big{\|}_{\lambda} =~0$, and the assertion follows. 
\end{proof}
%}
\begin{remark}\label{remark_main_result_1}
\textup{
As was previously mentioned, the space $\textup{UC}(\Omega)$ contains unbounded functions.  In \cite[Theorem 3.8]{BC0} the following equivalence is shown for Toeplitz operators 
with symbols in $\textup{UC}(\Omega)$:  
\begin{equation*}
\mbox{\it $T_f$ is bounded, $f \in \textup{UC}(\Omega)$} \hspace{3ex} \Longleftrightarrow \hspace{3ex} f \in \textup{UC}(\Omega) \cap L^{\infty}(\Omega).
\end{equation*}}
\textup{ 
According to (\ref{Inclusion_UC_BO}), we have the inclusions
\begin{equation*}
\textup{UC}(\Omega) \subset \textup{BO}^{\lambda}(\Omega) \subset \textup{BMO}^{\lambda}(\Omega). 
\end{equation*} 
Moreover, $H_f^{\lambda}$ is bounded in case of $f\in \textup{BMO}^{\lambda}(\Omega)$. In particular, the Hankel operator $H_f^{\lambda}$ with uniformly continuous symbol $f$ is bounded. 
Therefore the semi-commutators in (\ref{limit_semi_commutator}) are bounded although each single Toeplitz operator may be unbounded (cf.~Section \ref{UC_products}).}
\vspace{1ex}\par 
\textup{
If one prefers to deal with bounded Toeplitz operators, one may choose the symbols from the space ($C^*$-algebra) $ \textup{BUC}(\Omega)$ of bounded $\beta$-uniformly continuous functions on $\Omega$. Note that 
\begin{equation*}
C(\overline{\Omega}) \subsetneq \textup{BUC}(\Omega). 
\end{equation*}
In this case a stronger version of Theorem \ref{Theorem_behaviour_semi_commutator_uniformly_bounded_function} holds true:
\begin{corollary}
Let $f_1, \cdots f_m \in \textup{BUC}(\Omega)$. Then we have 
\begin{equation*}
\lim_{\lambda \rightarrow \infty}\|T_{f_1}^{\lambda} T_{f_2}^{\lambda} \cdots T_{f_m}^{\lambda} - T_{f_1f_2 \cdots f_m}^{\lambda}\|_{\lambda} =0. 
\end{equation*}
\end{corollary}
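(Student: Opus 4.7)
The plan is to proceed by induction on $m$. The case $m=1$ is trivial and the case $m=2$ is precisely Theorem \ref{Theorem_behaviour_semi_commutator_uniformly_bounded_function}, since $\textup{BUC}(\Omega)\subset \textup{UC}(\Omega)\cap L^{\infty}(\Omega)$ and by Remark \ref{remark_main_result_1} each $T_{f_j}^{\lambda}$ is then bounded with $\|T_{f_j}^{\lambda}\|_{\lambda}\leq \|f_j\|_{\infty}$ uniformly in $\lambda$.

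For the induction step, assume the assertion holds for $m-1$ factors and write the telescoping decomposition
\begin{align*}
T_{f_1}^{\lambda} T_{f_2}^{\lambda} \cdots T_{f_m}^{\lambda} - T_{f_1f_2\cdots f_m}^{\lambda}
&= T_{f_1}^{\lambda}\Big(T_{f_2}^{\lambda}\cdots T_{f_m}^{\lambda}- T_{f_2\cdots f_m}^{\lambda}\Big) \\
&\quad + \Big(T_{f_1}^{\lambda} T_{f_2 \cdots f_m}^{\lambda}- T_{f_1 f_2 \cdots f_m}^{\lambda}\Big).
\end{align*}
Since $\textup{BUC}(\Omega)$ is a $C^*$-algebra with respect to pointwise multiplication, the product $f_2\cdots f_m$ again belongs to $\textup{BUC}(\Omega)$; hence the second summand tends to zero in operator norm by Theorem \ref{Theorem_behaviour_semi_commutator_uniformly_bounded_function} applied to the pair $(f_1, f_2\cdots f_m)\in \textup{BUC}(\Omega)^2$. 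For the first summand, boundedness of $f_1$ gives the uniform estimate $\|T_{f_1}^{\lambda}\|_{\lambda}\leq \|f_1\|_{\infty}$, and the inductive hypothesis forces
\[\big\|T_{f_2}^{\lambda}\cdots T_{f_m}^{\lambda}- T_{f_2\cdots f_m}^{\lambda}\big\|_{\lambda} \longrightarrow 0\]
as $\lambda\to \infty$. The product of a uniformly bounded factor with a norm-null factor is norm-null, finishing the induction.

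There is no real obstacle here: the only subtlety is ensuring that all operators appearing in the telescoping are in fact bounded with a uniform (in $\lambda$) control. This is exactly guaranteed by passing from $\textup{UC}(\Omega)$ to $\textup{BUC}(\Omega)$, which supplies both the algebra structure needed for the induction (so the partial products $f_2\cdots f_m$ remain admissible symbols) and the uniform bound $\|T_{f_j}^{\lambda}\|_{\lambda}\leq \|f_j\|_{\infty}$ needed to absorb the prefactor $T_{f_1}^{\lambda}$. The domain considerations from Section \ref{UC_products} become superfluous in this bounded setting.
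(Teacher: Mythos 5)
Your proof is correct and fills in exactly what the paper's one-line proof ("Use Theorem \ref{Theorem_behaviour_semi_commutator_uniformly_bounded_function} and standard estimates") leaves to the reader: the telescoping decomposition, the induction, the uniform bound $\|T_{f_1}^{\lambda}\|_{\lambda}\leq\|P_{\lambda}\|\,\|M_{f_1}\|=\|f_1\|_{\infty}$, and the closure of $\textup{BUC}(\Omega)$ under products so that $f_2\cdots f_m$ is again an admissible symbol. No gap, and no genuinely different route from what the paper intends.
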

\begin{proof}
Use Theorem \ref{Theorem_behaviour_semi_commutator_uniformly_bounded_function} and standard estimates.
\end{proof}
}
\end{remark}

We draw some further conclusions and comment on the compactness of semi-commutators. 
\begin{lemma} \label{le:intersection}
Let $\Omega$ be a BSD  in $\mathbb{C}^n$, then 
$\textup{VMO}_{\partial}(\Omega) \cap \textup{UC}(\Omega) = \textup{VO}_{\partial}(\Omega).$
\end{lemma}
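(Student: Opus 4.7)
\noindent\textbf{Plan for the forward inclusion $\textup{VO}_{\partial}(\Omega) \subseteq \textup{VMO}_{\partial}(\Omega) \cap \textup{UC}(\Omega)$.} Let $f \in \textup{VO}_{\partial}(\Omega)$; I verify the two membership conditions separately. For $\textup{UC}(\Omega)$: given $\varepsilon > 0$, I pick a compact set $K \subset \Omega$ with $\textup{Osc}_z^p(f) < \varepsilon$ on $\Omega \setminus K$, form the closed $\beta$-neighborhood $K_1 := \{z : \beta(z, K) \leq 1\}$, which is compact since the Bergman metric is complete with compact closed balls (Hopf--Rinow). The restriction $f|_{K_1}$ is then uniformly $\beta$-continuous, yielding some $\delta_1 < 1$; taking $\delta := \delta_1$, I split any pair $z,w$ with $\beta(z,w) < \delta$ into the case that at least one of $z, w$ lies in $K$ (forcing both into $K_1$) and the case that neither does (where $|f(z) - f(w)| \leq \textup{Osc}_z^p(f) < \varepsilon$). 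For $\textup{VMO}_\partial(\Omega)$: starting from the minimisation inequality $\textup{MO}(f)(z) \leq \int_{\Omega} |f(u) - f(z)|^2\, |k_z^p(u)|^2\, dv_p(u)$, I split at a large $\beta$-radius $R$. On $B_\beta(z,R)$ a chaining argument along a $\beta$-geodesic (as in the proof of Lemma \ref{GL_estimate_global_BO}) gives $|f(u) - f(z)| \leq (R+2)\sup_{v \in B_\beta(z,R)}\textup{Osc}^p_v(f)$, and the supremum vanishes as $z \to \partial\Omega$ since every $v \in B_\beta(z,R)$ satisfies $\beta(0,v) \geq \beta(0,z) - R \to \infty$. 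On the complement, I use the global bound of Lemma \ref{GL_estimate_global_BO} together with the pullback identity (see below) to reduce the tail to the $z$-independent integral
\[\int_{\beta(0,v)\geq R}(1+\beta(0,v))^2\,dv(v),\]
which vanishes as $R \to \infty$ by the integrability result of \cite{BBCZ}.

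\noindent\textbf{Plan for the reverse inclusion $\textup{VMO}_{\partial}(\Omega) \cap \textup{UC}(\Omega) \subseteq \textup{VO}_{\partial}(\Omega)$.} I argue by contradiction. Assume $f$ lies in the intersection yet $\textup{Osc}^p_{z_n}(f) \geq \varepsilon_0 > 0$ along some $z_n \to \partial\Omega$, with witnesses $w_n$ satisfying $\beta(z_n,w_n) < 1$ and $|f(z_n) - f(w_n)| \geq \varepsilon_0/2$. Writing $\alpha_n := \mathcal{B}_p(f)(z_n)$, the triangle inequality forces at least one of $|f(z_n) - \alpha_n|, |f(w_n) - \alpha_n|$ to be $\geq \varepsilon_0/4$; call the corresponding base point $\zeta_n \in \{z_n, w_n\}$. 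By uniform continuity, I fix $\delta \in (0,1)$ so that $\beta(u,\zeta_n) < \delta$ implies $|f(u) - f(\zeta_n)| < \varepsilon_0/8$, and hence $|f(u) - \alpha_n| \geq \varepsilon_0/8$ on all of $B_\beta(\zeta_n,\delta)$. Consequently
\[\textup{MO}(f)(z_n) \geq \Big(\frac{\varepsilon_0}{8}\Big)^{\!2} \int_{B_\beta(\zeta_n,\delta)} |k_{z_n}^p(u)|^2\, dv_p(u) = \Big(\frac{\varepsilon_0}{8}\Big)^{\!2} \int_{B_\beta(\varphi_{z_n}(\zeta_n),\delta)} dv,\]
where the equality uses the pullback identity below and the fact that $\varphi_{z_n}$ is an involutive $\beta$-isometry with $\varphi_{z_n}(z_n) = 0$, so that $\beta(\varphi_{z_n}(\zeta_n),0) = \beta(\zeta_n,z_n) \leq 1$ and hence $\varphi_{z_n}(\zeta_n) \in \overline{B_\beta(0,1)}$. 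Since the continuous positive function $w \mapsto \int_{B_\beta(w,\delta)} dv$ attains a positive minimum $c(\delta) > 0$ on the compactum $\overline{B_\beta(0,1)}$, this yields $\textup{MO}(f)(z_n) \geq (\varepsilon_0/8)^2 c(\delta) > 0$, contradicting $\textup{MO}(f)(z_n) \to 0$.

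\noindent\textbf{Main obstacle.} Both directions hinge on the pullback identity
\[\int_{\Omega} F(u)\,|k_z^p(u)|^2\,dv_p(u) = \int_{\Omega} F(\varphi_z(v))\,dv(v),\]
which is a direct consequence of the transformation formulas $h(z,\varphi_z(v)) = h(z,z)/h(z,v)$ and $dv(\varphi_z(v)) = (h(z,z)/|h(z,v)|^2)^p\,dv(v)$ already recorded in the proof of Proposition \ref{estimate_norm_Hankel_operator} (together with $c_p = 1$, so $dv_p = dv$). This identity is the only non-routine ingredient: it reduces both the tail decay of $\int(1+\beta(z,\cdot))^2|k_z^p|^2\,dv_p$ and the uniform lower bound on Berezin mass of small $\beta$-balls to $z$-independent statements about integrability and positivity on a compact set, after which the oscillation manipulations and the Hopf--Rinow compactness of closed $\beta$-balls handle the rest.
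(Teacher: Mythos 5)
Your proposal is correct, but it takes a genuinely different route from the paper. The paper proves $\textup{VO}_{\partial}(\Omega) \subset \textup{UC}(\Omega)$ directly (as you do, though by a slightly different case split) and then invokes two structural results to finish: by \cite[Theorem~B]{BBCZ} one has $\textup{VMO}_{\partial}(\Omega) = \textup{VO}_{\partial}(\Omega) + \mathcal{J}$, where $\mathcal{J}$ is the ideal of symbols with compact Toeplitz operator, which in particular gives the inclusion $\textup{VO}_{\partial}(\Omega) \subset \textup{VMO}_{\partial}(\Omega)$ for free; and by \cite[Theorem~3.8]{BC0} one has $\mathcal{J} \cap \textup{UC}(\Omega) = C_0(\Omega) \subset \textup{VO}_{\partial}(\Omega)$, which yields the reverse inclusion as $\textup{VMO}_{\partial} \cap \textup{UC} = \textup{VO}_{\partial} + (\mathcal{J}\cap\textup{UC}) = \textup{VO}_{\partial}$. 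You instead prove both inclusions from scratch: for $\textup{VO}_{\partial}\subset\textup{VMO}_{\partial}$ you bound $\textup{MO}(f)(z)$ by splitting at Bergman radius $R$, chaining along geodesics on the inner ball and using the integrability of $(1+\beta(0,\cdot))^2$ (already from \cite[Theorem~E]{BBCZ}) plus the M\"obius pullback identity on the tail; for the reverse inclusion you argue by contradiction, using uniform continuity to manufacture a small $\beta$-ball on which $|f-\mathcal{B}_p(f)(z_n)|$ is bounded below, and the same pullback identity to show its Berezin mass stays bounded away from zero because $\varphi_{z_n}(\zeta_n)$ stays in the compact closed unit $\beta$-ball. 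What the paper's route buys is brevity, by reusing the heavy machinery of \cite{BBCZ,BCZ,BC0}; what yours buys is a self-contained and more transparent argument that exhibits directly how uniform continuity forces the mean-oscillation profile to mirror the pointwise-oscillation profile, without passing through the $\textup{VO}_{\partial}+\mathcal{J}$ decomposition or any compactness characterization of Hankel or Toeplitz operators. All the ingredients you cite (the transformation formulas for $h$ and $dv\circ\varphi_z$, the global $\textup{BO}$ estimate of Lemma~\ref{GL_estimate_global_BO}, completeness of the Bergman metric) are available in the paper, so the proof goes through.
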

\begin{proof}
Let $f \in \textup{VO}_{\partial}(\Omega)$. Given $\varepsilon >0$, select a compact subset $K$ of $\Omega$ such that for each $z \in \Omega \setminus K$ we have that 
$$\text{Osc}^p_z(f) = \sup\big{\{}|f(z) - f(w)|\,: \: w \in \Omega, \;  \beta(z,w) < 1\big{\}} < \varepsilon.$$ 
The restriction $f|_K$ is obviously uniformly continuous. Thus we can find $\delta$, which depends on the given $\varepsilon$ so that $|f(z) - f(w)| < \varepsilon$ whenever $\beta(z,w) < \delta$.
That is $\textup{VO}_{\partial}(\Omega) \subset \textup{UC}(\Omega)$.

Furthermore, by \cite[Theorem B]{BBCZ}, 
\[\textup{VMO}_{\partial}(\Omega) = \textup{VO}_{\partial}(\Omega) + \mathcal{J},\]
where, by \cite[pages 940 and 944]{BCZ}, $\mathcal{J}$ consists of all functions $g \in \textup{VMO}_{\partial}(\Omega)$ such that the Toeplitz operator $T_g$ is compact. 
Now, $\textup{VO}_{\partial}(\Omega) \subset \textup{UC}(\Omega)$ implies that
\begin{equation*}
\textup{VMO}_{\partial}(\Omega) \cap \textup{UC}(\Omega) = \textup{VO}_{\partial}(\Omega) + \mathcal{J} \cap \textup{UC}(\Omega).
\end{equation*}
By \cite[Theorem 3.8]{BC0}, $\mathcal{J} \cap \textup{UC}(\Omega) = C_0(\Omega) \subset \textup{VO}_{\partial}(\Omega)$, which finishes the proof.
\end{proof}

Consider the function space
$$\Gamma := \big{\{} f \in L^{\infty}(\Omega)\, : \ T_g^{\lambda}T_f^{\lambda} -T_{gf}^{\lambda} \ \ \text{\it is compact} \ \text{\it for all} \ g \in L^{\infty}(\Omega) \big{\}}.$$

We summarize the results of \cite[Theorem B, Section 9]{BBCZ}, \cite[Theorem A, Proposition 1]{BCZ}, and \cite[Proposition 6]{Zhu} in the following statement. 
Note that, although the paper \cite{Zhu} is devoted to the case of the unit disk, the result of its Proposition 6 remains valid for the case of a general bounded symmetric domain $\Omega$. 
\begin{theorem}
 Let $f \in L^{\infty}(\Omega)$. Then the following statements are equivalent:
 \begin{enumerate}
  \item $f \in \Gamma \cap \overline{\Gamma}$,
  \item $f \in \textup{VO}_{\partial}(\Omega) + \mathcal{J}$,
  \item $H_f$ and $H_{\overline{f}}$ are compact,
  \item $[P_{\lambda}, M_f]$ is compact,
  \item $T_f^{\lambda}T_{\overline{f}}^{\lambda} -T_{|f|^2}^{\lambda}$ and $T_{\overline{f}}^{\lambda}T_f^{\lambda} -T_{|f|^2}^{\lambda}$ are compact.
 \end{enumerate}
\end{theorem}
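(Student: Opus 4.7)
The plan is to establish the equivalences by reducing everything to compactness of the pair of Hankel operators $H_f$ and $H_{\overline{f}}$. The cleanest route is to prove (3) $\Leftrightarrow$ (4), (3) $\Leftrightarrow$ (5), and (3) $\Leftrightarrow$ (1) by direct algebraic manipulation, and then invoke the cited results from \cite{BBCZ} and \cite{BCZ} for the delicate equivalence (2) $\Leftrightarrow$ (3).

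First I would record two identities, valid by direct computation on $L^2(\Omega, dv_\lambda) = \mathcal{A}_\lambda^2(\Omega) \oplus \mathcal{A}_\lambda^2(\Omega)^\perp$: the already noted semi-commutator formula $T_f^\lambda T_g^\lambda - T_{fg}^\lambda = -(H_{\overline{f}}^\lambda)^*H_g^\lambda$, and the block decomposition $[P_\lambda,M_f] = (H_{\overline{f}}^\lambda)^* - H_f^\lambda$, where the two terms are the two off-diagonal blocks of $[P_\lambda,M_f]$ with respect to the orthogonal splitting. From the latter identity, (3) $\Rightarrow$ (4) is immediate, and (4) $\Rightarrow$ (3) follows by restricting the compact operator $[P_\lambda,M_f]$ to each of the two subspaces to recover $H_f^\lambda$ and $(H_{\overline{f}}^\lambda)^*$ as compact blocks. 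For (3) $\Leftrightarrow$ (5), the specializations $T_{\overline{f}}^\lambda T_f^\lambda - T_{|f|^2}^\lambda = -(H_f^\lambda)^*H_f^\lambda$ and $T_f^\lambda T_{\overline{f}}^\lambda - T_{|f|^2}^\lambda = -(H_{\overline{f}}^\lambda)^*H_{\overline{f}}^\lambda$ do the job, once we recall that a bounded operator $T$ is compact if and only if $T^*T$ is. For (3) $\Leftrightarrow$ (1), in the direction $\Rightarrow$ the semi-commutator identity expresses $T_g^\lambda T_f^\lambda - T_{gf}^\lambda$ and $T_g^\lambda T_{\overline{f}}^\lambda - T_{g\overline{f}}^\lambda$ as products of a bounded operator and a compact one, hence compact for every $g \in L^\infty(\Omega)$; conversely, taking $g = \overline{f}$ in the definition of $\Gamma$ and $g = f$ in the definition of $\overline{\Gamma}$ forces $(H_f^\lambda)^*H_f^\lambda$ and $(H_{\overline{f}}^\lambda)^*H_{\overline{f}}^\lambda$ to be compact, returning us to (3).

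The nontrivial step is (2) $\Leftrightarrow$ (3), and this is where I would quote rather than reprove. The key external input is the Berger--Coburn--Zhu type characterization \cite[Theorem B]{BBCZ} stating that $f \in L^\infty(\Omega)$ satisfies compactness of both $H_f^\lambda$ and $H_{\overline{f}}^\lambda$ if and only if $f \in \textup{VMO}_\partial(\Omega)$, together with the structural decomposition $\textup{VMO}_\partial(\Omega) = \textup{VO}_\partial(\Omega) + \mathcal{J}$ recalled in Lemma \ref{le:intersection}. Combining these two facts yields the equivalence directly: a symbol produces two compact Hankel operators exactly when it decomposes as a sum of a function with vanishing oscillation at the boundary and a symbol whose Toeplitz operator is compact.

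The main obstacle in carrying out this plan is not any of the algebraic equivalences, which are essentially formal, but the faithful citation and setup for (2) $\Leftrightarrow$ (3): one must make sure that the characterization of compact Hankel operators from \cite{BBCZ} is stated in the same normalization of weight $\lambda$ used here, and that the ideal $\mathcal{J}$ is defined consistently across \cite{BBCZ,BCZ,Zhu}. Once this bookkeeping is in place, the remaining work is to chain the equivalences (1) $\Leftrightarrow$ (3) $\Leftrightarrow$ (4) $\Leftrightarrow$ (5) through the two algebraic identities above, and attach (2) to (3) via the cited results.
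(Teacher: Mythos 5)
Your proposal is correct. The paper itself gives no derivation for this theorem — it states that the equivalences are a "summary" of \cite[Theorem B, Section 9]{BBCZ}, \cite[Theorem A, Proposition 1]{BCZ}, and \cite[Proposition 6]{Zhu} — and your outline fills in exactly what that summary leaves implicit: the elementary equivalences (1) $\Leftrightarrow$ (3) $\Leftrightarrow$ (4) $\Leftrightarrow$ (5) via the block decomposition of $[P_\lambda,M_f]$ and the identity $T_f^\lambda T_g^\lambda - T_{fg}^\lambda = -(H_{\overline{f}}^\lambda)^*H_g^\lambda$, and the nontrivial step (2) $\Leftrightarrow$ (3) via the cited compactness characterization of Hankel operators together with the decomposition $\textup{VMO}_\partial(\Omega) = \textup{VO}_\partial(\Omega) + \mathcal{J}$. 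The algebra checks out (in particular $(H_{\overline{f}}^\lambda)^*u = P_\lambda M_f u$ for $u \perp \mathcal{A}_\lambda^2(\Omega)$, so the block identity for $[P_\lambda,M_f]$ is as you wrote it), so this is the same route the paper intends, just made explicit.
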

 
Thus the operator-theoretic version of Lemma \ref{le:intersection} reads as follows.
\begin{corollary}
 Let $f \in \textup{BUC}(\Omega)$. Then \textup{(i)} and \textup{(ii)} are equivalent: 
 \begin{itemize}
 \item[\textup{(i)}]  Both semi-commutators $T_f^{\lambda}T_g^{\lambda} -T_{fg}^{\lambda}$ and $T_g^{\lambda}T_f^{\lambda} -T_{gf}^{\lambda}$ 
 are compact for all $g \in L^{\infty}(\Omega)$, 
 \item[\textup{(ii)}]  $f \in \textup{VO}_{\partial}(\Omega)$.
 \end{itemize}
\end{corollary}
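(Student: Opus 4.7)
The plan is to read the corollary as the $\textup{BUC}(\Omega)$-version of the equivalence $(1)\Leftrightarrow(2)$ in the theorem immediately preceding it, combined with Lemma \ref{le:intersection}. The key bookkeeping step is to identify statement (i) with the membership condition $f\in \Gamma\cap\overline{\Gamma}$.

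First I would unpack (i). By definition of $\Gamma$, the second semi-commutator condition $T_g^{\lambda}T_f^{\lambda} - T_{gf}^{\lambda}$ compact for all $g \in L^{\infty}(\Omega)$ is exactly $f\in \Gamma$. For the first condition, I take the Hilbert space adjoint: since $T_h^* = T_{\overline{h}}$ and compactness is preserved under $*$, the compactness of $T_f^{\lambda}T_g^{\lambda}-T_{fg}^{\lambda}$ for every $g\in L^{\infty}(\Omega)$ is equivalent (by the substitution $g\mapsto \overline{g}$) to compactness of $T_g^{\lambda}T_{\overline{f}}^{\lambda}-T_{g\overline{f}}^{\lambda}$ for every $g\in L^{\infty}(\Omega)$, which is precisely $\overline{f}\in\Gamma$, i.e.\ $f\in\overline{\Gamma}$. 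Hence (i) is equivalent to $f\in \Gamma\cap\overline{\Gamma}$.

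Next, I invoke the equivalence $(1)\Leftrightarrow(2)$ of the preceding theorem, which gives
\[
f\in \Gamma\cap\overline{\Gamma} \;\Longleftrightarrow\; f\in \textup{VO}_{\partial}(\Omega)+\mathcal{J}.
\]
The decomposition $\textup{VMO}_{\partial}(\Omega)=\textup{VO}_{\partial}(\Omega)+\mathcal{J}$ used in the proof of Lemma \ref{le:intersection} identifies the right-hand side with $\textup{VMO}_{\partial}(\Omega)$. Thus (i) holds iff $f\in\textup{VMO}_{\partial}(\Omega)$.

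Finally, since by hypothesis $f\in \textup{BUC}(\Omega)\subset \textup{UC}(\Omega)$, Lemma \ref{le:intersection} yields
\[
f\in \textup{VMO}_{\partial}(\Omega)\;\Longleftrightarrow\; f\in \textup{VMO}_{\partial}(\Omega)\cap \textup{UC}(\Omega) = \textup{VO}_{\partial}(\Omega),
\]
which is (ii). The converse direction $\textup{(ii)}\Rightarrow\textup{(i)}$ is immediate from the same chain, using the trivial inclusion $\textup{VO}_{\partial}(\Omega)\subset \textup{VMO}_{\partial}(\Omega)=\textup{VO}_{\partial}(\Omega)+\mathcal{J}$.

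There is no real obstacle: the corollary is a formal consequence of results already collected above. The only step requiring care is the adjoint manipulation identifying the first semi-commutator condition with membership in $\overline{\Gamma}$; once this bookkeeping is in place, the result follows by concatenating the preceding theorem with Lemma \ref{le:intersection}.
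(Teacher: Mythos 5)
Your proof is correct and is precisely the argument the paper has in mind when it calls this corollary ``the operator-theoretic version of Lemma \ref{le:intersection}'': you correctly identify condition (i) with $f\in\Gamma\cap\overline{\Gamma}$ via the adjoint $T_h^{*}=T_{\overline{h}}$, pass through items (1) and (2) of the preceding theorem and the decomposition $\textup{VMO}_{\partial}(\Omega)=\textup{VO}_{\partial}(\Omega)+\mathcal{J}$, and conclude with Lemma \ref{le:intersection} using $\textup{BUC}(\Omega)\subset\textup{UC}(\Omega)\cap L^{\infty}(\Omega)$. No gaps.
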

We mention (cf. \cite[page 924]{BCZ}) that, in the case of $\Omega= \mathbb{B}^n$, the algebra $C(\overline{\mathbb{B}^n})$ is a subset of $\textup{VO}_{\partial}(\mathbb{B}^n)$, and that this inclusion fails for higher rank domains. That is, in the classical situation of $\Omega= \mathbb{B}^n$ and $f \in C(\overline{\mathbb{B}^n})$, the  semi-commutator 
\begin{equation*}
T_f^{\lambda} T_g^{\lambda}-T_{fg}^{\lambda}
\end{equation*}
of Theorem \ref{Theorem_behaviour_semi_commutator_uniformly_bounded_function} is compact for each $g \in L^{\infty}(\mathbb{B}^n)$ and all $\lambda >p-1=n$.
However, in case of operator symbols from $\textup{BUC}(\Omega)$ such compactness does not need to be true.

\begin{example}
Let $\Omega = \mathbb{D}$ be the unit disk. Given a point $t_0$ on the unit circle $S^1 =\partial \mathbb{D}$, let $\ell_{t_0}$ be the arc on $S^1$ with endpoint $t_0$ and $-t_0$. Define then the function 
\begin{equation*}
 f_{t_0}(z) = 2 \omega(z,\ell_{t_0},\mathbb{D}) -1,\qquad z \in \mathbb{D},
\end{equation*}
where 
\begin{equation*}
 \omega(z,\ell_{t_0},\mathbb{D}) = \int_{\ell_{t_0}} \frac{1 - |z|^2}{|e^{i\theta} - z|^2} \frac{d\theta}{2\pi}
\end{equation*}
is the harmonic measure of $\ell_{t_0}$ at $z$ in $\mathbb{D}$, cf. \cite[page 5]{GarMar}. The function $f_{\ell_{t_0}}$, being bounded and harmonic, belongs to $\textup{BUC}(\mathbb{D})$.
At the same time $f_{\ell_{t_0}}$ is a \emph{boundary piecewise continuous} function in the sense of \cite[Section 5]{Va07}. 

Theorem 5.1 of \cite{Va07} describes the quotient algebra of the algebra generated by Toeplitz operators with boundary piecewise continuous symbols modulo compact operators. This description 
implies that the self semi-commutator $T_{f_{t_0}}^{\lambda} T_{f_{t_0}}^{\lambda}-T_{f_{t_0}^2}^{\lambda}$ is not compact, while for any point  $t_1 \in S^1$ different from $\pm t_0$, the semi-commutator 
$T_{f_{t_0}}^{\lambda} T_{f_{t_1}}^{\lambda}-T_{f_{t_0}f_{t_1}}^{\lambda}$ is compact. 

Note that the paper \cite{Va07} deals with the classical Bergman space {\rm(}$\lambda = p${\rm )}, but the results therein remain valid (with correspondent adjustments in formulas) for each weighted 
Bergman space $\mathcal{A}^2_{\lambda}(\mathbb{D})$ with $\lambda > p-1$.
\end{example}
\par 
The following example shows that Theorem \ref{Theorem_behaviour_semi_commutator_uniformly_bounded_function} may fail for bounded symbols  being not continuous in $\Omega$ 
(cf.~\cite[Example 5.1]{BC-1}). 

\begin{example}\label{counterexample_fast_oscillating_symbols}
We consider the unit disc $\Omega = \mathbb{D}$ in $\mathbb{C}$. Let $f \in L^{\infty}(\mathbb{D})$ be defined by
\[f(z) := \begin{cases} 1 & \text{if } z = 0, \\ e^{i|z|^{-2}} & \text{if } z \neq 0. \end{cases}\]
Since $\bar{f}f = 1$, we get $T_{\bar{f}f}^{\lambda} = \textup{Id}$ for all $\lambda = 2+\alpha$, where $\alpha > -1$ (see below). We will now show that $T_f^{\lambda}1 \to 0$ as $\lambda \to \infty$, 
which contradicts the statement of Theorem \ref{Theorem_behaviour_semi_commutator_uniformly_bounded_function}. Since $f$ is radially symmetric, \cite[Theorem 3.1]{GKV} implies
\begin{equation}\label{first_integral}
(T_f^{\lambda}1)(z) = (\alpha+1)\int_0^1 e^{ir^{-1}} (1-r)^{\alpha} \, dr = \frac{\alpha+1}{\alpha}\int_{\frac{1}{\alpha}}^\infty e^{i\alpha x} \left(1-\frac{1}{\alpha x}\right)^{\alpha} \frac{1}{x^2} \, dx =(+),
\end{equation}
where we have used the substitution $x := \frac{1}{\alpha}r^{-1}$ (for $\alpha \geq 1$, say).
We change variables again and put $y := x + \frac{\pi}{\alpha}$. This yields
\begin{equation} \label{second_integral}
(+)=-\frac{\alpha+1}{\alpha}\int_{\frac{1+\pi}{\alpha}}^\infty e^{i\alpha y} \left(1-\frac{1}{\alpha y - \pi}\right)^{\alpha} \frac{1}{(y-\frac{\pi}{\alpha})^2} \, dy.
\end{equation}
By taking the average of \eqref{first_integral} and \eqref{second_integral}, we get
\begin{align*}
(T_f^{\lambda}1)(z) = \frac{\alpha+1}{2\alpha}\int_0^\infty e^{i\alpha x}\left[\left(1-\frac{1}{\alpha x}\right)^{\alpha} \frac{\chi_{[\frac{1}{\alpha},\infty)}(x)}{x^2} - \left(1-\frac{1}{\alpha x - \pi}\right)^{\alpha} \frac{\chi_{[\frac{1+\pi}{\alpha},\infty)}(x) }{(x-\frac{\pi}{\alpha})^2}\right] \, dx.
\end{align*}
The integrand on the right-hand side is uniformly bounded by
\[e^{-\frac{1}{x}}\frac{1}{x^2} + e^{-\frac{1}{x}}\min\left\{1,\frac{1}{(x-\pi)^2}\right\} \in L^1([0,\infty))\]
for $\alpha = \lambda-2 \geq 1$ and converges pointwise to $0$. Thus  $T_f^{\lambda}1 \to 0$ as $\lambda \rightarrow \infty$ by the dominated convergence theorem. 
\end{example}
%%%%%%%%%%%%%%%%%%%%%%%%%%%%%%%%%%%%%%%%%%%%%%%%%%%%%%%%%%%%%%%%%%%%%%%%%%%%%%%%%%%%%%%%%%%%%%%%%%
\section{Quantization and "VMO inside"}
\label{Section_VMO}
\setcounter{equation}{0}
%%%%%%%%%%%%%%%%%%%%%%%%%%%%%%%%%%%%%%%%%%%%%%%%%%%%%%%%%%%%%%%%%%%%%%%%%%%%%%%%%%%%%%%%%%%%%%%%%%%%
Let $\Omega=\mathbb{B}^n \subset \mathbb{C}^n$ denote the open Euclidean unit ball. In this case the genus is $p=n+1$ and the rank $r=1$. As usual, we put $\lambda = n+1+\alpha$, where $\alpha > -1$. With our previous notation and $\lambda > n=p-1$ the weighted measure on $\mathbb{B}^n$ is given by
\begin{equation*}
dv_{\lambda}(y) = c_{\lambda}h(y,y)^{\lambda-p} \, dv(y)= \frac{\Gamma(n+1+\alpha)}{n!\Gamma(\alpha+1)}(1-|y|^2)^{\alpha} dv(y).
\end{equation*}
\par 
With $\rho>0$ and $x \in \mathbb{B}^n$ consider the Bergman balls 
\begin{equation*}
E(x,\rho):= \big{\{} y \in \mathbb{B}^n \: : \: \beta(x,y) < \rho \big{\}} \hspace{2ex} \mbox{\it with volume} \hspace{2ex} |E(x,\rho)|= \int_{E(x,\rho)} 1 \, dv(y). 
\end{equation*}
For a locally integrable function $f$ on $\mathbb{B}^n$ and by using the notation in \cite{BCZ} we define the averaging function
\begin{equation*}
\hat{f}(x,\rho):= \frac{1}{|E(x,\rho)|} \int_{E(x,\rho)} f(y) dv(y). 
\end{equation*}
With $q \geq 1$ and $f \in L^q (\mathbb{B}^n)$ put now
\begin{equation*}
A_q(f,\rho,x):= \frac{1}{|E(x,\rho)|} \int_{E(x,\rho)} |f(y)-\hat{f}(x,\rho)|^q dv(y). 
\end{equation*}

\begin{definition}
With $q=2$ we define the space of bounded functions that have vanishing oscillation inside the unit ball (cf. \cite{Zhu}) 
\begin{equation}\label{defn_VMO_inside}
\textup{VMO}(\mathbb{B}^n):= \Big{\{} f \in L^{\infty}(\mathbb{B}^n) \: : \:  \lim_{\rho \rightarrow 0} A_2(f,\rho,x) =0 \; \textup{\it  uniformly for } \: x \in \mathbb{B}^n\Big{\}}. 
\end{equation}
Note that different from standard notations we assume functions in $\textup{VMO}(\mathbb{B}^n)$ to be bounded. 
\end{definition}
\begin{remark}
A standard estimate shows that 
\begin{equation*}
A_2(f,\rho,x) \leq \frac{1}{|E(x,\rho)|^2} \int_{E(x, \rho) \times E(x,\rho)} \big{|} f(y)-f(z)\big{|}^2dv(y,z). 
\end{equation*}
\end{remark}
Note that one has the proper inclusion $\textup{BUC}(\mathbb{B}^n) \subsetneq \textup{VMO}(\mathbb{B}^n)$. Here is an example of a function in $\textup{VMO}(\mathbb{D})$ that is not continuous:

\begin{example}
Let $\Omega= \mathbb{D}= \mathbb{B}^1$ and with $r \in (0,1)$ put 
$$f(r) := \log\Big{(}\log\Big{(}1 + \frac{1}{r}\Big{)}\Big{)}.$$
Consider $g = f \circ |\cdot| : \mathbb{D} \setminus \{0\} \to \mathbb{C}$. This function is clearly not continuous at $0$. Moreover, $f$ is convex and
\[|f'(r)| = \left|\frac{1}{\log(1 + \frac{1}{r})} \frac{1}{1 + \frac{1}{r}}\frac{1}{r^2}\right| \leq \frac{1}{r\log(1 + \frac{1}{r})}.\]
Thus
\begin{align*}
A_2(g,\rho,x) &\leq \frac{1}{|E(x,\rho)|^2} \int_{E(x,\rho)^2} \big{|} f(|y|)-f(|z|)|^2 \, dv(y,z)\\
&= \frac{2}{|E(x,\rho)|^2} \int\limits_{\substack{E(x,\rho)^2 \\ |y| \leq |z|}} \big{|}f(|y|)-f(|z|)\big{|}^2 \, dv(y,z)\\
&\leq \frac{2}{|E(x,\rho)|^2} \int\limits_{\substack{E(x,\rho)^2 \\ |y| \leq |z|}} \big{|}f'(|y|)|^2 \underbrace{(|y|-|z|)^2}_{\leq \frac{4}{\pi} |E(x,\rho)|} \, dv(y,z)\\
&\leq \frac{8}{\pi} \int_{E(x,\rho)} \frac{1}{(|y|\log(1 + \frac{1}{|y|}))^2} \, dv(y).
\end{align*}
Since $\frac{1}{(|y|\log(1 + \frac{1}{|y|}))^2}$ is integrable on $\D$, $A_2(g,\rho,x)$ tends uniformly to $0$ as $\rho \to 0$. Of course, $g$ is unbounded. To get an example of a bounded function, just consider $sin \circ g$.
\end{example}

With $\lambda > p-1$ and the involution $\varphi_x$ consider the mean oscillation in 
(\ref{Definition_rem_mean_oscillation}) again: 
\begin{equation*}
\textup{MO}^{\lambda}(f)(x)=
\int_{\mathbb{B}^n} \big{|}f \circ \varphi_x(y)- \mathcal{B}_{\lambda}(f)(x)\big{|}^2 dv_{\lambda}(y). 
\end{equation*}

\begin{lemma}\label{Lemma_estimate_mean_oscilation}
For all $x \in \mathbb{B}^n$ and all parameters $\rho > 0$, $\lambda > n$ we have
\begin{equation*}
\textup{MO}^{\lambda}(f)(x)\leq \int_{\mathbb{B}^n} \big{|} f \circ \varphi_x(y) - \hat{f}(x,\rho) \big{|}^2 dv_{\lambda}(y). 
\end{equation*} 
\end{lemma}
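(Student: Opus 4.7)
The plan is to exploit a classical variance-minimization principle: among all constants $c \in \mathbb{C}$, the $L^2$-distance $\int |u-c|^2 \, d\mu$ is minimized by the mean $c = \int u \, d\mu$ whenever $\mu$ is a probability measure. Since $dv_\lambda$ is normalized so that $v_\lambda(\mathbb{B}^n)=1$, the identity
\[
\mathcal{B}_\lambda(f)(x) = \int_{\mathbb{B}^n} f\circ\varphi_x(y)\, dv_\lambda(y)
\]
obtained from the convolution-type formula \eqref{Berezin_transform_second_form} expresses $\mathcal{B}_\lambda(f)(x)$ as precisely the $dv_\lambda$-mean of the function $y \mapsto f\circ\varphi_x(y)$.

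First I would rewrite the mean oscillation in the convenient integral form
\[
\textup{MO}^\lambda(f)(x) = \int_{\mathbb{B}^n} \bigl|\,f\circ\varphi_x(y) - \mathcal{B}_\lambda(f)(x)\,\bigr|^2 \, dv_\lambda(y),
\]
which is immediate from \eqref{Definition_rem_mean_oscillation} together with \eqref{Berezin_transform_second_form} (and the change of variables $w = \varphi_x(y)$, using $|k_x^\lambda(w)|^2 dv_\lambda(w) = dv_\lambda(\varphi_x(y))$, i.e.\ the standard invariance property of the weighted measure under the M\"obius involution).

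Next I would invoke the variance-minimization principle: for any constant $c\in\mathbb{C}$, expanding the square and using $\int (f\circ\varphi_x - \mathcal{B}_\lambda(f)(x))\, dv_\lambda = 0$ gives
\[
\int_{\mathbb{B}^n} |f\circ\varphi_x(y)-c|^2\, dv_\lambda(y) = \textup{MO}^\lambda(f)(x) + |\mathcal{B}_\lambda(f)(x)-c|^2 \geq \textup{MO}^\lambda(f)(x).
\]
Applying this with $c := \hat{f}(x,\rho)$ yields exactly the claimed inequality.

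There is no real obstacle here: the argument is essentially the observation that a sample mean minimizes mean squared deviation. The only small care needed is the verification that the Berezin transform, rewritten through the involution $\varphi_x$, is indeed the $dv_\lambda$-average of $f\circ\varphi_x$, which is already recorded in \eqref{Berezin_transform_second_form}. Once that is in hand, the lemma reduces to expanding a square and discarding a non-negative term.
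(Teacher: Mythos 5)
Your proof is correct and takes essentially the same approach as the paper: both reduce the lemma to the fact that $c = \mathcal{B}_\lambda(f)(x)$ minimizes $c \mapsto \int |f\circ\varphi_x - c|^2\, dv_\lambda$. The paper establishes the minimizer via a gradient computation, whereas you expand the square and drop a non-negative term, but these are trivially equivalent formulations of the same variance-minimization principle.
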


\begin{proof}
Let $x \in \mathbb{B}^n$ and fix the parameters $\rho > 0$ and $\lambda > p-1=n$. Consider the function $L: \mathbb{C} \rightarrow \mathbb{R}_+$ defined by 
\begin{equation*}
L(c):= \int_{\mathbb{B}^n} \big{|} f \circ \varphi_x(y) - c \big{|}^2 dv_{\lambda}(y).
\end{equation*}
Then the gradient $\textup{grad }L(c)$ vanishes precisely for 
\[c= \int_{\mathbb{B}^n}f \circ \varphi_x(y) dv_{\lambda}(y)= \mathcal{B}_{\lambda}(f)(x).\]
Since $L$ attains a minimum in the complex plane, the assertion follows. 
\end{proof}

\begin{theorem}\label{Theorem_VMO_inside_1}
Let $f \in  \textup{VMO}(\mathbb{B}^n)$, then $\lim_{\lambda \rightarrow \infty} \|f\|_{\textup{BMO}^{\lambda}}=0$. 
\end{theorem}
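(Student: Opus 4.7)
The plan is to apply Lemma~\ref{Lemma_estimate_mean_oscilation} with the constant $\hat{f}(x,\rho)$, for a radius $\rho > 0$ to be selected depending on $\lambda$, and then split the integration into the Bergman ball $E(0,\rho)$ around the origin and its complement:
\[ \textup{MO}^{\lambda}(f)(x) \leq I_1(x,\rho) + I_2(x,\rho), \]
where $I_1$ and $I_2$ denote the integrals of $|f\circ\varphi_x(y)-\hat{f}(x,\rho)|^2$ with respect to $dv_{\lambda}$ over $E(0,\rho)$ and over $\mathbb{B}^n\setminus E(0,\rho)$, respectively.

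For the exterior piece $I_2$, since $f$ and $\hat{f}(x,\rho)$ are bounded by $\|f\|_{\infty}$, I would use $I_2(x,\rho) \leq 4\|f\|_{\infty}^2\, v_{\lambda}(\mathbb{B}^n\setminus E(0,\rho))$. Combining $(1-|y|^2)^{\alpha} \leq e^{-\alpha|y|^2}$ with a polar substitution $t = \sqrt{\alpha}|y|$ and the Stirling asymptotic $c_{\lambda} \sim \alpha^n/n!$ produces $v_{\lambda}(\mathbb{B}^n\setminus E(0,\rho)) \leq C(\sqrt{\alpha}\rho)^{2n-2}e^{-\alpha\rho^2}$ whenever $\sqrt{\alpha}\rho$ is large.

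For the interior piece $I_1$ I would use the crude bound $(1-|y|^2)^{\alpha} \leq 1$ followed by the involution $w = \varphi_x(y)$, which maps $E(0,\rho)$ onto $E(x,\rho)$. For $\rho \leq 1/2$ the Jacobian factor satisfies $|D\varphi_x|(w) = ((1-|x|^2)/|1-\langle w,x\rangle|^2)^{n+1} \asymp (1-|x|^2)^{-(n+1)}$ on $E(x,\rho)$, and correspondingly $|E(x,\rho)| \asymp (1-|x|^2)^{n+1}|E(0,\rho)|$, with constants independent of $x$ and of such $\rho$. These comparisons yield
\[ I_1(x,\rho) \leq c_{\lambda}\int_{E(0,\rho)}\bigl|f\circ\varphi_x(y)-\hat{f}(x,\rho)\bigr|^2\,dv(y) \leq C\, c_{\lambda}\,|E(0,\rho)| \sup_{z\in\mathbb{B}^n}A_2(f,\rho,z). \]

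Setting $\rho = \sigma/\sqrt{\alpha}$ for a parameter $\sigma > 0$ and using $c_{\lambda}|E(0,\rho)| \asymp \sigma^{2n}$, the two estimates combine to give
\[ \textup{MO}^{\lambda}(f)(x) \leq C_1\sigma^{2n}\sup_{z\in\mathbb{B}^n}A_2\bigl(f,\sigma/\sqrt{\alpha},z\bigr) + C_2\|f\|_{\infty}^2\sigma^{2n-2}e^{-\sigma^2}. \]
Given $\varepsilon > 0$, I would first fix $\sigma$ large enough that the exterior contribution is below $\varepsilon/2$; with $\sigma$ fixed, the hypothesis $f \in \textup{VMO}(\mathbb{B}^n)$ sends the interior term to $0$ as $\lambda \to \infty$ (since then $\sigma/\sqrt{\alpha} \to 0$), so it also falls below $\varepsilon/2$ for all sufficiently large $\lambda$. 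Taking the supremum over $x$ then gives $\|f\|_{\textup{BMO}^{\lambda}} \to 0$. The main obstacle I anticipate is securing the uniform-in-$x$ constant in the interior estimate, which relies entirely on the Jacobian and volume comparisons on small Bergman balls; once that is in place, the two-scale limit (first $\sigma$, then $\lambda$) closes the argument.
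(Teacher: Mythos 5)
Your proof is correct and follows the same strategy as the paper: apply Lemma~\ref{Lemma_estimate_mean_oscilation}, split the integral into the Bergman ball $E(0,\rho)$ around the origin with $\rho=\sigma/\sqrt{\alpha}$ and its complement, use the $\textup{VMO}$ hypothesis on the inner piece, obtain Gaussian decay in $\sigma$ for the outer piece, and close with the two-scale limit (first $\sigma$ large, then $\lambda\to\infty$). The one place where you diverge is the interior estimate. The paper keeps the weighted measure on $E(x,\rho)$, applies Cauchy--Schwarz to split off $\int_{E(x,\rho)}|k_x^{\lambda}|^4(1-|y|^2)^{2(\lambda-p)}\,dv$, and arrives at the fourth-order average $A_4(f,\rho,x)$, then uses $\sqrt{A_4}\leq 2\|f\|_{\infty}\sqrt{A_2}$; the uniformity in $x$ is absorbed by the identity $|k_x^p\circ\varphi_x(y)|^2=|h(y,x)|^{2p}/h(x,x)^p$ together with $\|h\|_{\infty}<\infty$. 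You instead discard the weight via $(1-|y|^2)^{\alpha}\leq 1$, change variables, and use the Jacobian and volume comparisons $|D\varphi_x|(w)\asymp(1-|x|^2)^{-(n+1)}$ and $|E(x,\rho)|\asymp(1-|x|^2)^{n+1}|E(0,\rho)|$ on small Bergman balls, landing directly on $A_2$. Your route is a bit more elementary (no $A_4$, no Cauchy--Schwarz), at the cost of invoking the standard uniform two-sided comparison $|1-\langle w,x\rangle|\asymp 1-|x|^2$ for $\beta(x,w)\leq\rho\leq 1/2$ (\cite[Lemma~2.20]{Zhu_book}); since $\rho=\sigma/\sqrt{\alpha}\to 0$ for fixed $\sigma$, this applies for all large $\lambda$, so the argument closes. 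The exterior estimate and final limit are the same as in the paper.
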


\begin{proof}
Let $t >0$ be a parameter which we will specify later on, $\lambda \geq p$ and put $\rho(\lambda):=c_{\lambda}^{-\frac{1}{2n}}$. The estimate in Lemma \ref{Lemma_estimate_mean_oscilation} shows that:
\begin{align}
\textup{MO}^{\lambda}(f)(x)
&\leq \Big{\{} \int_{E(0, t\rho(\lambda))}+ \int_{\beta(0,y) \geq t\rho(\lambda)}\Big{\}} \big{|} f \circ \varphi_x(y) - \hat{f}(x,t\rho(\lambda)) \big{|}^2 dv_{\lambda}(y)\notag\\
&=: I_{1, \lambda,t}(x)+I_{2, \lambda,t}(x). \label{Seond_integral_in_MO}
\end{align}
\par 
First we estimate the integral $I_{1, \lambda,t}(x)$ using the transformation rule together with the Cauchy-Schwarz inequality: 
\begin{multline*}
I_{1,\lambda,t}(x) = \int_{E(x, t\rho(\lambda))} \big{|}k_x^{\lambda}(y)\big{|}^2 \big{|} f(y)- \hat{f}(x, t\rho(\lambda)) \big{|}^2 dv_{\lambda}(y) \\
\leq c_{\lambda} \left\{\int_{E(x, t\rho(\lambda))} |k_x^{\lambda}(y)\big{|}^4 (1-|y|^2)^{2(\lambda-p)} dv(y) \right\}^{\frac{1}{2}}
\left\{ \int_{E(x, t\rho(\lambda))}\big{|} f(y)- \hat{f}(x, t\rho(\lambda)) \big{|}^4 dv(y) \right\}^{\frac{1}{2}}. 
\end{multline*}
We calculate the first integral on the right:
\begin{align}
\int_{E(x, t\rho(\lambda))} |k_x^{\lambda}(y)\big{|}^4& (1-|y|^2)^{2(\lambda-p)} dv(y) =\notag\\
=&\frac{1}{c_{2\lambda-p}} \int_{E(x,t\rho(\lambda))} |k_x^{2\lambda-p}(y)|^2 |k_x^p(y)|^2 dv_{2\lambda-p}(y)\notag \\
=& \frac{1}{c_{2\lambda-p}} \int_{E(0, t\rho(\lambda))} \big{|}k_x^p \circ \varphi_x(y)\big{|}^2 dv_{2\lambda-p}(y)=(+).
\label{estimate_integral_4th_power}
\end{align}
According to Proposition 3 in \cite{BCZ}, it holds 
\begin{equation*}
\big{|}k_x^p\circ \varphi_x(y) \big{|}^2 = \frac{1}{|k_x^p(y)|^2} = \frac{|h(y,x)|^{2p}}{h(x,x)^p}. 
\end{equation*} 
Therefore 
\begin{align*}
(+)&= \frac{1}{c_{2\lambda-p}h(x,x)^p} \int_{E(0, t\rho(\lambda))} |h(y,x)|^{2p} \,  dv_{2\lambda-p}(y) \leq \|h\|_{\infty}^{2p} \frac{\big{|} E(0,t \rho(\lambda))\big{|}}{h(x,x)^p},
\end{align*}
where $\|h\|_{\infty} = \sup\limits_{x,y \in \mathbb{B}^n} |h(y,x)| < \infty$. According to Lemma 1.23 in \cite{Zhu_book}, the volume of the Bergman ball $E(z,t\rho(\lambda))$ with $z \in \mathbb{B}^n$ 
is given by 
\begin{align}\label{Estimate_Bergman_volume}
|E(z,t\rho(\lambda))|
&=\frac{\tanh(t\rho(\lambda))^{2n} (1-|z|^2)^{n+1}}{(1- \tanh(t\rho(\lambda))^{2} |z|^2)^{n+1}} \\
&\leq C(t\rho(\lambda))^{2n} (1-|z|^2)^{n+1} = \frac{t^{2n}C}{c_{\lambda}} h(z,z)^p.\notag 
\end{align}
Here $C>0$ is a suitable constant and the estimate holds for large $\lambda > n$. Inserting this estimate above with $z=0$ gives 
\begin{equation*}
(+)\leq \frac{\|h\|_{\infty}^{2p} t^{2n}C}{c_{\lambda}} \frac{1}{h(x,x)^p}. 
\end{equation*}
Hence  we have 
\begin{equation*}
I_{1, \lambda,t}(x)\leq \|h\|_{\infty}^pt^n \sqrt{Cc_{\lambda}}\frac{|E(x, t\rho(\lambda))|^{\frac{1}{2}}}{h(x,x)^{\frac{p}{2}}} \left\{ \frac{1}{|E(x, t\rho(\lambda))|} \int_{E(x, t\rho(\lambda))} |f(y)- \hat{f}(x, t\rho(\lambda))|^4 dv(y)\right\}^{\frac{1}{2}}. 
\end{equation*}
By using (\ref{Estimate_Bergman_volume}) again one obtains as $\lambda \rightarrow \infty$:  
\begin{equation*}
I_{1, \lambda,t}(x) \leq \|h\|_{\infty}^pt^{2n}C \sqrt{A_4 \big{(}f, t\rho(\lambda),x\big{)}} \leq 2\|h\|_{\infty}^p t^{2n} C \|f\|_{\infty} \sqrt{A_2(f,t \rho(\lambda),x)}\rightarrow 0, 
\end{equation*}
where by the assumption on $f$ the above limit is uniform on $\mathbb{B}^n$. 
\vspace{1ex} \par 
Now we estimate the second integral in (\ref{Seond_integral_in_MO}) which we have denoted $I_{2, \lambda,t}(x)$. By \cite[Corollary 1.22]{Zhu_book}, we have $\beta(0,y) \geq t\rho(\lambda)$ 
if and only if $|y| \geq \tanh(t\rho(\lambda)) =: R_{\lambda}$. A change of variables shows that
\begin{align*}
c_{\lambda}\int_{\beta(0,y) \geq t \rho(\lambda)} (1-|y|^2)^{\lambda-p} \, dv(y) 
&= 2nc_{\lambda}\int_{R_{\lambda}}^1 (1-r^2)^{\lambda-n-1}r^{2n-1} \, dr\\
&= 2n \frac{c_{\lambda}}{\lambda^n} t^{2n} \int_{R_{\lambda} \frac{\sqrt{\lambda}}{t}}^{\frac{\sqrt{\lambda}}{t}}\Big{(} 1- \frac{t^2 r^2}{\lambda} \Big{)}^{\lambda-n-1} r^{2n-1} dr= g(R_{\lambda}). 
\end{align*}
We have the following asymptotic behavior as $\lambda \rightarrow \infty$: 
\begin{equation*}
R_{\lambda}^2 = \tanh(t \rho(\lambda))^2 \sim \frac{t^2}{c_{\lambda}^{1/n}} \sim \frac{t^2}{\lambda} \hspace{4ex} \mbox{\it and} \hspace{4ex} c_{\lambda} \sim \lambda^n. 
\end{equation*}
\par 
Moreover, for all $\lambda >p-1$ and fixed $t>0$ the integrand is dominated by $c\exp (-t^2r^2)r^{2n-1}$ where $c>0$ is a suitable constant independent of $t$ and $r$. Therefore the 
dominated convergence theorem implies: 
\begin{equation*}
\lim_{\lambda \rightarrow \infty} g(R_{\lambda})= 2n \gamma_1 t^{2n} \int_{\gamma_2}^{\infty} e^{-t^2r^2} r^{2n-1} dr =:H(t). 
\end{equation*}
\par 
Since $H(t) \rightarrow 0$ as $t \rightarrow \infty$ we can choose $t>0$ sufficiently large such that for $\lambda >M_1$ and 
all $x \in \mathbb{B}^n$ we have
\begin{equation*}
I_{2,\lambda,t}(x) \leq 4 \|f\|^2_{\infty} g(R_{\lambda}) \leq 8 \|f\|^2_{\infty}H(t) < \varepsilon. 
\end{equation*}
With this fixed $t$ we can choose $M_2>M_1$ such that $I_{1,\lambda,t}(x)\leq \varepsilon$ for $\lambda > M_2$ and all $x \in \mathbb{B}^n$. From (\ref{Seond_integral_in_MO}) we find 
$\textup{MO}^{\lambda}(f)(x)<2 \varepsilon$ uniformly on $\mathbb{B}^n$. 
\end{proof}

\begin{proposition}\label{proposition_hilf_1}
Let $\Omega$ be a BSD and $f \in \textup{BMO}^{\lambda}(\Omega)$. Then it holds
\[\sup\limits_{z \in \Omega} \mathcal{B}_{\lambda}(|f - \mathcal{B}_{\lambda}(f)|)(z) \leq C\|f\|_{\textup{BMO}^{\lambda}}\]
for some constant $C > 0$ (independent of $\lambda \geq 2p$).
\end{proposition}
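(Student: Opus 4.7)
The plan is to split $|f - \mathcal{B}_{\lambda}(f)|$ via the triangle inequality, pulling in the anchor point $z$ where we evaluate the outer Berezin transform. For any $w \in \Omega$,
\[
|f(w) - \mathcal{B}_{\lambda}(f)(w)| \leq |f(w) - \mathcal{B}_{\lambda}(f)(z)| + |\mathcal{B}_{\lambda}(f)(z) - \mathcal{B}_{\lambda}(f)(w)|,
\]
so applying $\mathcal{B}_{\lambda}(\cdot)(z)$ to both sides leads to two terms. The first, after Jensen's inequality against the probability measure $|k_z^{\lambda}|^2 dv_{\lambda}$, is bounded by
\[
\sqrt{\mathcal{B}_{\lambda}\!\left(|f - \mathcal{B}_{\lambda}(f)(z)|^2\right)(z)} = \sqrt{\textup{MO}^{\lambda}(f)(z)} \leq \|f\|_{\textup{BMO}^{\lambda}}.
\]

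For the second term, Theorem \ref{Theorem_Estimate_BMO_BO} yields $|\mathcal{B}_{\lambda}(f)(z) - \mathcal{B}_{\lambda}(f)(w)| \leq 2\|f\|_{\textup{BMO}^{\lambda}}\beta_{\lambda}(z,w)$, reducing the task to proving a uniform bound
\[
\mathcal{B}_{\lambda}\!\left(\beta_{\lambda}(z, \cdot)\right)(z) \leq C'
\]
for $\lambda \geq 2p$, independently of $z \in \Omega$. The key reduction will use the standard Moebius invariance of the Berezin measure, $dv_{\lambda}(\varphi_z(u)) = |k_z^{\lambda}(u)|^2 dv_{\lambda}(u)$, together with the isometry $\beta_{\lambda}(z, \varphi_z(u)) = \beta_{\lambda}(0,u)$, to rewrite
\[
\int_{\Omega} \beta_{\lambda}(z,w) |k_z^{\lambda}(w)|^2 dv_{\lambda}(w) = \int_{\Omega} \beta_{\lambda}(0,u) dv_{\lambda}(u).
\]
This eliminates the $z$-dependence entirely, which is what allows the uniform bound.

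The main (and essentially only) technical ingredient is now Corollary \ref{Corollary_Schur_test_hilf_1} applied with $\rho = \tfrac{1}{4}$: from $\sqrt{\lambda}\beta(0,u) \leq Ch(u,u)^{-\lambda/4}$ we obtain $\beta_{\lambda}(0,u) = \sqrt{\lambda/p}\,\beta(0,u) \leq (C/\sqrt{p})h(u,u)^{-\lambda/4}$. Integrating against $dv_{\lambda} = c_{\lambda} h(u,u)^{\lambda-p} dv$ then gives
\[
\int_{\Omega} \beta_{\lambda}(0,u)\, dv_{\lambda}(u) \leq \frac{C}{\sqrt{p}} \, c_{\lambda} \int_{\Omega} h(u,u)^{3\lambda/4 - p} dv(u) = \frac{C}{\sqrt{p}} \cdot \frac{c_{\lambda}}{c_{3\lambda/4}},
\]
where the last identity uses $v_{\lambda}(\Omega) = 1$. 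For $\lambda \geq 2p$ we have $3\lambda/4 > p-1$, and the ratio $c_{\lambda}/c_{3\lambda/4}$ is uniformly bounded (this boundedness, used already in the proof of Proposition \ref{estimate_norm_Hankel_operator}, follows from the explicit form of $c_{\lambda}$ recorded in \cite{FK}).

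Combining the two pieces yields $\mathcal{B}_{\lambda}(|f - \mathcal{B}_{\lambda}(f)|)(z) \leq (1 + 2C')\|f\|_{\textup{BMO}^{\lambda}}$ uniformly in $z$. I do not anticipate a significant obstacle: the only non-routine step is recognizing that Corollary \ref{Corollary_Schur_test_hilf_1} is the correct tool to absorb the $\sqrt{\lambda}$-factor appearing in $\beta_{\lambda}$ against the concentration of $dv_{\lambda}$ near a point, which is precisely the mechanism already used in the proof of Proposition \ref{estimate_norm_Hankel_operator}.
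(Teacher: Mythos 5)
Your proof is correct and takes essentially the same route as the paper's: triangle inequality through the anchor $\mathcal{B}_{\lambda}(f)(z)$, Cauchy--Schwarz for the first piece, and Theorem \ref{Theorem_Estimate_BMO_BO} plus a Moebius change of variables plus Corollary \ref{Corollary_Schur_test_hilf_1} for the second. The only (harmless) cosmetic difference is your choice $\rho=\tfrac14$ in Corollary \ref{Corollary_Schur_test_hilf_1}, which produces the ratio $c_{\lambda}/c_{3\lambda/4}$ in place of the paper's $c_{\lambda}/c_{\lambda/2}$; both are uniformly bounded for $\lambda\geq 2p$.
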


\begin{proof}
By the triangular inequality, we have for all $z \in \Omega$: 
\begin{multline*}
\mathcal{B}_{\lambda}(|f - \mathcal{B}_{\lambda}(f)|)(z) = \int_{\Omega} |f(w) - \mathcal{B}_{\lambda}(f)(w)| |k_z^{\lambda}(w)|^2 \, dv_{\lambda}(w)\\
\leq \int_{\Omega} |f(w) - \mathcal{B}_{\lambda}(f)(z)||k_z^{\lambda}(w)|^2 \, dv_{\lambda}(w) 
 + \int_{\Omega} |\mathcal{B}_{\lambda}(f)(z) - \mathcal{B}_{\lambda}(f)(w)||k_z^{\lambda}(w)|^2 \, dv_{\lambda}(w).
\end{multline*}
According to the Cauchy-Schwarz inequality the first term is dominated by $\|f\|_{\textup{BMO}^{\lambda}}$. Hence it remains to estimate the second term. Using \cite[Theorem 4.9]{BC1} and 
Corollary \ref{Corollary_Schur_test_hilf_1}, we get
\begin{align*}
\int_{\Omega} |\mathcal{B}_{\lambda}(f)(z) - \mathcal{B}_{\lambda}(f)(w)|& |k_z^{\lambda}(w)|^2 \, dv_{\lambda}(w) 
\leq 2  \sqrt{\frac{\lambda}{p}}  \|f\|_{\textup{BMO}^{\lambda}} \int_{\Omega} \beta(z,w) |k_z^{\lambda}(w)|^2 \, dv_{\lambda}(w)\\
&=  2  \sqrt{\frac{\lambda}{p}}  \|f\|_{\textup{BMO}^{\lambda}}   \int_{\Omega} \beta(z,\varphi_z(w)) \, dv_{\lambda}(w)\\
&=  2  \sqrt{\frac{\lambda}{p}}  \|f\|_{\textup{BMO}^{\lambda}}   \int_{\Omega} \beta(0,w)  \, dv_{\lambda}(w)\\
&=  \frac{2C}{\sqrt{p}}\|f\|_{\textup{BMO}^{\lambda}} \int_{\Omega} h(w,w)^{-\frac{\lambda}{2}} \, dv_{\lambda}(w)\\
&= \frac{2C}{\sqrt{p}}\frac{c_{\lambda}}{c_{\frac{\lambda}{2}}}\|f\|_{\textup{BMO}^{\lambda}}. 
\end{align*}
\par 
Here $C>0$ is the constant in Corollary \ref{Corollary_Schur_test_hilf_1} which does not depend on $\lambda$. Since the quotient $\frac{c_{\lambda}}{c_{\frac{\lambda}{2}}}$ is bounded as a 
function of $\lambda$, the proposition follows.
\end{proof}
\begin{lemma}\label{Hilfslemma_Schur_test_2}
Let $\Omega$ be a BSD and $f \in L^{\infty}(\Omega)$. Then there is a constant $C > 0$ (independent of $\lambda \geq 2p$) such that
\begin{equation*}
 \|T^{\lambda}_{f}\|_{\lambda}\leq C \sqrt{\|f\|_{\infty} \| \mathcal{B}_{\frac{\lambda}{2}}(|f|)\|_{\infty}}, 
\end{equation*}
where we consider $T^{\lambda}_{f}=P_{\lambda}M_{f}$ as an operator acting on the whole space $L^2(\Omega, dv_{\lambda})$. 
\end{lemma}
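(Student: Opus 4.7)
The plan is to treat $T_f^\lambda = P_\lambda M_f$ as an integral operator on $L^2(\Omega, dv_\lambda)$ whose kernel against $dv_\lambda$ is $K(z,w) = f(w)\, h(z,w)^{-\lambda}$, and to apply the classical Schur test. The decisive design choice is the test function $\phi(w) := h(w,w)^{-\lambda/2}$, calibrated precisely so that the two Schur integrals reassemble into $\mathcal{B}_{\lambda/2}$-integrals evaluated at $|f|$ and at $1$, respectively.

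For the first (``column'') bound, I would unpack $dv_\lambda = c_\lambda h(w,w)^{\lambda-p}\,dv$ and regroup powers of $h(w,w)$; the integrand becomes, up to the factor $c_\lambda/c_{\lambda/2}$, a constant multiple of $|f(w)|\,|k_z^{\lambda/2}(w)|^2\,dv_{\lambda/2}(w)$, yielding
\[\int_\Omega |f(w)|\,|h(z,w)|^{-\lambda}\,\phi(w)\,dv_\lambda(w) \;=\; \frac{c_\lambda}{c_{\lambda/2}}\,\mathcal{B}_{\lambda/2}(|f|)(z)\,\phi(z) \;\leq\; \frac{c_\lambda}{c_{\lambda/2}}\,\|\mathcal{B}_{\lambda/2}(|f|)\|_\infty\,\phi(z).\]
For the dual (``row'') bound I would perform the same algebraic manipulation on the integral in $z$, but first pull out $|f(w)|\leq \|f\|_\infty$; the remaining integral collapses via the reproducing-kernel identity $\mathcal{B}_{\lambda/2}(1) \equiv 1$ to $\frac{c_\lambda}{c_{\lambda/2}}\,\phi(w)$. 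Schur's test then gives
\[\|T_f^\lambda\|_\lambda \;\leq\; \frac{c_\lambda}{c_{\lambda/2}}\sqrt{\|f\|_\infty\,\|\mathcal{B}_{\lambda/2}(|f|)\|_\infty}.\]

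To close the argument I would invoke the boundedness of the ratio $c_\lambda/c_{\lambda/2}$ as a function of $\lambda$ -- a fact already used in the proofs of Proposition \ref{estimate_norm_Hankel_operator} and Proposition \ref{proposition_hilf_1}, and which follows from the explicit formulas for $c_\lambda$ in \cite{FK}. The hypothesis $\lambda \geq 2p$ enters precisely here: it ensures $\lambda/2 > p-1$, so that $dv_{\lambda/2}$ and $\mathcal{B}_{\lambda/2}$ are well defined. I do not anticipate any genuine obstacle; the only substantive insight is the self-dualizing choice $\phi = h(\cdot,\cdot)^{-\lambda/2}$, dictated by the requirement that $|h(z,w)|^{-\lambda}\,\phi(w)\,dv_\lambda(w)$ match, up to scalars and the $h(z,z)^{\lambda/2}$ normalization, the measure $|k_z^{\lambda/2}(w)|^2\,dv_{\lambda/2}(w)$ underlying $\mathcal{B}_{\lambda/2}$.
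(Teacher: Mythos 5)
Your proof is correct and follows essentially the same route as the paper: a Schur test with the self-dual test function $h(\cdot,\cdot)^{-\lambda/2}$, reassembling the two Schur integrals into $\mathcal{B}_{\lambda/2}(|f|)$ and $\mathcal{B}_{\lambda/2}(1)=1$ respectively, followed by the boundedness of $c_\lambda/c_{\lambda/2}$. (The paper writes the kernel as $f(z)h(z,w)^{-\lambda}$ rather than your $f(w)h(z,w)^{-\lambda}$, i.e.\ it actually computes with $M_fP_\lambda$ and merely swaps the roles of the row and column estimates; since the two operators are adjoints up to conjugation of $f$, the resulting bound is identical.)
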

\begin{proof}
We use the Schur test and put $h_{\lambda}(z) := h(z,z)^{-\frac{\lambda}{2}}$, $C := \sup\limits_{\lambda \geq 2p} \frac{c_{\lambda}}{c_{\frac{\lambda}{2}}}$. Recall that the Toeplitz operator 
$T_{f}^{\lambda}$ has the integral kernel 
\[T_{\lambda,f}(z,w) := f(z) h(z,w)^{-\lambda}.\]
Hence we obtain
\begin{align*}
\int_{\Omega} \big{|} T_{\lambda,f}(z,w)\big{|} h_{\lambda}(z) dv_{\lambda}(z)
&=c_{\lambda} \int_{\Omega} |f(z)| |h(z,w)|^{-\lambda} h(z,z)^{\frac{\lambda}{2}-p} dv(z)\\
&= \frac{c_{\lambda}}{c_{\frac{\lambda}{2}}} \int_{\Omega} |f(z)| |h(z,w)|^{-\frac{\lambda}{2} \cdot 2} dv_{\frac{\lambda}{2}}(z)\\
&= \frac{c_{\lambda}}{c_{\frac{\lambda}{2}}} h_{\lambda}(w) \mathcal{B}_{\frac{\lambda}{2}}(|f|)(w)\\
&\leq C_1 h_{\lambda}(w)
\end{align*}
with $C_1 = C\|\mathcal{B}_{\frac{\lambda}{2}}(|f|)\|_{\infty}$.
%\par
Integration with respect to the parameter $w$ yields: 
\begin{align*}
\int_{\Omega} \big{|} T_{\lambda,f}(z,w)\big{|} h_{\lambda}(w) dv_{\lambda}(w)
&= c_{\lambda}|f(z)| \int_{\Omega} |h(z,w)|^{-\lambda} h(w,w)^{\frac{\lambda}{2}-p} \, dv(w)\\
&=\frac{c_{\lambda}}{c_{\frac{\lambda}{2}}}|f(z)| \int_{\Omega} |h(z,w)|^{-\frac{\lambda}{2} \cdot 2} \, dv_{\frac{\lambda}{2}}(w)\\
&=\frac{c_{\lambda}}{c_{\frac{\lambda}{2}}}  |f(z)| h(z,z)^{-\frac{\lambda}{2}}\\
&\leq C_2h_{\lambda}(z)
\end{align*}
with $C_2 = C\|f\|_{\infty}$. Hence $\|T^{\lambda}_f\|_{\lambda}$ is dominated by $\sqrt{C_1C_2}=C\sqrt{\|f\|_{\infty} \| \mathcal{B}_{\frac{\lambda}{2}}(|f|)\|_{\infty}}$. 
\end{proof}

\begin{corollary}\label{corollary_estimate_norm_Hankel_operator}
Let $\Omega$ be a BSD, $f \in L^{\infty}(\Omega)$ and $C>0$ as above. Then 
\begin{equation}\label{norm_estimate_Hankel_operator_GL}
\|H_{f}^{\lambda} \|_{\lambda} \leq C\sqrt{\|f\|_{\infty} \| \mathcal{B}_{\frac{\lambda}{2}}(|f|)\|_{\infty}}. 
\end{equation}
\end{corollary}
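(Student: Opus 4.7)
The plan is to deduce the stated bound directly from Lemma \ref{Hilfslemma_Schur_test_2} by relating the Hankel operator to a Toeplitz operator via passage to the adjoint. Lemma \ref{Hilfslemma_Schur_test_2} controls the operator norm of $T_g^{\lambda} = P_{\lambda} M_g$ acting on the full space $L^2(\Omega, dv_{\lambda})$, so the task reduces to realizing $H_f^{\lambda}$ (or its adjoint) as such an object.

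Concretely, I would extend $H_f^{\lambda}$ to an operator $\widetilde{H}_f^{\lambda} := (I - P_{\lambda}) M_f P_{\lambda}$ on all of $L^2(\Omega, dv_{\lambda})$. Since $P_{\lambda}$ is the orthogonal projection onto $\mathcal{A}_{\lambda}^2(\Omega)$, this extended operator vanishes on the orthogonal complement and agrees with $H_f^{\lambda}$ on the Bergman space, so $\|\widetilde{H}_f^{\lambda}\|_{\lambda} = \|H_f^{\lambda}\|_{\lambda}$. A direct computation using self-adjointness of $P_{\lambda}$ and $I - P_{\lambda}$ gives
\[
(\widetilde{H}_f^{\lambda})^{*} = P_{\lambda} M_{\overline{f}} (I - P_{\lambda}).
\]
Since $\|I - P_{\lambda}\|_{\lambda} \leq 1$ and $P_{\lambda} M_{\overline{f}}$ is precisely the operator $T_{\overline{f}}^{\lambda}$ viewed on $L^2(\Omega, dv_{\lambda})$, taking norms yields
\[
\|H_f^{\lambda}\|_{\lambda} = \|(\widetilde{H}_f^{\lambda})^{*}\|_{\lambda} \leq \|T_{\overline{f}}^{\lambda}\|_{\lambda}.
\]

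Finally, I would apply Lemma \ref{Hilfslemma_Schur_test_2} with $\overline{f}$ in place of $f$; since $\|\overline{f}\|_{\infty} = \|f\|_{\infty}$ and $|\overline{f}| = |f|$ (so $\mathcal{B}_{\frac{\lambda}{2}}(|\overline{f}|) = \mathcal{B}_{\frac{\lambda}{2}}(|f|)$), inequality \eqref{norm_estimate_Hankel_operator_GL} follows with the same constant $C$ that appears in the lemma. I do not anticipate any real obstacle: the whole argument is a short adjoint computation combined with the trivial remark that compressing a bounded operator by an orthogonal projection cannot increase its norm.
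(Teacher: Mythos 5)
Your argument is correct and is essentially the proof in the paper: both reduce the bound on $\|H_f^{\lambda}\|_{\lambda}$ to a bound on $\|P_{\lambda}M_{\overline{f}}\|_{\lambda}$ via an adjoint computation and the contractivity of the projection, and then invoke Lemma~\ref{Hilfslemma_Schur_test_2} with $\overline{f}$, using $\|\overline{f}\|_{\infty}=\|f\|_{\infty}$ and $|\overline{f}|=|f|$. The only cosmetic difference is that the paper bounds $\|H_f^{\lambda}\|_{\lambda}$ by the norm of $M_f:\mathcal{A}^2_{\lambda}(\Omega)\to L^2(\Omega,dv_{\lambda})$ directly, whereas you extend by $P_{\lambda}$ on the right before passing to the adjoint.
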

\begin{proof}
Note that the norm of the multiplication $M_f: \mathcal{A}_{\lambda}^2(\Omega) \rightarrow L^2(\Omega, dv_{\lambda})$ coincides 
with the norm of 
\begin{equation*}
M_fP_{\lambda}= \big{[}P_{\lambda} M_{\overline{f}}\big{]}^* \in \mathcal{L}(L^2(\Omega, dv_{\lambda})). 
\end{equation*}
Therefore one has
\begin{align}\label{Estimate_norm_Hankel_opeator}
\|H_{f}^{\lambda} \|_{\lambda} 
&\leq  \| M_f: \mathcal{A}^2_{\lambda}(\Omega) \rightarrow L^2(\Omega, dv_{\lambda}) \|_{\lambda}
 = \|P_{\lambda} M_{\overline{f}} \|_{\lambda}
\end{align}
and Lemma \ref{Hilfslemma_Schur_test_2} implies the estimate (\ref{norm_estimate_Hankel_operator_GL}). 
\end{proof}

\begin{theorem}\label{theorem_quantization_VMO}
Let $f \in \textup{VMO}(\mathbb{B}^n)$. Then $\lim_{\lambda \rightarrow \infty} \|H_f^{\lambda}\|_{\lambda}=0$ and, in particular, we obtain
\begin{equation*}
\lim_{\lambda \rightarrow \infty} \|T_f^{\lambda} T_g^{\lambda} -T_{fg}^{\lambda} \|_{\lambda}=0
\end{equation*}
for all $g \in L^{\infty}(\mathbb{B}^n)$ or all $g \in \textup{UC}(\mathbb{B}^n)$. 
\end{theorem}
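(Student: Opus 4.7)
The plan is to establish $\lim_{\lambda\to\infty}\|H_f^{\lambda}\|_{\lambda}=0$; the semi-commutator statement then follows at once from the estimate \eqref{Semi_commutator_estimate}, since $\bar f\in\textup{VMO}(\mathbb{B}^n)$ whenever $f$ is, the factor $\|H_g^{\lambda}\|_{\lambda}\leq\|g\|_{\infty}$ is uniformly bounded for $g\in L^{\infty}(\mathbb{B}^n)$, and $\|H_g^{\lambda}\|_{\lambda}\to 0$ for $g\in\textup{UC}(\mathbb{B}^n)$ by Theorem \ref{Theorem_behaviour_semi_commutator_uniformly_bounded_function}.

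To treat the main claim I would split the Hankel operator around the Berezin transform at the \emph{half} weight,
\[H_f^{\lambda}\;=\;H_{f-\mathcal{B}_{\lambda/2}(f)}^{\lambda}\;+\;H_{\mathcal{B}_{\lambda/2}(f)}^{\lambda},\]
and attack the two summands with different tools. For the smoothed piece I would use Proposition \ref{estimate_norm_Hankel_operator} (applicable for $\lambda>4p$) to obtain $\|H_{\mathcal{B}_{\lambda/2}(f)}^{\lambda}\|_{\lambda}\leq C\|\mathcal{B}_{\lambda/2}(f)\|_{\textup{BO}^{\lambda}}$; the monotonicity $\|\cdot\|_{\textup{BO}^{\lambda}}\leq\|\cdot\|_{\textup{BO}^{\lambda/2}}$ followed by Theorem \ref{Theorem_Estimate_BMO_BO} then gives $\|\mathcal{B}_{\lambda/2}(f)\|_{\textup{BO}^{\lambda}}\leq 2\|f\|_{\textup{BMO}^{\lambda/2}}$, which tends to zero by Theorem \ref{Theorem_VMO_inside_1}. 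For the rough piece $g_{\lambda}:=f-\mathcal{B}_{\lambda/2}(f)\in L^{\infty}(\mathbb{B}^n)$ I would apply the Schur-type bound of Corollary \ref{corollary_estimate_norm_Hankel_operator},
\[\|H_{g_{\lambda}}^{\lambda}\|_{\lambda}\;\leq\;C\sqrt{\|g_{\lambda}\|_{\infty}\cdot\|\mathcal{B}_{\lambda/2}(|g_{\lambda}|)\|_{\infty}},\]
controlling the first factor by $2\|f\|_{\infty}$ and recognising the second factor as precisely the quantity estimated in Proposition \ref{proposition_hilf_1} at weight $\lambda/2$, namely $\|\mathcal{B}_{\lambda/2}(|f-\mathcal{B}_{\lambda/2}(f)|)\|_{\infty}\leq C'\|f\|_{\textup{BMO}^{\lambda/2}}\to 0$. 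Taking square roots shows the rough piece also vanishes in the limit, and the theorem is proved.

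The main obstacle, and the reason to centre the decomposition at $\mathcal{B}_{\lambda/2}(f)$ rather than at $\mathcal{B}_{\lambda}(f)$, is to match the weight $\lambda/2$ that appears naturally in Corollary \ref{corollary_estimate_norm_Hankel_operator} with the weight in Proposition \ref{proposition_hilf_1}, so that the right-hand side of the Schur bound collapses cleanly to $\|f\|_{\textup{BMO}^{\lambda/2}}$. Any other centring would leave residual terms of the form $\|\mathcal{B}_{\mu}(f)-\mathcal{B}_{\nu}(f)\|_{\infty}$, whose smallness is not available for a generic $\textup{VMO}$ symbol; unlike in the $\textup{UC}$ case treated by Proposition \ref{Lemma_convergence_Berezin_transform_UC_functions}, one cannot assert that $\mathcal{B}_{\lambda}(f)\to f$ uniformly on $\mathbb{B}^n$.
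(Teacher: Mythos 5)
Your proposal is correct and follows essentially the same route as the paper: the same decomposition $H_f^{\lambda}=H_{f-\mathcal{B}_{\lambda/2}(f)}^{\lambda}+H_{\mathcal{B}_{\lambda/2}(f)}^{\lambda}$, with Corollary \ref{corollary_estimate_norm_Hankel_operator} and Proposition \ref{proposition_hilf_1} handling the rough piece, Proposition \ref{estimate_norm_Hankel_operator} and Theorem \ref{Theorem_Estimate_BMO_BO} handling the smoothed piece, and Theorem \ref{Theorem_VMO_inside_1} closing both. The only cosmetic difference is that to pass from $\|\mathcal{B}_{\lambda/2}(f)\|_{\textup{BO}^{\lambda}}$ to $\|f\|_{\textup{BMO}^{\lambda/2}}$ you invoke the monotonicity $\|\cdot\|_{\textup{BO}^{\lambda}}\le\|\cdot\|_{\textup{BO}^{\lambda/2}}$ (losing constant $2$), whereas the paper uses the exact scaling $\sqrt{2}\,\beta_{\lambda/2}=\beta_{\lambda}$ (constant $\sqrt{2}$); this is immaterial for the limit.
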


\begin{proof}
Because of (\ref{Semi_commutator_estimate}) and Theorem \ref{Theorem_behaviour_semi_commutator_uniformly_bounded_function} (in the case where  $g \in \textup{UC}(\mathbb{B}^n)$) 
it is sufficient to check that for all $f \in \textup{VMO}(\mathbb{B}^n)$: 
\begin{equation*}
\lim_{\lambda \rightarrow \infty} \| H_f^{\lambda}\|_{\lambda}=0. 
\end{equation*}
According to Corollary \ref{corollary_estimate_norm_Hankel_operator}, we have:
\begin{align}
\|H_f^{\lambda} \|_{\lambda} \label{Decomposition_Hankel_VMO}
 &\leq \big{\|} H_{f-\mathcal{B}_{\frac{\lambda}{2}}(f)}^{\lambda}\big{\|}_{\lambda} + \| H_{\mathcal{B}_{\frac{\lambda}{2}}(f)}^{\lambda} \big{\|}_{\lambda} \\
 &\leq C\sqrt{2 \|f\|_{\infty} \|\mathcal{B}_{\frac{\lambda}{2}}(|f -\mathcal{B}_{\frac{\lambda}{2}}(f)|) \|_{\infty}}+ \| H_{\mathcal{B}_{\frac{\lambda}{2}}(f)}^{\lambda} \big{\|}_{\lambda}. \notag
\end{align}
The first term on the right tends to zero as $\lambda \rightarrow \infty$ by the Proposition \ref{proposition_hilf_1} and Theorem \ref{Theorem_VMO_inside_1}.  
Note that according to (\ref{weighted_Bergman_metric}) we have $\sqrt{2} \beta_{\frac{\lambda}{2}}(z,w)= \beta_{\lambda}(z,w)$ and Theorem \ref{Theorem_Estimate_BMO_BO} implies that 
\begin{equation*}
\| \mathcal{B}_{\frac{\lambda}{2}}(f)\|_{\textup{BO}^{\lambda}} \leq \sqrt{2} \|f\|_{\textup{BMO}^{\frac{\lambda}{2}}}. 
\end{equation*}
The estimate (\ref{inequality_operator_norm_weighted_Hankel_operator}) implies that there is $C>0$ independent of $\lambda$ and $f$ with 
\begin{equation*}
 \| H_{\mathcal{B}_{\frac{\lambda}{2}}(f)}^{\lambda} \big{\|}_{\lambda}\leq C \|\mathcal{B}_{\frac{\lambda}{2}}(f) \| _{\textup{BO}^{\lambda}} \leq \sqrt{2} C \|f\|_{\textup{BMO}^{\frac{\lambda}{2}}}. 
\end{equation*}
\par 
Since $f \in \textup{VMO}(\mathbb{B}^n)$ it follows from Theorem \ref{Theorem_VMO_inside_1} that the right hand side tends to zero as $\lambda \rightarrow \infty$. Hence (\ref{Decomposition_Hankel_VMO}) proves the assertion. 
\end{proof}
%\begin{corollary}
%Let $f \in  \textup{VMO}(\mathbb{B}^n)$, then $\lim_{\lambda \rightarrow \infty} \|T_f^{\lambda}\|_{\lambda}= \|f\|_{\infty}$. 
%\end{corollary}
%\begin{proof}
%Follows from Theorem \ref{theorem_quantization_VMO} and Lemma \ref{lem_norm_convergence_TO}. 
%\end{proof} 
We add an observation on the asymptotic behavior of semi-commutators of Toeplitz operator and a relation to a compactness result in \cite{BBCZ}. With $n \in \mathbb{N}$ consider the standard monomial  orthonormal basis of 
$\mathcal{A}_{\lambda}^2(\mathbb{B}^{n+1})$ 
$$\mathcal{B}=\big{\{} e_{\alpha}^{\lambda}(z):=z^{\alpha} \|z^{\alpha}\|_{\lambda}^{-1} \: : \: \alpha \in \mathbb{Z}_+^{n+1}\big{\}}.$$ 
We split the coordinates $z \in \mathbb{B}^{n+1}$ and multi-indices $\alpha \in \mathbb{Z}_+^{n+1}$ into two parts: 
\begin{equation*}
z=(z^{\prime}, z^{\prime \prime}) \in \mathbb{C} \times \mathbb{C}^n \hspace{3ex} \mbox{\it and} \hspace{3ex} \alpha = (\alpha^{\prime}, \alpha^{\prime \prime}) \in \mathbb{Z}_+\times \mathbb{Z}_+^n.
\end{equation*}
A direct calculation (cf. \cite{BV}  in the case of $n=1$) shows that $e_{\alpha}^0(z)=e_{\alpha^{\prime}}^n(z^{\prime}) e_{\alpha^{\prime \prime}}^{|\alpha^{\prime}|+ 1}(z^{\prime \prime})$. This 
induces an orthogonal decomposition of the unweighted Bergman space 
\begin{equation}\label{unweighted_Bergman_space_orthogonal_decomposition}
\mathcal{A}^2_0(\mathbb{B}^{n+1})= \bigoplus_{j \in \mathbb{Z}_+} \underbrace{ \textup{span} \big{\{} e_{\alpha^{\prime}}^n(z^{\prime}) \: : \: \alpha^{\prime}=j\big{\}} \otimes \mathcal{A}_{j+1}^2(\mathbb{B}^n)}_{=:H_j}. 
\end{equation}
Let  $c,d \in L^{\infty}(\mathbb{B}^n)$  and extend $c$ to the ball $\mathbb{B}^{n+1}$ by $f_c(z):= c(z^{\prime \prime})$. The Toeplitz operator $T^0_{f_c}$ and the semi-commutator 
$T^0_{f_c}T^0_{f_d}-T^0_{f_cf_d}$ act on 
(\ref{unweighted_Bergman_space_orthogonal_decomposition}) as follows: 
\begin{align}
T_{f_c}^0
&= \bigoplus_{j=0}^{\infty} I \otimes T_c^{j+1}\notag \\
T^0_{f_c}T^0_{f_d}-T^0_{f_cf_d}
&=\bigoplus_{j=0}^{\infty} I \otimes \big{(} T^{j+1}_cT^{j+1}_d-T^{j+1}_{cd}\big{)}.\label{orthogonal_decomposition_semi_commutator}
\end{align} 
\par 
Assume that $f_c$ or $f_d$ are functions in $\textup{VMO}_{\partial}(\mathbb{B}^{n+1})$. Then it follows from the results in \cite{BBCZ} that the semi-commutator $T^0_{f_c}T^0_{f_d}-T^0_{f_cf_d}$ 
is compact on $\mathcal{A}_0^2(\mathbb{B}^{n+1})$. The decomposition (\ref{orthogonal_decomposition_semi_commutator}) and standard arguments imply that 
\begin{equation*}
T_c^{j+1}T_d^{j+1}-T_{cd}^{j+1} \in \mathcal{K}(\mathcal{A}_{j+1}^2(\mathbb{B}^n)) \hspace{3ex} \mbox{\it and} \hspace{3ex} \lim_{j \rightarrow \infty} \|T_c^{j+1}T_d^{j+1}-T_{cd}^{j+1}\|_j=0.
\end{equation*}
Introduce the $C^*$-algebra
\begin{equation*} 
\textup{VMO}^{\dag}(\mathbb{B}^n) := \Big{\{} c \in L^{\infty}(\mathbb{B}^n) \: : \: f_c \in \textup{VMO}_{\partial}(\mathbb{B}^{n+1}) \Big{\}}. 
\end{equation*}

Observe that if $z$ tends to $z_0 = (z'_0,z''_0) \in \partial \mathbb{B}^{n}$, with any $z'_0 \neq 0$, then the values of the function $f_c$ near the boundary point $z_0$ coincide with the values of the 
function $c$ in a neighborhood of the point $z''_0 \in \mathbb{B}^n$. Hence a ''vanishing oscillation condition'' of the function $f_c$ near the boundary may be interpreted as a ''vanishing oscillation condition'' 
of the function $c$ inside $\mathbb{B}^n$. This somewhat vague comment suggests the following 
\vspace{1mm}\\
{\sf conjecture:} $\textup{VMO}^{\dag}(\mathbb{B}^n) = \textup{VMO}(\mathbb{B}^n)$.

%%%%%%%%%%%%%%%%%%%%%%%%%%%%%%%%%%%%%%%%%%%%%%%%%%%%%%%%%%%%%%%%%%%%%%%%%%%%%%%%%%%%%%%%%%%%%%%%%%%

\end{document}